\documentclass{amsart}
\usepackage{omri_commands}
\usepackage{nicematrix}
\usepackage{pgf,tikz}
\usepackage{mathrsfs}
\usetikzlibrary{arrows}
\definecolor{uuuuuu}{rgb}{0.26666666666666666,0.26666666666666666,0.26666666666666666}
\definecolor{black}{rgb}{0.6,0.2,0.}
\definecolor{xdxdff}{rgb}{0.49019607843137253,0.49019607843137253,1.}




\title{Orbits in Teichm{\"u}ller dynamics admits a critical exponent gap}
\author{Omri Nisan Solan}
\thanks{This research was supported by ERC 2020 grant HomDyn (grant no.~833423).}
\date{\today}
\usepackage{todonotes}
\begin{document}
\begin{abstract}
  McMullen '03 constructs a collection of orbits $\SL_2(\RR).x$ in $\cH(1,1)$ with infinitely generated stabilizers $\stab_{\SL_2(\RR)}(x)$.
  We prove a gap in the set of critical exponents of stabilizers of $\SL_2(\RR)$-orbits in $\cH_g$: for every $x\in \cH_g$, either $\stab_{\SL_2(\RR)}(x)$ is a lattice, or we have a uniform bound on the critical exponent $\delta(\stab_{\SL_2(\RR)}(x)) \le 1-\varepsilon_g$.
\end{abstract}

\maketitle
\section{Introduction}
Let $\cH_g$ denote the space of flat surfaces with genus $G$ and volume $1$. This space has an $\SL_2(\RR)$ action.
This space and action are related to multiple areas in mathematics. On the one hand, every element $x\in \cH_g$ represents a complex curve,
relating the flat surface dynamics to algebraic geometry and number theoretic properties of subvarieties of \teich\ spaces.
On the other hand, the dynamics are related to interval exchange transformations and billiard dynamics.

Because of these relations, the dynamics of $\SL_2(\RR)\acts \cH_g$ where studied from different perspectives. A long line of research aims to study invariant subsets of $\cH_g$.
In Eskin Mirzakhani \cite{eskin2018invariant}, the authors prove a general measure classification result, showing that every $\begin{pmatrix}
  *&*\\ 0&*
\end{pmatrix}$-invariant probability measure is $\SL_2(\RR)$ invariant and is the Lebesgue measure on a certain kind of manifold called \emph{affine manifolds}. Using the measure classification result, Eskin Mirzakhani and Mohammadi \cite{eskin2015isolation}, showed that every invariant irreducible closed subset is an affine manifold.
Classification of invariant subsets was established for $g=2$ by McMullen \cite{mcmullen2003genus2dynamics}, but not for higher genuses.
There has been extensive study on the possible affine submanifolds of $\cH_g$, particularly the potential closed orbits.
However, in \cite{mcmullen2003teichmuller}, McMullen describes a collection of non-closed orbits with infinite complexity. See Calta \cite{calta2004veech} for similar results in this direction.
\begin{figure}
  \begin{center}
    \begin{tikzpicture}[line cap=round,line join=round,>=triangle 45,x=3.0cm,y=3.0cm]
      \clip(-1.3094792742422154,-0.05) rectangle (2.7029251339713363,1.9002304146042626);
      \fill[color=black,fill=black,fill opacity=0.1] (0.,0.) -- (0.,1.) -- (-1.,1.) -- (-1.,0.) -- cycle;
      \fill[color=black,fill=black,fill opacity=0.1] (0.,1.676073743754346) -- (0.,0.) -- (1.6760737437543458,0.) -- (1.676073743754346,1.6760737437543458) -- cycle;
      \fill[color=black,fill=black,fill opacity=0.1] (1.676073743754346,1.6760737437543458) -- (1.6760737437543458,1.) -- (2.352147487508691,1.) -- (2.3521474875086916,1.6760737437543456) -- cycle;
      \draw [color=black] (0.,0.)-- (0.,1.);
      \draw [color=black] (0.,1.)-- (-1.,1.);
      \draw [color=black] (-1.,1.)-- (-1.,0.);
      \draw [color=black] (-1.,0.)-- (0.,0.);
      \draw [color=black] (0.,1.676073743754346)-- (0.,0.);
      \draw [color=black] (0.,0.)-- (1.6760737437543458,0.);
      \draw [color=black] (1.6760737437543458,0.)-- (1.676073743754346,1.6760737437543458);
      \draw [color=black] (1.676073743754346,1.6760737437543458)-- (0.,1.676073743754346);
      \draw [color=black] (1.676073743754346,1.6760737437543458)-- (1.6760737437543458,1.);
      \draw [color=black] (1.6760737437543458,1.)-- (2.352147487508691,1.);
      \draw [color=black] (2.352147487508691,1.)-- (2.3521474875086916,1.6760737437543456);
      \draw [color=black] (2.3521474875086916,1.6760737437543456)-- (1.676073743754346,1.6760737437543458);
      \begin{scriptsize}
      \draw[color=black] (-0.5025152591825067,0.5300727640341384) node {\scalebox{3}{$1$}};
      \draw[color=black] (0.8480286826882558,0.8326842696815278) node {\scalebox{3}{$1+a$}};
      \draw[color=black] (2.041662954964075,1.3538485294075873) node {\scalebox{3}{$a$}};
      \end{scriptsize}
    \end{tikzpicture}
    \caption{The surface $S(a)$.}
    \label{fig: Sa}
  \end{center}
\end{figure}
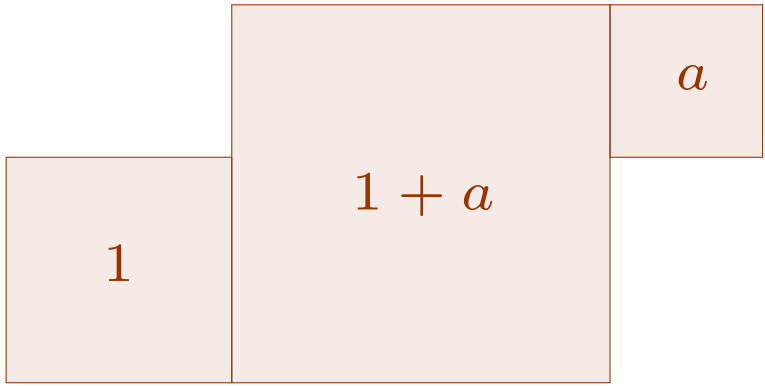
\begin{theorem}[McMullen \cite{mcmullen2003teichmuller}]\label{thm: McMullen}
  Let $b$ be a rational number such that $a = b-1 + \sqrt{b^2 - b + 1}$ is irrational, and let $(S(a),\bd z)\in \cH(1,1)$ be the flat surface defined by the polygon in Figure \ref{fig: Sa}, where we glue opposite edges. 
  Then $\stab_{\SL_2(\RR)}((S(a),\bd z))$ is infinitely generated. 
\end{theorem}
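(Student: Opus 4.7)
The plan is to exploit the fact that $a$ lies in the real quadratic field $K=\mathbb{Q}(\sqrt{b^2-b+1})$, so that all absolute and relative periods of $(S(a),\bd z)$ lie in the order $\mathcal{O}=\mathbb{Z}[a]$. I would first verify by direct computation from the polygon of Figure \ref{fig: Sa} that $\operatorname{Jac}(S(a))$ admits real multiplication by $\mathcal{O}$, and that $\bd z$ is an eigenform for this action. Via the two real embeddings $\iota_1,\iota_2\colon K\to\RR$, this yields an embedding $\SL_2(\mathcal{O})\hookrightarrow\SL_2(\RR)\times\SL_2(\RR)$, where the first factor governs the derivative of any affine symmetry of $(S(a),\bd z)$.

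Next I would show that every $g\in\stab_{\SL_2(\RR)}((S(a),\bd z))$ commutes with the real multiplication, and is therefore of the form $\iota_1(\gamma)$ for some $\gamma\in\SL_2(\mathcal{O})$. Conversely, each such $\gamma$ induces a $\bd z$-preserving automorphism of $\operatorname{Jac}(S(a))$, and by Torelli an automorphism of the underlying Riemann surface $S(a)$ preserving $[\bd z]$, hence an affine map of $(S(a),\bd z)$. The remaining question is which of these affine maps respect the \emph{labelling} of the two zeros, which is part of the data of a point in $\cH(1,1)$.

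The obstruction comes from the relative period. The two simple zeros of $\bd z$ are joined by a saddle connection whose integral $w\in\mathbb{C}$ projects to a point $[w]$ in the elliptic curve $E$ cut out by the Galois conjugate embedding $\iota_2$. An affine map induced by $\gamma$ permutes the zeros, and fixes the marked pair iff $\iota_2(\gamma)\cdot[w]=[w]$ in $E$. Thus the stabilizer is identified with
\[
\{\gamma\in\SL_2(\mathcal{O}):\iota_2(\gamma)\cdot[w]=[w]\text{ in }E\}.
\]

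The main obstacle, and the crux of the theorem, is proving this subgroup is infinitely generated. Since $a$ is irrational, $[w]$ is non-torsion, so the stabilizer has infinite index in $\SL_2(\mathcal{O})$ and is a thin Fuchsian subgroup. To exhibit infinitely many generators I would produce, for each of the countably many completely periodic directions of $S(a)$ with commensurable cylinder moduli, an affine Dehn twist giving a parabolic element of the stabilizer. The delicate step is to show these parabolics realize infinitely many distinct cusps of the quotient $\mathbb{H}/\stab_{\SL_2(\RR)}((S(a),\bd z))$; this would follow from the density of the $\iota_2$-images of periodic directions on $\partial\mathbb{H}$ combined with the non-torsion nature of $[w]$, preventing a finite subset of twists from absorbing the remaining cusps via relations in $\SL_2(\mathcal{O})$.
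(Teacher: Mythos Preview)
The paper does not contain a proof of this theorem; it is merely cited from McMullen \cite{mcmullen2003teichmuller} as motivation for the main result. There is therefore no proof in the paper to compare your proposal against.

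That said, your outline has a genuine error in the converse step. Torelli's theorem does \emph{not} say that every automorphism of $\operatorname{Jac}(S(a))$ preserving the polarization comes from an automorphism of the curve $S(a)$; it only says that the curve is determined by its polarized Jacobian. In general an element $\gamma\in\SL_2(\mathcal{O})$ acting on the Jacobian will not descend to an affine self-map of the flat surface, so your identification of the stabilizer with $\{\gamma\in\SL_2(\mathcal{O}):\iota_2(\gamma)\cdot[w]=[w]\}$ is not justified as stated. McMullen does not proceed this way: he never claims the Veech group equals such a stabilizer, only that it is contained in $\SL_2(K)$, and he produces the parabolic elements directly from the geometry of the surface rather than by descent from the Jacobian.

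Your final paragraph correctly identifies where the real content lies, but the sketch you give (``density of the $\iota_2$-images of periodic directions'' preventing finitely many twists from absorbing the rest) is not an argument. McMullen's actual proof that there are infinitely many cusps is considerably more delicate: it combines the classification of periodic directions for eigenforms with the fact that the $\SL_2(\RR)$-orbit is not closed (which itself uses that $a$ is irrational and the orbit-closure theorem for the eigenform locus) to conclude that the limit set is $\partial\mathbb{H}$ while the group is not a lattice, forcing the group of the first kind to be infinitely generated. Your proposal does not supply this mechanism.
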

Since then these infinite complexity orbits remained mysterious and there were no theoretical results about their nature. 
The main goal of the paper is to prove the following.
\begin{theorem}\label{thm: main}
  Let $g>0$. There exists an $\varepsilon_g$ such that for each $x\in \cH_g$, 
  either $\stab_{\SL_2(\RR)}(x)$ is a lattice in $\SL_2(\RR)$, or
  we have a bound on the critical exponent (see Definition \ref{def: critical exponent})
  \[\delta(\stab_{\SL_2(\RR)}(x)) \le 1-\varepsilon_g. \]
\end{theorem}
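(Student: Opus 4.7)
The plan is to bound the critical exponent of $\Gamma := \stab_{\SL_2(\RR)}(x)$ via the Hausdorff dimension of its radial limit set, extracting the uniform gap from the affine rigidity of the orbit closure. By the isolation theorem \cite{eskin2015isolation}, $M := \overline{\SL_2(\RR).x}$ is an affine invariant submanifold of the stratum containing $x$. If $\SL_2(\RR).x = M$, then the orbit is itself an affine invariant submanifold of dimension $3$; pulling back its finite $\SL_2(\RR)$-invariant affine measure along the orbit map yields a finite Haar measure on $\SL_2(\RR)/\Gamma$, so $\Gamma$ is a lattice. Assume henceforth $\dim M \ge 4$; the task becomes proving $\delta(\Gamma) \le 1 - \varepsilon_g$.

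By Bishop--Jones, $\delta(\Gamma) = \dim_H \Lambda^{\mathrm{rad}}(\Gamma)$, where $\Lambda^{\mathrm{rad}}(\Gamma) \subset \partial \mathbb{H}^2$ is the radial limit set. Under the identification $\SL_2(\RR)/\Gamma \cong \SL_2(\RR).x$, a direction $\xi$ lies in $\Lambda^{\mathrm{rad}}(\Gamma)$ precisely when the Teichm\"uller geodesic ray from $x$ in direction $\xi$ returns infinitely often to a fixed small neighborhood $U$ of $x$ \emph{inside the orbit}. Since $\dim M \ge 4$, the piece $U \cap \SL_2(\RR).x$ has positive codimension in $U \subset M$. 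Applying the measure classification \cite{eskin2018invariant} and ergodicity of the $\SL_2(\RR)$-invariant affine measure on $M$ under the Teichm\"uller geodesic flow, one sees that for Lebesgue-a.e.\ $\xi$ the ray equidistributes to the affine measure on $M$ and hence spends zero density time on $U \cap \SL_2(\RR).x$. Therefore $\Lambda^{\mathrm{rad}}(\Gamma)$ is Lebesgue-null, yielding qualitatively $\delta(\Gamma) < 1$.

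To upgrade to the quantitative bound $\delta(\Gamma) \le 1 - \varepsilon_g$, I would couple an effective equidistribution for Teichm\"uller geodesics on $M$ --- drawing on the quantitative side of \cite{eskin2018invariant} together with Eskin--Masur non-divergence of orbits --- with a Frostman covering argument to bound $\dim_H \Lambda^{\mathrm{rad}}(\Gamma)$ from above. Uniformity in $g$ would then follow from the Eskin--Filip--Wright finiteness of affine invariant submanifolds of each stratum of $\cH_g$: only finitely many transverse configurations $M \supsetneq \SL_2(\RR).x$ can arise up to isomorphism, giving a finite list of excursion rates and hence a single $\varepsilon_g > 0$ governing all of them.

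The main obstacle is precisely this quantitative step. The soft ergodic-theoretic argument only produces $\delta(\Gamma) < 1$; extracting a uniform Hausdorff-dimension gap requires precise control of both the affine holonomy of $M$ near the orbit and the rate of non-divergence of Teichm\"uller geodesics, uniformly over all $x \in \cH_g$. Adapting the effective techniques of Eskin--Mirzakhani--Mohammadi, originally developed for counting closed orbits rather than for dimension estimates, so as to produce a Frostman-type upper bound along every $\SL_2(\RR)$-orbit is the delicate heart of the argument.
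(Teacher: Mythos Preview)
Your argument has a genuine gap already at the qualitative step, before any effectivization. From equidistribution of the Teichm\"uller ray in $M$ you deduce ``zero density time in $U\cap \SL_2(\RR).x$'' and then ``$\Lambda^{\mathrm{rad}}(\Gamma)$ is Lebesgue null''. Neither implication holds. Equidistribution controls time averages of continuous test functions; it says nothing about visits to the measure-zero set $\SL_2(\RR).x$, and in any case zero \emph{density} of returns does not preclude infinitely many returns, which is all that radiality requires. More seriously, even granting that $\Lambda^{\mathrm{rad}}(\Gamma)$ is Lebesgue null, this yields no bound on $\dim_H \Lambda^{\mathrm{rad}}(\Gamma)$: there are plenty of Lebesgue-null subsets of the circle with Hausdorff dimension $1$, and indeed infinitely generated Fuchsian groups can have $\delta=1$ without being lattices. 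So Bishop--Jones plus soft ergodicity gives you nothing here; the inequality $\delta(\Gamma)<1$ for non-lattice Veech groups is essentially the content of the theorem, not a warm-up.

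The quantitative route you sketch would require effective equidistribution of individual Teichm\"uller rays toward $\lambda_M$ with a polynomial rate uniform over $x$, which is not available; the effective results in the Eskin--Mirzakhani--Mohammadi circle concern averaged orbits or counting, not pointwise rays. Your uniformity-via-finiteness step is also off: Eskin--Filip--Wright bounds the number of \emph{maximal} proper affine submanifolds of a fixed $\cM$, not the number of pairs $(M,\SL_2(\RR).x)$ arising from arbitrary $x\in\cH_g$.

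The paper proceeds entirely differently. It argues by contradiction: given $x_i$ with $\delta(\stab(x_i))\to 1$ and non-lattice stabilizers, it produces $\ta$-invariant measures $\mu_i$ on the orbits with entropy (equivalently, leafwise $\tu$-dimension) tending to $1$, and shows via an adapted Margulis-function/isolation argument that $\mu_i\to\lambda_{\cM}$ for the minimal affine $\cM$ containing the $x_i$. It then runs an ``impossible section'' scheme: using the rational structure of $T_{x_i}$ inside $H^1_{\rel}$ it builds $\SL_2(\RR)$-equivariant sections of a projective (Grassmannian) bundle over each orbit, pushes $\mu_i$ up, and passes to a limit which would have to be an $\SL_2(\RR)$-invariant lift of $\lambda_{\cM}$; but the algebraic hull computation of \cite{eskin2018algebraichull} forbids such a lift. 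A separate arithmeticity argument \`a la McMullen handles the residual ``small'' cases where the bundle method does not apply, showing the stabilizer must then be a lattice.
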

Comparing \teich\ dynamics to homogeneous dynamics, Theorem \ref{thm: main} has an analogous result \cite{solan2024critical}:
\begin{theorem}[Critical exponent gap in homogeneous dynamics]\label{thm: old gap}
  Let $\Gamma<\SL_2(\CC)$ be a geometrically finite Kleinian group.
  Then there is an $\varepsilon_\Gamma>0$ such that for every non-periodic orbit $\SL_2(\RR).x$ in $\SL_2(\CC)/\Gamma$ we have a bound on the critical exponent
  \[\delta(\stab_{\SL_2(\RR)}(x)) \le 1-\varepsilon_\Gamma.\]
\end{theorem}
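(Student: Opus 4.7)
The plan is to translate the statement into Kleinian group language and then combine Patterson--Sullivan theory with a compactness argument over the space of totally geodesic planes in $\mathbb{H}^3/\Gamma$.

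First, for $x = g\Gamma \in \SL_2(\CC)/\Gamma$, the stabilizer $\stab_{\SL_2(\RR)}(x) = \SL_2(\RR)\cap g\Gamma g^{-1}$ is conjugate inside $\SL_2(\CC)$ to the subgroup $\Gamma_P := \{\gamma\in\Gamma : \gamma P = P\}$ preserving the totally geodesic plane $P := g^{-1}\cdot \mathbb{H}^2 \subset \mathbb{H}^3$. Since the critical exponent is a conjugation invariant and $\Gamma_P$ acts as a Fuchsian group on $P$, the task reduces to showing that $\delta(\Gamma_P) \leq 1-\varepsilon_\Gamma$ whenever $\Gamma_P$ fails to be a lattice in $\mathrm{Isom}(P)\cong \mathrm{PSL}_2(\RR)$. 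The orbit $\SL_2(\RR).x$ is periodic exactly when $\Gamma_P$ is such a lattice.

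Second, I would establish the non-uniform gap $\delta(\Gamma_P)<1$ whenever $\Gamma_P$ is not a lattice. Using that $\Gamma$ is geometrically finite, one shows (via Bowditch-type arguments on totally geodesic slices of the convex core) that $\Gamma_P$ is itself geometrically finite in $\mathrm{Isom}(P)$, with limit set $\Lambda_{\Gamma_P}\subseteq \Lambda_\Gamma \cap \partial P$ in the circle $\partial P$. For a finitely generated Fuchsian group, the Patterson--Sullivan dichotomy gives that $\delta=1$ forces the group to be of the first kind, and for geometrically finite Fuchsian groups this means finite covolume, i.e., a lattice. So under the non-lattice assumption we get the strict inequality $\delta(\Gamma_P)<1$.

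Third, the crux is to upgrade this strict inequality to a \emph{uniform} gap. I would argue by contradiction: suppose $P_n$ are non-lattice planes with $\delta(\Gamma_{P_n})\to 1$. Using $\Gamma$-equivariance, translate each $P_n$ so that it meets a fixed compact region $K$ of the convex core of $\mathbb{H}^3/\Gamma$, and extract a subsequential geometric limit $P_\infty$. Upper semicontinuity of the critical exponent under such geometric convergence of subgroups (via weak-$*$ convergence of Patterson--Sullivan densities and the variational principle for the geodesic flow) would force $\delta(\Gamma_{P_\infty})\geq 1$, so $P_\infty$ is a lattice plane. A rigidity step then shows that planes sufficiently close to $P_\infty$ modulo $\Gamma$ must either coincide with it or have strictly smaller stabilizer, contradicting $\delta(\Gamma_{P_n})\to 1$.

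The main obstacle is the uniformity/compactness step. Planes can escape into rank-$1$ or rank-$2$ cusps of $\mathbb{H}^3/\Gamma$, and the stabilizer $\Gamma_{P_n}$ may acquire extra parabolic elements from cusps tangent to $P_n$, inflating its critical exponent toward $1$ without $P_n$ admitting a convergent subsequence in any reasonable sense. Handling this requires quantitative Margulis-lemma estimates controlling the interaction of $P_n$ with the thin parts, together with convergence results (\`a la Thurston--Chuckrow--J{\o}rgensen) ensuring that geometric limits of geometrically finite stabilizers remain geometrically finite. Only with such control can one make the semicontinuity of $\delta$ work in our favor and force the limiting plane to be an actual lattice plane, closing the contradiction.
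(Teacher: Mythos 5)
Your reduction to plane stabilizers is correct, but the two load-bearing steps both have genuine gaps. First, in your second step you assert that $\Gamma_P$ is geometrically finite and then invoke the dichotomy ``$\delta=1$ implies lattice.'' Geometric finiteness of $\Gamma_P$ is not something you can get from slicing the convex core: the whole point of the theorem (and of the Teichm\"uller analogue, McMullen's Theorem \ref{thm: McMullen}, which motivates this paper) is that stabilizers of non-periodic orbits can be \emph{infinitely generated} Fuchsian groups. For general (non--geometrically finite) Fuchsian groups the dichotomy fails badly --- e.g.\ an infinite-index normal subgroup of a lattice with amenable quotient has $\delta=1$ --- so even the qualitative statement $\delta(\Gamma_P)<1$ is not established by your argument; it is part of what must be proved. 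Second, the compactness step needs $\delta(\Gamma_{P_\infty})\ge\limsup\delta(\Gamma_{P_n})$ under geometric (Chabauty) convergence, and the critical exponent is not semicontinuous in that direction: orbit mass can escape to infinity and the limit group can be elementary or trivial even when $\delta(\Gamma_{P_n})\to 1$. You correctly flag this as ``the main obstacle,'' but flagging it is not the same as closing it, and the closing is where essentially all of the work lies. The final ``rigidity step'' (planes near a lattice plane coincide with it or have much smaller stabilizer) is itself an isolation theorem of Eskin--Mirzakhani--Mohammadi type, not a routine observation.

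For comparison, the proof this paper relies on (from \cite{solan2024critical}, whose strategy Sections \ref{sec: isolation}--\ref{sec: end of proof} adapt to \teich\ dynamics) avoids geometric limits of groups entirely. From $\delta(\stab(x_i))\to 1$ one produces $\ta$-invariant measures $\mu_i$ on the orbits with entropy (equivalently leafwise dimension) tending to $1$; the non-divergence and equidistribution $\mu_i\to\lambda$ is proved by an additive Margulis-function / linearization argument (this is the rigorous replacement for your semicontinuity step); and the contradiction comes from lifting the $\mu_i$ to a projective bundle via the invariant structure attached to each orbit and showing, via an algebraic-hull computation, that no invariant limit measure can exist there (Proposition \ref{prop: impossible section method1}). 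If you want to salvage your outline, the measure-theoretic reformulation via entropy is the step you cannot do without.
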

In \cite{solan2024critical} we announce that Theorem \ref{thm: old gap} serves as a replacement to the arithmeticity assumption in Lindenstrauss, Mohammadi, and Wang \cite{lindenstrauss2022effective} and prove a polynomial equidistribution result of $\left\{\begin{pmatrix}
  1&s\\0&1
\end{pmatrix}:s\in \RR\right\}$-orbits in nonarithmetic lattice quotients $\SL_2(\CC)/\Gamma$.
Hence Theorem \ref{thm: main} seems like a good starting point to show a polynomial equidistribution result in \teich\ dynamics. See \cite{sanchez2023effective, rached2024separation} for other results in this direction.

A drawback of Theorem \ref{thm: main} is the lack of control over $\varepsilon_g$. This is a consequence of the ergodic nature of the proof. Thus a natural question is the following
\begin{ques}
  Give an estimate of $\varepsilon_g$. 
  Find the minimal $a_g\ge 0$ so that for every $\zeta>0$ there are at most finitely many orbits $\SL_2(\RR).x$ in $\cH_g$ with $\delta(\stab_{\SL_2(\RR)}(x)) \ge a_g + \zeta$ such that $\SL_2(\RR).(x)$ is not periodic.
\end{ques}

\subsection{On the proof}
The overall structure of the proof follows a method previously employed by Eskin, Filip, and Wright \cite{eskin2018algebraichull}, as well as by Bader, Fisher, Miller, and Stover \cite{bader2021arithmeticity}.
While working in different settings, \cite{eskin2018algebraichull} in \teich\ dynamics and \cite{bader2021arithmeticity} in nonarithmetic homogeneous dynamics, both papers prove a finiteness result on certain collections submanifolds. To do so, both works follow the same general scheme, which we will call \emph{the impossible section method}.
We will first describe its simplest version.
\begin{proposition}[Impossible section method for $\SL_2(\RR)$-periodic orbits]\label{prop: impossible section method}
  Assume that $\SL_2(\RR) \acts M$ is an action of $\SL_2(\RR)$ on a locally compact second countable topological space $M$, with an invariant probability measure $\lambda$.
  Assume that $(\SL_2(\RR).x_i)_{i=1}^\infty$ is an infinite sequence of periodic orbits.
  Denote by $\mu_i$ the invariant probability measure on $\SL_2(\RR).x_i$.
  Assume that the following three parts hold:
  \begin{enumerate}[label=ISM-\alph*), ref=(ISM-\alph*)]
    \item \label{part: equidistribution}We have a weak-$*$ convergence $\mu_i \xrightarrow{i\to \infty} \lambda$.
    \item\label{part: lifts to projective} There are a projective bundle $\pi: P\to M$ with a compatible $\SL_2(\RR)$ action, and $\SL_2(\RR)$-equivariant sections $\varphi_i:\SL_2(\RR).x_i\to P$ such that $\pi\circ \varphi_i = {\rm Id}_{\SL_2(\RR).x_i}$.
    \item \label{part: no section}There are no $\SL_2(\RR)$-invariant probability measures on $P$ that project to $\lambda$ on $M$
  \end{enumerate}
  Then we get a contradiction.
\end{proposition}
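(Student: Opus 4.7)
The plan is to exhibit an $\SL_2(\RR)$-invariant probability measure on $P$ projecting to $\lambda$, directly contradicting part \ref{part: no section}. The candidate is built by pushing the periodic orbit measures up to $P$ via the sections and extracting a weak-$*$ limit.

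More precisely, for each $i$ set $\nu_i := (\varphi_i)_* \mu_i$. Since $\varphi_i$ is an $\SL_2(\RR)$-equivariant section on the support of $\mu_i$, the measure $\nu_i$ is an $\SL_2(\RR)$-invariant probability measure on $P$ whose pushforward under $\pi$ is $\mu_i$. The first step is then to show that the sequence $(\nu_i)$ is tight. For this one uses that $\pi : P \to M$ is a projective bundle, so its fibers are compact and $\pi$ is a proper map: for any compact $K \subset M$, the preimage $\pi^{-1}(K) \subset P$ is compact. Given $\varepsilon > 0$, hypothesis \ref{part: equidistribution} and the Portmanteau theorem applied to $\lambda$ yield a compact $K_\varepsilon \subset M$ with $\mu_i(K_\varepsilon) \ge 1-\varepsilon$ for all large $i$; then $\nu_i(\pi^{-1}(K_\varepsilon)) = \mu_i(K_\varepsilon) \ge 1-\varepsilon$, proving tightness.

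By Prokhorov's theorem we can then pass to a subsequence and obtain a weak-$*$ limit $\nu_i \to \nu$ on $P$, where $\nu$ is a probability measure (no mass escapes by tightness). Because $\pi$ is continuous and proper, $\pi_* \nu_i \to \pi_* \nu$ weakly, so $\pi_* \nu = \lim \mu_i = \lambda$ by \ref{part: equidistribution}. Finally, the $\SL_2(\RR)$-action on $P$ is continuous and each $\nu_i$ is $\SL_2(\RR)$-invariant, so for every $g \in \SL_2(\RR)$ and every $f \in C_c(P)$ we have $\int f\,\bd (g_*\nu) = \lim \int f\circ g^{-1}\,\bd \nu_i = \lim \int f\,\bd \nu_i = \int f\,\bd \nu$, showing that $\nu$ is $\SL_2(\RR)$-invariant. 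This contradicts \ref{part: no section}.

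In short, the proof is a soft compactness-plus-invariance argument, and the only real content is verifying tightness via the properness of the projective bundle. The main obstacle one should worry about is mass escape in the non-compact direction of $M$; the assumption that $\lambda$ is a probability measure (so that $\mu_i \to \lambda$ implies tightness of $(\mu_i)$) is exactly what rules this out, and properness of $\pi$ then transports tightness from $M$ to $P$.
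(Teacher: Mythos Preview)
Your proof is correct and follows essentially the same route as the paper: push the $\mu_i$ to $P$ via the equivariant sections, use properness of the projective bundle $\pi$ to extract a weak-$*$ limit probability measure, and observe that this limit is $\SL_2(\RR)$-invariant and projects to $\lambda$, contradicting \ref{part: no section}. The paper compresses the tightness step into the phrase ``since the bundle $P$ is projective,'' but your explicit Prokhorov argument is exactly what lies behind it.
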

\begin{remark}
  Part \ref{part: lifts to projective} can be interpreted as finding
  $\varphi_i(x_i) = \tilde x_i \in \pi^{-1}(x_i)$ satisfying that $\stab_{\SL_2(\RR)}(\tilde x_i) = \stab_{\SL_2(\RR)}(x_i)$. This will not be the way we think on this part here.
\end{remark}
\begin{proof}
  Since the bundle $P$ is projective, we can restrict to a subsequence such that $(\varphi_i)_*(\mu_i)$ has a weak-$*$ limit measure $\tilde \mu$ on $P$.
  Then as a limit of $\SL_2(\RR)$-invariant measure, $\tilde \mu$ is invariant and projects to
  \[\pi_* \tilde \mu = \pi_* \lim_{i}(\psi_i)_*\mu_i = \lim_{i}\pi_*(\psi_i)_*\mu_i =  \lim_{i}\mu_i = \lambda,\]
  where the commutativity of the limit and $\pi_*$ holds since $\pi$ is proper.
  This contradicts part \ref{part: no section}.
\end{proof}
There are variations of this method:
In \cite{eskin2018algebraichull}, the authors use a variation of the method with $M$ being an affine manifold with certain properties and show that there are finitely many maximal affine submanifolds of $M$. Thus, instead of orbits $\SL_2(\RR).x_i$, the authors assume infinitely many general affine submanifolds $\cM_i\subsetneq M$ in the impossible section method, with the Lebesgue measures $\mu_i = \lambda_{\cM_i}$ on them.
To prove Part \ref{part: equidistribution}, \cite{eskin2018algebraichull} uses Eskin, Mirzakhani and Mohammadi \cite{eskin2015isolation}.
The construction of the projective bundle $P$ in Part \ref{part: lifts to projective} uses the Kontsevich-Zorich cocycle and the sections $\varphi_i$ use number-theoretic properties of the manifolds.
The proof of Part \ref{part: no section} involves calculating the algebraic hull of the cocycle that defines $P$.

Although Bader, Fisher, Miller, and Stover \cite{bader2021arithmeticity} proved a more general result, we will focus on the case $\SL_2(\CC) / \Gamma$ where $\Gamma<\SL_2(\CC)$ is a nonarithmetic lattice.
They show that there are only finitely many periodic $\SL_2(\RR)$ orbits in $M=\SL_2(\CC)/\Gamma$.
To do so they use impossible section method described above, implementing it with various techniques. Here, we focus on how they proved Part \ref{part: equidistribution} in their setting, utilizing the work of Mozes and Shah \cite{mozes1995space}.

The goal of Solan \cite{solan2024critical} is to prove Theorem \ref{thm: old gap}.
For this, we use a different variation of the impossible section method.
For every $t,s\in \RR$ denote by
\begin{align}\label{eq: matrices in SL2}
  \ta(t) = \begin{pmatrix}
    e^{t/2}&0\\0&e^{-t/2}
  \end{pmatrix},\qquad
  \tu(s) = \begin{pmatrix}
    1&s\\0&1
  \end{pmatrix}.
\end{align}
\begin{proposition}[Impossible section method with $\SL_2(\RR)$-high critical exponent orbits]\label{prop: impossible section method1}
  Assume that $\SL_2(\RR) \acts M$ is an action of $\SL_2(\RR)$ on a locally compact second countable topological space $M$, with an invariant probability measure $\lambda$.
  Assume that $(\SL_2(\RR).x_i)_{i=1}^\infty$ is an infinite sequence of orbits, with $\delta(\stab_{\SL_2(\RR)}(x_i))\to 1$.
  Then there are $\ta$-invariant probability measures $\mu_i \in \Prob(\SL_2(\RR).x_i)$ with entropies $h_{\mu_i}(\ta(1)) \to 1$.
  If the following three-part method holds, then we get a contradiction.
  \begin{enumerate}[label=\emph{ISM'-\alph*)}, ref=(ISM'-\alph*)]
    \item \label{part: equidistribution1}We have a weak-$*$ convergence $\mu_i \xrightarrow{i\to \infty} \lambda$.
    \item\label{part: lifts to projective1} There is a projective bundle $\pi: P\to M$ with a compatible $\SL_2(\RR)$ action of $G$, and an $\SL_2(\RR)$-equivariant sections $\varphi_i:\SL_2(\RR).x_i\to P$ such that $\pi\circ \varphi_i = {\rm Id}_{\SL_2(\RR).x_i}$.
    \item \label{part: no section1}There are no $\SL_2(\RR)$-invariant probability measures on $P$ that project to $\lambda$ on $M$
  \end{enumerate}
\end{proposition}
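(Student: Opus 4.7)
The plan is to mimic Proposition \ref{prop: impossible section method}, replacing the $\SL_2(\RR)$-invariance of the measures $\mu_i$ (which is unavailable) with a suitable entropy bound, and closing the argument with a measure rigidity step.

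First I construct the $\mu_i$. The orbit $\SL_2(\RR).x_i$ is identified with $\Gamma_i \backslash \SL_2(\RR)$ for $\Gamma_i = \stab_{\SL_2(\RR)}(x_i)$, and the $\ta$-action corresponds to the geodesic flow on the unit tangent bundle of $\mathbb{H}^2/\Gamma_i$. For any nonelementary discrete $\Gamma < \SL_2(\RR)$, the Patterson--Sullivan / Bowen--Margulis--Sullivan construction produces a $\ta$-invariant probability measure of metric entropy exactly $\delta(\Gamma)$ under $\ta(1)$. Applied to $\Gamma_i$ this yields $\mu_i$ with $h_{\mu_i}(\ta(1)) = \delta(\Gamma_i) \to 1$; the elementary case is excluded since elementary discrete subgroups of $\SL_2(\RR)$ have critical exponent equal to $0$ or $1/2$, inconsistent with $\delta(\Gamma_i) \to 1$.

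Assuming parts \ref{part: equidistribution1}--\ref{part: no section1}, the rest of the proof runs parallel to Proposition \ref{prop: impossible section method}. The sections $\varphi_i$ are $\SL_2(\RR)$-equivariant, hence in particular $\ta$-equivariant, so $(\varphi_i)_* \mu_i$ are $\ta$-invariant probability measures on $P$. The fibers of $\pi$ are projective (and hence compact) and the sequence $\mu_i \to \lambda$ is tight, so the lifted sequence is tight as well, and a subsequence converges weak-$*$ to some $\tilde\mu \in \Prob(P)$. By properness of $\pi$ one has $\pi_* \tilde\mu = \lambda$, and $\tilde\mu$ is $\ta$-invariant as a weak-$*$ limit. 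Combining upper semicontinuity of metric entropy (valid here because there is no escape of mass) with the fact that sections preserve entropy, one obtains $h_{\tilde\mu}(\ta(1)) \geq \limsup_i h_{\mu_i}(\ta(1)) = 1$. A measure rigidity argument should then force $\tilde\mu$ to be $\SL_2(\RR)$-invariant, directly contradicting \ref{part: no section1}.

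The main obstacle is this last rigidity step. Over the base $M$, the Ruelle inequality forces any $\ta$-invariant probability measure with $\ta(1)$-entropy equal to $1$ to have Lebesgue conditionals on the unstable horocycle leaves, hence to be $\tu$-invariant and consequently $\SL_2(\RR)$-invariant. On the projective bundle $P$, however, additional Lyapunov exponents may appear in the fiber direction, and the bound $h_{\tilde\mu}(\ta(1)) \geq 1$ no longer translates directly into horocycle invariance. Bridging this gap will likely require a careful Ledrappier--Young analysis on $P$ together with the Eskin--Mirzakhani measure classification applied to $\lambda = \pi_* \tilde\mu$, in order to propagate unstable invariance from the base to the total space.
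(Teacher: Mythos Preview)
Your outline is correct up to the point you yourself flag: the passage from $h_{\tilde\mu}(\ta(1))\ge 1$ to $\SL_2(\RR)$-invariance of $\tilde\mu$ on $P$. That is a genuine gap, and the Ledrappier--Young route you propose is both heavier than necessary and not obviously workable in this generality (the space $P$ is only an LCSC space with a continuous $\SL_2(\RR)$-action; there is no smooth structure to run Pesin theory on). There is also a secondary issue: your appeal to upper semicontinuity of entropy is not justified here. Tightness guarantees that the limit is a probability measure, but upper semicontinuity of $h_{\bullet}(\ta(1))$ requires hypotheses such as expansiveness or asymptotic $h$-expansiveness, none of which are available for an abstract $B$-action.

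The paper sidesteps both problems by replacing entropy with the \emph{leafwise dimension} $\dim^\tu$ of Definition~\ref{def: leafwise dimension}. On the base the two agree, $h_{\mu_i}(\ta(1))=\dim^\tu(\mu_i)=\dim^{\tu^t}(\mu_i)$ by \cite[Thm~7.6(ii)]{einsiedler2010diagonal}, so $\dim^\tu(\mu_i)\to 1$. The point is that leafwise dimension sees only the $\tu$-direction: since each $\varphi_i$ is an $\SL_2(\RR)$-equivariant embedding, it carries $\tu$-orbits to $\tu$-orbits, and one has $\dim^\tu((\varphi_i)_*\mu_i)=\dim^\tu(\mu_i)\to 1$ (and likewise for $\tu^t$), with no contribution from fiber exponents. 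The rigidity input is then Theorem~\ref{thm: solan gap main} (\cite[Thm~3.9]{solan2024critical}), which holds for any continuous $B$-action on an LCSC space: a weak-$*$ limit of $\ta$-invariant, $\tu$-free measures with $\dim^\tu\to 1$ is $\tu$-invariant. Applying this in both horocycle directions on $P$ gives $\SL_2(\RR)$-invariance of $\tilde\mu$, and the contradiction with \ref{part: no section1} follows exactly as in your sketch.
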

In Appendix \ref{appendix: proof impossible section strat} we will deduce Proposition \ref{prop: impossible section method1} from results in \cite{solan2024critical}.
\begin{discussion}
  Heuristically, the variation of Proposition \ref{prop: impossible section method1} from Proposition \ref{prop: impossible section method} is the following observation: we do not need the measures $\mu_i$ themselves to be $\SL_2(\RR)$ invariant, only \emph{sufficiently invariant} to ensure that the partial limits $\lim \mu_i$ and $\lim(\varphi_i)_*\mu_i$ are invariant.
  While the invariance assumption on $\lim \mu_i$ is not crucial to the proof of the proposition, it is crucial to the proof of Part \ref{part: equidistribution1} in the implementations.
\end{discussion}
Part \ref{part: lifts to projective1} in \cite{solan2024critical} is very similar to the same part in \cite{bader2021arithmeticity}, and part \ref{part: no section1} is identical.
However, for part \ref{part: equidistribution1} Solan \cite{solan2024critical} had to adapt the Mozes-Shah Linearization argument.

The relation between \cite{solan2024critical} and \cite{bader2021arithmeticity} is similar to the relation between this work and \cite{eskin2018algebraichull}. Here we also use this variation of the impossible section method.
Part \ref{part: lifts to projective1} here uses the Kontsevich-Zorich cocycle in a similar way to \cite{eskin2018algebraichull}. Our part \ref{part: no section1} is an application of their computations of an algebraic hull.
For part \ref{part: equidistribution1} we show an equidistribute result and adapt \cite{eskin2015isolation} to the high critical exponent setting. For this, we use $\varepsilon$-additive Margulis function theory that was developed \cite[\S5.1]{solan2024critical}.

The impossible section method does not appear to work for certain orbits, such as those in $\cH(2)$. However, in $\cH(2)$ McMullen \cite[Thm 5.8]{mcmullen2003genus2dynamics} shows that if an orbit $\SL_2(\RR).x$ has a hyperbolic element in its stabilizer, then it is periodic. 
This phenomenon extends to a dichotomy: either we can apply the impossible section method to an orbit, or McMullen's method shows that it must be periodic. 

\subsection{Structure of the paper}  
In Section \ref{sec: notations and preliminaries}, we recall standard notions on \teich\ dynamics. 
In Section \ref{sec: isolation}, we prove the equidistribution required by Part \ref{part: equidistribution1}. 
In Section \ref{sec: end of proof}, we complete the proof of Theorem \ref{thm: main}. 
The proof is divided into three parts: in Subsection \ref{ssec: work in H11}, we demonstrate how the method works for orbits in $\cH(1,1)$; 
in Subsection \ref{ssec: proof M large}, we extend the proof to a larger class of orbits; 
and in Subsection \ref{ssec: arithmeticity}, we apply different methods to the remaining orbits.

\subsection{Acknowledgements}
I thank my advisor Elon Lindenstrauss for introducing me to this problem, and Barak Weiss, Curtis McMullen, John Rached, and Alex Wright for useful discussions.
\section{Notation and preliminaries}
\label{sec: notations and preliminaries}
We first recall the dynamical system we are studying. 
\begin{definition}[\teich\ dynamics]\label{def: teich dynamics}
  Let $g\ge 2$.
  The \emph{moduli space of flat surfaces} $\tilde \cH_g$ is the collection of pairs $(S,\omega)$ of a genus $g$ Riemannian $S$ and a nonzero closed form $\omega\in \Omega^1(S;\CC)$ that is holomorphic with respect to some holomorphic structure of $S$, up to diffeomorphisms of the pair $(S,\omega)$. 
  Identifying $\CC\cong \RR^2$, the group $\GL_2(\RR)$ acts on the values of $\omega$ via this identification, thereby inducing an action on $\tilde\cH_g$. Note that the action does not preserve the holomorphic structure of $S$ for which $\omega$ is holomorphic. 
  
  We can decompose $\tilde\cH_g$ to stratas via the root pattern of $\omega$:
  let $k_1,\dots,k_m$ be positive integers satisfying $k_1 + \dots + k_m = g$. 
  The strata $\tilde\cH(k_1,\dots,k_m)\subset \tilde\cH_g$ consists of pairs $(S, \omega)$ such that $\omega$ vanishes at exactly $m$ points with orders $k_1, \dots, k_m$. 

  Since we consider the $\SL_2(\RR)$ action, the volume $\frac{i}{2}\int_S \omega\wedge \bar \omega$ is invariant to the action. 
  Hence we study the collection of volume one surfaces, 
  \[\cH_g = \left\{(S,\omega) \in \tilde\cH_g: \frac{i}{2}\int_S \omega\wedge \bar \omega=1\right\},\]
  and in this space we study the stratas $\cH(k_1,\dots,k_m) = \tilde \cH(k_1,\dots,k_m) \cap \cH_g$.
  
  We recall several notions on the spaces $\tilde \cH(k_1,\dots,k_m)$. 
  For every $(S,\omega) \in \tilde \cH(k_1,\dots,k_m)$ there is a neighborhood of $(S,\omega)$ locally parametrized by $H^1_{\rel}(S, \Sigma;\RR)\otimes \RR^2$, where $\Sigma$ is the set of zeroes of $\omega$ and $H^1_{\rel}(S, \Sigma;\RR)$ is the the cohomology group of $S$ relative to the set $\Sigma$ of zeroes of $\omega$. 
  
  Let $\tilde \cM\subseteq \tilde \cH(k_1,\dots,k_m)$ be a $\GL_2(\RR)$-invariant subset that is locally define by linear equations via the $H^1(S, \Sigma;\RR)\otimes \RR^2$-parametrization. 
  The intersection $\cM = \tilde \cM\cap \cH^1(k_1,\dots,k_m)$ is called affine manifold.
  Each affine manifold has an $\SL_2(\RR)$-invariant Lebesgue probability measure $\lambda_\cM$ defined via its locally affine structure. 
\end{definition}

\begin{definition}[Measure notaion]\label{def: notation measures}
  Let $X$ be a locally compact topological space. We denote by $\Prob(X)$ the set of probability measures on $X$.

\end{definition}
Finally we recall some notions in $\SL_2(\RR)$-dynamics. 
\begin{definition}[Critical exponent]\label{def: critical exponent}
  Let $\Lambda < \SL_2(\RR)$ be a discrete subgroup. Define the \emph{critical exponent} of $\Lambda$ by
  \[\delta(\Lambda) = \limsup_{R\to \infty}\frac{\log \#(B_{\SL_2(\RR)}(R)\cap \Lambda)}{R}.\]
  Here $B_{\SL_2(\RR)}(R)$ is the radius $R$ ball in $\SL_2(\RR)$ with respect to the Riemannian metric that is left invariant and right $\SO(2)$-invariant.
  If $\Lambda$ is a lattice, then $\delta(\Lambda) = 1$, and this is the maximal possible critical exponent.
\end{definition}
\begin{definition}\label{def: leafwise dimension}
  Let $B<\SL_2(\RR)$ denote the group generated by $\ta(t), \tu(s)$ for all $t,s\in \RR$. 
  For $X$ be an LCSC space and $B\acts X$ be a continuous action. 
  We say that a measure $\mu\in \Prob(X)$ is \emph{$\tu$-free} if for $\mu$-almost every $x\in X$ we have $\stab_\tu(x) = \{0\}$. 
  If in addition, $\mu$ is $\ta$-invariant and ergodic, one can define the \emph{leafwise dimension} quantity, $\dim^\tu(\mu) \in [0,1]$ as in \cite[Def. 3.7]{solan2024critical}. 
  In this work, we will only use properties of $\dim^\tu(\mu)$ that were established in \cite{solan2024critical}. 
\end{definition}
\section{Non-degenerate limits}
\label{sec: isolation}
In this section we will prove the following theorem, that is parallel to \cite[Lemma 4.7]{solan2024critical}.
\begin{theorem}\label{thm: limit is Lebesgue}
  Let $\vec k = (k_i)_{i=1}^m$ be an vector of positive integers. 
  Let $\cM\subseteq\cH(\vec k)$ be an affine manifold, and $(\mu_n)_{n=0}^\infty$ be a sequence of $\ta$-invariant and ergodic probability measures on $\cM$ such that $\mu_n(\cM') = 0$ for every affine submanifold $\cM' \subsetneq \cM$ and for every $n$ sufficiently large as a function of $\cM'$. 
  Assume that $\dim^\tu(\mu_n)
  \xrightarrow{n\to \infty}1$.
  Then $\mu_n \xrightarrow[\text{weak-}*]{n\to \infty} \lambda_{\cM}$. 
\end{theorem}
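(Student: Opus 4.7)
The plan is to pass to a weak-$*$ subsequential limit $\mu$ of $(\mu_n)$, show that $\mu$ is an $\SL_2(\RR)$-invariant probability measure assigning no mass to any proper affine submanifold, and then conclude by the Eskin--Mirzakhani measure classification \cite{eskin2018invariant} that $\mu = \lambda_\cM$. Three things must be established along the way: (i) no escape of mass, (ii) upgrading invariance from $\ta$-invariance of the $\mu_n$ to $\SL_2(\RR)$-invariance of $\mu$, and (iii) no concentration on proper affine submanifolds in the limit.

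For (ii), I would exploit the hypothesis $\dim^\tu(\mu_n) \to 1$ through the properties of leafwise dimension recorded in \cite[\S3]{solan2024critical}. Heuristically, leafwise dimension close to $1$ forces the $\mu_n$-conditionals on $\tu$-orbits to be nearly Haar, and this should pass to the weak-$*$ limit to yield $\tu$-invariance of $\mu$. Combined with the automatic $\ta$-invariance of $\mu$, we obtain $B$-invariance, so by \cite{eskin2018invariant} the measure $\mu$ (once we know it is a probability measure) is $\SL_2(\RR)$-invariant and equals a convex combination of Lebesgue measures $\lambda_{\cM'}$ on affine submanifolds $\cM' \subseteq \cH(\vec k)$.

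For (i) and (iii) I would adapt the linearization technique of Eskin--Mirzakhani--Mohammadi \cite{eskin2015isolation}. To every proper affine submanifold $\cM' \subsetneq \cM$ one associates a linear section of an appropriate Kontsevich--Zorich bundle whose zero set cuts out $\cM'$, and one seeks to bound the $\mu_n$-mass of a small tubular neighborhood of $\cM'$ by an integral of a Margulis-type function. In \cite{eskin2015isolation} the requisite integral bounds use full $\tu$-invariance of the measure, which is not available here. Instead I would substitute the $\varepsilon$-additive Margulis function theory of \cite[\S5.1]{solan2024critical}, which is precisely designed to run in the $\ta$-invariant, high-leafwise-dimension regime. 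Combining this with the hypothesis $\mu_n(\cM') = 0$ for $n$ large depending on $\cM'$, an induction on the dimension and complexity of $\cM$ (along the same lines as in \cite{eskin2015isolation}) rules out both concentration on each proper $\cM'$ and escape of mass to the cusps of $\cM$, giving that $\mu$ is a probability measure supported on no proper affine submanifold. By the classification in the previous paragraph this forces $\mu = \lambda_\cM$, as desired.

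The main obstacle is step (iii): the linearization and non-divergence estimates of \cite{eskin2015isolation} were built for $\SL_2(\RR)$-invariant measures, and replacing $\tu$-invariance by ``leafwise dimension tending to $1$'' is exactly the kind of adaptation that required substantial new machinery in \cite{solan2024critical}. The work therefore lies in verifying that the cocycle growth and recurrence estimates associated to the Kontsevich--Zorich cocycle are compatible with the $\varepsilon$-additive Margulis function formalism, so that the inductive isolation argument can be run against $\ta$-invariant measures with $\dim^\tu \to 1$ rather than against honestly $B$-invariant measures.
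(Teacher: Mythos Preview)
Your overall architecture matches the paper: pass to a subsequential weak-$*$ limit $\mu_\infty$, use \cite[Thm.~3.9]{solan2024critical} to upgrade to $\tu$-invariance in the limit, apply Eskin--Mirzakhani to write $\mu_\infty=\sum_i\alpha_i\lambda_{\cM_i}$, and then use the $\varepsilon$-additive Margulis machinery of \cite[\S5]{solan2024critical} to kill any component with $\cM_i\subsetneq\cM$ (and, by the same argument with $\cM_i=\emptyset$, to rule out escape of mass).

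Where you diverge is in step (iii). You propose to run the \emph{linearization} side of \cite{eskin2015isolation}, with KZ-cocycle sections cutting out $\cM'$ and an induction on complexity. The paper does none of that. It takes the ready-made Margulis function $f_{\cM'}$ of \cite[Prop.~2.13]{mohammadi2023isolations}, which already satisfies an $\SO(2)$-averaged contraction inequality $\frac{1}{2\pi}\int_0^{2\pi}f_{\cM'}(\ta(t)r_\theta x)\,d\theta\le cf_{\cM'}(x)+b$, and proves a short change-of-averaging lemma converting this into a discrete $\tu$-average contraction
\[
2^{-\ell}\sum_{i=0}^{2^\ell-1}f_{\cM'}(\tu(i)\ta(\ell\log 2)x)\le cf_{\cM'}(x)+b'.
\]
This is exactly the averaging that appears in the Markov chain $x\mapsto\omega_{n,x}^\ell$ manufactured by \cite[Lem.~3.10]{solan2024critical} from the high leafwise dimension, so one can directly check that $\log f_{\cM'}$ is a $(\delta;T_0,T_1)$-additive Margulis function for that chain and invoke \cite[Lem.~5.3]{solan2024critical}. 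No linearization, no KZ cocycle estimates, and no induction on dimension are needed; each potentially bad $\cM_i$ in the ergodic decomposition is disposed of in one shot. Your route would presumably work, but the paper's is both shorter and avoids the compatibility checks you flag as the main obstacle.
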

\subsection{Preliminary results}
In this subsection we gather several known results that will help us to prove Theorem \ref{thm: limit is Lebesgue}.
Fix $\vec k = (k_i)_{i=1}^m$. 
First we will introduce result on \teich\ dynamics.
\begin{theorem}[Eskin Mirzakhani \cite{eskin2018invariant}]\label{thm: Eskin Mirzakhani}
  Any $B$-invariant probability measure $\mu \in \Prob(\cH(\vec k))$ is a sum $\sum_{i} \alpha_i \lambda_{\cM_i}$, where $\cM_i$ are affine manifold in $\cH(\vec k)$. 
\end{theorem}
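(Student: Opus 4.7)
The plan is to take an arbitrary weak-$*$ partial limit $\mu$ of $(\mu_n)$ and identify it with $\lambda_{\cM}$ through three successive steps: proving $B$-invariance, establishing tightness, and ruling out concentration on proper affine submanifolds. The last step, requiring a linearization-style argument adapted to the $\varepsilon$-additive Margulis function theory from \cite[\S5.1]{solan2024critical}, will be the main obstacle.

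For the first step I would argue that any weak-$*$ partial limit $\mu$ of $\mu_n$ is automatically $B$-invariant. It is $\ta$-invariant as a limit of $\ta$-invariant measures. For $\tu$-invariance, I would invoke the properties of $\dim^\tu$ established in \cite{solan2024critical}: measures with leafwise dimension approaching $1$ become asymptotically $\tu$-invariant along a positive-density set of scales, so the limit is $\tu$-invariant. Since $\ta$ and $\tu$ generate $B$, this yields $B$-invariance. For the second step, I need to show no escape of mass — that $\mu$ is a probability measure. Here I would adapt the Margulis/Minsky–Weiss-type function on $\cH(\vec k)$ used in \cite{eskin2018invariant, eskin2015isolation} to control cusp excursions under the $\ta$-flow, combined with the high-leafwise-dimension hypothesis, ensuring $\mu_n$ does not leak mass to the thin parts. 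This is analogous to the tightness step of \cite[Lemma 4.7]{solan2024critical}, transported to the stratum setting.

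Once $\mu$ is known to be a $B$-invariant probability measure on $\cM$, Theorem \ref{thm: Eskin Mirzakhani} decomposes it as a convex combination $\mu = \sum_i \alpha_i \lambda_{\cM_i}$ of affine measures. The goal of the last step is to show that the only component that appears is $\lambda_\cM$ itself; equivalently, no mass of $\mu$ sits on a proper affine submanifold $\cM' \subsetneq \cM$. If some $\alpha_i$ corresponds to $\cM_i \subsetneq \cM$, then by a linearization / Dani–Margulis type argument adapted to \teich\ dynamics (as in \cite{eskin2015isolation}) a definite fraction of the mass of $\mu_n$ must accumulate, for arbitrarily long times under $\ta$, in a small tube around the linearizing locus of $\cM_i$. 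I would quantify this tube accumulation using the $\varepsilon$-additive Margulis function machinery from \cite[\S5.1]{solan2024critical}: high leafwise dimension forces the measures $\mu_n$ to spread along the $\tu$-direction transverse to $\cM_i$, producing a non-negligible fraction of $\mu_n$-mass inside the linearized tube around $\cM_i$ itself (i.e.\ on the affine submanifold $\cM_i$). This contradicts the hypothesis $\mu_n(\cM_i)=0$ for large $n$.

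The hard part will be the third step, because the standard linearization arguments for unipotent flows rely on genuine $\tu$-invariance of $\mu_n$, while here we only have an approximate version expressed through $\dim^\tu(\mu_n) \to 1$. I expect to follow the same strategy used in \cite[Lemma 4.7]{solan2024critical}: replace the exact invariance input in the Dani–Margulis linearization by the $\varepsilon$-additive Margulis inequalities, which remain valid on measures of near-maximal leafwise dimension and still allow one to conclude that the $\mu_n$-mass trapped near a proper invariant subvariety is at least as large as the mass coming out of its linearized neighborhood. Combining this with the Eskin–Mirzakhani–Mohammadi isolation techniques on the stratum $\cH(\vec k)$ should then force all of $\mu$ onto $\cM$, yielding $\mu = \lambda_\cM$ and, since the partial limit was arbitrary, the claimed weak-$*$ convergence.
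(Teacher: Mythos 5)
There is a fundamental mismatch here: the statement you were asked to prove is Theorem \ref{thm: Eskin Mirzakhani}, the Eskin--Mirzakhani measure classification itself --- that \emph{every} $B$-invariant probability measure on the stratum $\cH(\vec k)$ is a convex combination $\sum_i \alpha_i \lambda_{\cM_i}$ of affine Lebesgue measures. Your proposal does not prove this; it \emph{uses} it. In your third paragraph you write ``Once $\mu$ is known to be a $B$-invariant probability measure on $\cM$, Theorem \ref{thm: Eskin Mirzakhani} decomposes it as a convex combination...'', which makes the argument circular as a proof of that theorem. What you have actually sketched is a proof of Theorem \ref{thm: limit is Lebesgue} (the equidistribution statement for the sequence $\mu_n$ with $\dim^\tu(\mu_n)\to 1$), and for \emph{that} statement your outline is broadly consistent with the paper's: weak-$*$ limit, $\tu$-invariance via \cite[Thm.~3.9]{solan2024critical}, the Eskin--Mirzakhani decomposition, and the $\varepsilon$-additive Margulis function machinery to exclude components on proper affine submanifolds. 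But that is a different theorem from the one in question.

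The paper does not prove Theorem \ref{thm: Eskin Mirzakhani} at all --- it is quoted as a black box from \cite{eskin2018invariant}. Its actual proof is the content of the Eskin--Mirzakhani paper: a measure rigidity argument for the action of the upper-triangular subgroup $B$ on a stratum, built on exponential drift, the entropy inequalities of Margulis--Tomanov/Einsiedler--Katok--Lindenstrauss type, and Benoist--Quint-style random walk techniques, ultimately showing that any such measure is $\SL_2(\RR)$-invariant and affine. None of the ingredients you list (tightness via Minsky--Weiss functions, linearization, leafwise dimension) can substitute for that machinery, and no argument of the length you propose could. If the intended task was Theorem \ref{thm: limit is Lebesgue}, your plan is reasonable and close to the paper's; if it really was Theorem \ref{thm: Eskin Mirzakhani}, the only correct ``proof'' in the context of this paper is the citation.
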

Set \begin{align*}
  r_\theta = \begin{pmatrix}
    \cos(\theta)&-\sin(\theta)\\
    \sin(\theta)&\cos(\theta)
  \end{pmatrix}.
\end{align*}
\begin{proposition}[Eskin Mirzakhani Mohammadi {{{\cite[Proposition 2.13]{mohammadi2023isolations}}}}]\label{prop: Eskin Mirzakhani Mohammadi technical}
  For every affine submanifold $\cM \subsetneq \cH(\vec k)$, there is a function $f_{\cM}:\cH(\vec k)\to [1,\infty]$ that satisfies the following properties.
  \begin{enumerate}
    \item $f_{\cM}$ is $SO(2)$-invariant.
    \item \label{point: function proper}$f_{\cM}^{-1}(\infty) = \cM$ and for every $\ell>0$ the set $\overline{f_{\cM}^{-1}([1,\ell])}$ is a compact subset of $\cH(\vec k)\setminus \cM$. 
    \item \label{point:Margulis}There exists $b>0$ depending on $\cM$ such that for every $0<c<1$ there is $t_c>0$, such that for every $t\ge t_c$ we have 
    \begin{align}
      \label{eq: strict Margulis}
      \frac{1}{2\pi}\int_{0}^{2\pi} f(\ta(t)r_{\theta}.x)\bd \theta \le c f_{\cM}(x) + b. 
    \end{align}
    \item \label{point: Lipschitz}For every compact subset $K\subseteq G$ there is $\sigma_K>1$ so that 
    \begin{align}
      \label{eq: Lipschitz}
      \forall g\in K,\quad f_{\cM}(gx) \in [\sigma_K^{-1} f_{\cM}(x), \sigma_Kf_{\cM}(x)].
    \end{align}
  \end{enumerate}
\end{proposition}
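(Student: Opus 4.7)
The plan is to follow the three-step template of \cite[Lemma 4.7]{solan2024critical}, replacing Ratner's measure classification by Theorem \ref{thm: Eskin Mirzakhani} and the homogeneous Margulis function by the one supplied by Proposition \ref{prop: Eskin Mirzakhani Mohammadi technical}. After passing to a subsequence, let $\mu_\infty$ be any weak-$*$ partial limit of $(\mu_n)$; standard stratum-cusp tightness ensures that $\mu_\infty$ is a probability measure on $\cH(\vec k)$. I would then carry out:
\begin{enumerate}
\item[(1)] Upgrade the $\ta$-invariance of $\mu_\infty$ to $B$-invariance, using the hypothesis $\dim^\tu(\mu_n)\to 1$.
\item[(2)] Apply Theorem \ref{thm: Eskin Mirzakhani} to decompose $\mu_\infty = \sum_i \alpha_i \lambda_{\cM_i}$ with each $\cM_i$ an affine submanifold contained in $\cM$; since each $\lambda_{\cM_i}$ is $\SL_2(\RR)$-invariant, this upgrades $\mu_\infty$ to full $\SL_2(\RR)$-invariance.
\item[(3)] Eliminate every $\cM_i\subsetneq \cM$ via the Margulis function $f_{\cM_i}$ from Proposition \ref{prop: Eskin Mirzakhani Mohammadi technical}, forcing $\mu_\infty = \lambda_\cM$.
\end{enumerate}

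For step (3), the $\SL_2(\RR)$-invariance acquired in step (2), together with the $SO(2)$-invariance of $f_{\cM'}$ (item 1 of Proposition \ref{prop: Eskin Mirzakhani Mohammadi technical}), means that integrating the contraction inequality \eqref{eq: strict Margulis} against $\mu_\infty$ yields $\int f_{\cM'}\, d\mu_\infty \le c\int f_{\cM'}\, d\mu_\infty + b$, hence $\int f_{\cM'}\, d\mu_\infty \le b/(1-c)$, once initial integrability is secured by a standard truncation-and-bootstrap exploiting the Lipschitz control \eqref{eq: Lipschitz}. Because $\{f_{\cM'}<\infty\} = \cH(\vec k)\setminus \cM'$ by item \ref{point: function proper}, this finiteness forces $\mu_\infty(\cM') = 0$ for every proper affine submanifold $\cM'\subsetneq \cM$, killing every non-$\cM$ term in the Eskin-Mirzakhani decomposition. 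The assumption that $\mu_n(\cM')=0$ for $n$ large enters this bootstrap, guaranteeing that the approximating measures themselves do not concentrate on $\cM'$ at finite stages.

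The main obstacle will be step (1): transferring the leafwise-dimension condition $\dim^\tu(\mu_n)\to 1$ into genuine $\tu$-invariance of the weak-$*$ limit. One must argue that the conditional measures of $\mu_n$ on $\tu$-orbits -- which in effect approach Lebesgue as $\dim^\tu(\mu_n)\to 1$ -- yield exact $\tu$-invariance of $\mu_\infty$ in the limit, rather than only approximate invariance. I would adapt the conditional-measure argument carried out in the homogeneous setting of \cite{solan2024critical}, using that the leafwise dimension machinery developed there is formulated in sufficient LCSC generality to apply to $\cH(\vec k)$. The $\ta$-invariance of $\mu_\infty$ passes automatically to the weak-$*$ limit by continuity of the $\ta$-action, and combined with the newly-established $\tu$-invariance this yields the required $B$-invariance and closes the argument.
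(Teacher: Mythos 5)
Your proposal does not prove the statement it is supposed to prove. The statement is Proposition \ref{prop: Eskin Mirzakhani Mohammadi technical}: the \emph{construction} of an $SO(2)$-invariant Margulis function $f_{\cM}$ blowing up exactly on $\cM$ and satisfying the contraction inequality \eqref{eq: strict Margulis} and the Lipschitz property \eqref{eq: Lipschitz}. What you have sketched is instead a proof of Theorem \ref{thm: limit is Lebesgue} (the equidistribution statement), and your argument explicitly invokes Proposition \ref{prop: Eskin Mirzakhani Mohammadi technical} as one of its two main inputs --- so as a proof of that proposition it is circular, and as written it establishes nothing about the existence of $f_{\cM}$. For the record, the paper itself does not prove this proposition either: it is imported verbatim from Eskin--Mirzakhani--Mohammadi. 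An actual proof requires constructing $f_{\cM}$ from the geometry of $\cM$ inside the stratum (roughly, a negative power of a distance-like quantity to $\cM$ built from period coordinates, with the exponent tuned so that the $SO(2)$-averaged $\ta(t)$-push satisfies the strict contraction), and none of that appears in your write-up.

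Even judged as a proof of Theorem \ref{thm: limit is Lebesgue}, step (3) has a genuine gap. You propose to integrate \eqref{eq: strict Margulis} against $\mu_\infty$ and conclude $\int f_{\cM'}\,d\mu_\infty \le b/(1-c)$ after a ``truncation-and-bootstrap.'' But the scenario you must rule out is precisely $\mu_\infty(\cM') = \alpha' > 0$ with $f_{\cM'} \equiv \infty$ on $\cM'$, in which case $\int f_{\cM'}\,d\mu_\infty = \infty$ unconditionally and no truncation can secure integrability; the inequality $\infty \le c\cdot\infty + b$ carries no information. Moreover the finite-stage measures $\mu_n$ are only $\ta$-invariant, not $SO(2)$-invariant, so the rotation-averaged inequality \eqref{eq: strict Margulis} cannot be integrated against them directly either. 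The paper's argument circumvents both problems: it first converts the $SO(2)$-average into a discrete horocycle average $\frac{1}{2^\ell}\sum_{i=0}^{2^\ell-1} f_{\cM'}(\tu(i)\ta(\ell\log 2)x)$ (Claim \ref{claim: strict Margulis different op}), then uses the high leafwise dimension to show that the Markov chain of Lemma \ref{lem: solan quantitive leafwise entropy} nearly realizes this uniform average on a set of $S\mu_n$-measure $\ge 1-\delta$, and finally runs the $(\varepsilon; T_0,T_1)$-additive Margulis function machinery on $(\cM\setminus\cM', S\mu_n)$ at finite $n$ --- where $\mu_n(\cM')=0$ by hypothesis --- to obtain tightness estimates uniform in $n$ that contradict $\mu_\infty(\cM')>0$ in the limit. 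Your steps (1) and (2) do match the paper's opening moves (Theorem \ref{thm: solan gap main} for $\tu$-invariance of the limit, then Theorem \ref{thm: Eskin Mirzakhani}), but the core of the elimination argument is missing.
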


Next, we require several results from Solan \cite{solan2024critical} that will help us to deal with measures with high leafwise dimension.
Let $B$ as in Definition \ref{def: leafwise dimension}.
\begin{theorem}[Solan {{\cite[Thm. 3.9]{solan2024critical}}}]\label{thm: solan gap main}
  Let $B\acts X$ be a continuous action on an LCSC space $X$. 
  Let $(\mu_n)_{n=1}^\infty$ be a sequence of probability measures on $X$ that are $\tu$-free and $\ta$-invariant and ergodic.  
  Then if $(\mu_n)_{n=1}^\infty$ has a weak-$*$ limit, then it is $\tu$-invariant. 
\end{theorem}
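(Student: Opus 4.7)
The plan is to adapt the Eskin-Mirzakhani-Mohammadi (EMM) isolation argument to the setting where the $\mu_n$ are only $\ta$-invariant rather than $B$- or $\SL_2(\RR)$-invariant. After passing to a subsequence, I let $\mu$ denote the weak-$*$ limit of $(\mu_n)$, a priori a subprobability measure supported on the closed set $\cM$. The task is to show $\mu=\lambda_\cM$ for every subsequential limit; convergence of the whole sequence then follows by uniqueness of the limit.

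The first step is to upgrade $\mu$ to a $B$-invariant probability measure. Each $\mu_n$ is $\tu$-free (this is implicit in the definition of $\dim^\tu$ in Definition \ref{def: leafwise dimension}) and $\ta$-ergodic, and the hypothesis $\dim^\tu(\mu_n)\to 1$ is exactly the input that allows Theorem \ref{thm: solan gap main} to force the weak-$*$ limit to be $\tu$-invariant. Combined with the automatic $\ta$-invariance, this gives $B$-invariance of $\mu$. Eskin-Mirzakhani (Theorem \ref{thm: Eskin Mirzakhani}) then decomposes $\mu=\sum_i\alpha_i\lambda_{\cM_i}$ with each $\cM_i\subseteq\cM$ an affine submanifold and $\sum_i\alpha_i\le 1$.

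The heart of the proof is to rule out $\cM_i\subsetneq\cM$ in this decomposition and simultaneously to prevent loss of mass. For each proper $\cM'\subsetneq\cM$ I apply the Margulis function $f_{\cM'}$ of Proposition \ref{prop: Eskin Mirzakhani Mohammadi technical}, and to control escape into the cuspidal end of $\cH(\vec k)$ I use an analogous Margulis function for the stratum boundary. In the classical EMM argument one integrates the inequality \eqref{eq: strict Margulis} against an $SO(2)$-invariant measure to obtain $\int f_{\cM'}\,d\mu\le b/(1-c)$, which rules out concentration on $\cM'$; here $\mu_n$ is only $\ta$-invariant, so I substitute the $SO(2)$-rotation by a Birkhoff average along the $\ta$-flow and exploit the hypothesis $\mu_n(\cM')=0$ for $n$ large via the $\varepsilon$-additive Margulis function machinery developed in \cite[\S5.1]{solan2024critical}. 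The output is a bound $\sup_n \mu_n(f_{\cM'}\ge L)\to 0$ as $L\to\infty$, which gives both $\mu(\cM)=1$ and $\mu(\cM')=0$ for every proper $\cM'\subsetneq\cM$; combined with the decomposition this forces $\mu=\lambda_\cM$.

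The principal obstacle is precisely this loss of the $SO(2)$-symmetry that the classical EMM argument relies on: the inequality \eqref{eq: strict Margulis} delivers contraction only after rotation, and the rotation cannot be integrated against the merely $\ta$-invariant $\mu_n$ directly. The resolution, following the high-critical-exponent framework of \cite[\S5.1]{solan2024critical}, leverages the near-maximality of $\dim^\tu(\mu_n)$ to produce an additive substitute for the rotation-averaged multiplicative Margulis inequality, valid uniformly along the $\ta$-orbit. A secondary technical point is that the EM decomposition of $\mu$ may in principle involve countably many $\cM_i$; but the non-concentration hypothesis is applied to each $\cM_i$ separately and the ``sufficiently large $n$'' it requires depends only on $\cM_i$, so one obtains $\alpha_i=0$ for each proper $\cM_i$ one at a time.
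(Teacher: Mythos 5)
Your proposal does not prove the statement it was assigned. Theorem \ref{thm: solan gap main} is the assertion that a weak-$*$ limit of $\tu$-free, $\ta$-invariant and ergodic probability measures is $\tu$-invariant; it is a statement purely about $B$-actions on an abstract LCSC space, with no Teichm\"uller content. What you have written is instead a proof sketch of Theorem \ref{thm: limit is Lebesgue} (identification of the limit with $\lambda_\cM$ via Eskin--Mirzakhani and the Margulis-function machinery), and in your very first step you \emph{invoke} Theorem \ref{thm: solan gap main} as a black box to upgrade the limit to a $B$-invariant measure. Relative to the assigned statement this is circular: the one thing that needed an argument --- why near-maximal leafwise dimension forces $\tu$-invariance in the limit --- is exactly the thing you assume. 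Everything downstream (the decomposition $\sum_i\alpha_i\lambda_{\cM_i}$, the functions $f_{\cM'}$, the additive Margulis function, non-escape of mass) is irrelevant to the target theorem, since the target theorem makes no reference to affine manifolds, strata, or $\lambda_\cM$ at all.

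A proof of the actual statement would have to work at the level of the $\ta$-invariant measures themselves: for instance, via the Markov-chain description of Lemma \ref{lem: solan quantitive leafwise entropy}, where $\dim^\tu(\mu_n)\to 1$ forces the transition weights $p(x)$ to concentrate at $1/2$ on a set of $S\mu_n$-measure tending to $1$, so that in the weak-$*$ limit the conditional measures along $\tu$-leaves become translation-invariant, i.e.\ the limit is $\tu$-invariant. None of that reasoning appears in your write-up. One further point: as printed in this paper the theorem omits the hypothesis $\dim^\tu(\mu_n)\to 1$ (without it the claim is false --- take a constant sequence $\mu_n=\mu$ with $\mu$ an $\ta$-invariant, $\tu$-free, non-$\tu$-invariant measure); you correctly reinstated that hypothesis, and it is present wherever the paper applies the theorem, but reinstating a hypothesis is not the same as supplying a proof.
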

\begin{definition}
  Let $B\acts X$ be a continuous action on an LCSC space $X$, and $\mu\in \Prob(X)$. 
  Denote by $S_\mu = \int_{-1}^0 \tu(s).\mu \bd s$. 
\end{definition}
\begin{lemma}[Solan {{\cite[Lem. 3.10]{solan2024critical}}}]\label{lem: solan quantitive leafwise entropy}
  Let $B\acts X$ be a continuous action on an LCSC space $X$, and $\mu\in \Prob(X)$ an $\ta$-invariant and ergodic measure. 
  Then there is a function $p:X\to [0,1]$ such that
  \begin{align}
    \label{eq: operator entropy}
    \int_{X} p(x) \log \frac{1}{p(x)} + (1-p(x))\log \frac{1}{1-p(x)} \bd S\mu(x) & = \dim^u(\mu)\log 2,   \\
    \label{eq: operator stationary}
    \int_{X} \omega_x \bd S\mu (x)                             & = S\mu,
  \end{align}
  where \begin{align}\label{eq: def omega x}
    \omega_x = p(x)\delta_{\ta(\log 2).x} + (1-p(x))\delta_{\tu(1)\ta(\log 2).x}.
  \end{align}
\end{lemma}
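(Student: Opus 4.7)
The plan is to derive the existence of $p$ directly from a self-similarity identity for $S\mu$ under doubling by $\ta(\log 2)$. The central computation is that
\begin{equation*}
\ta(\log 2)_* S\mu = \tfrac{1}{2} S\mu + \tfrac{1}{2}\tu(-1)_* S\mu.
\end{equation*}
This follows from the conjugation relation $\ta(\log 2)\tu(s) = \tu(2s)\ta(\log 2)$ together with $\ta$-invariance of $\mu$: starting from $S\mu = \int_{-1}^0 \tu(s)_*\mu\,ds$ one gets $\ta(\log 2)_* S\mu = \int_{-1}^0 \tu(2s)_*\mu\,ds = \tfrac{1}{2}\int_{-2}^0 \tu(u)_*\mu\,du$, and splitting the interval $[-2,0]$ at $u=-1$ (and translating the left half by $\tu(-1)$) yields the claimed decomposition.

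In particular $\tfrac{1}{2}S\mu \le \ta(\log 2)_* S\mu$, so the Radon--Nikodym derivative $\alpha(y) = d(\tfrac{1}{2}S\mu)/d\,\ta(\log 2)_* S\mu(y)$ exists, lies in $[0,1]$, and correspondingly $(1-\alpha)\,d\ta(\log 2)_* S\mu = \tfrac{1}{2}\,d\tu(-1)_* S\mu$. I set $p(x) = \alpha(\ta(\log 2).x)$, which is defined for $S\mu$-a.e.\ $x$. To verify \eqref{eq: operator stationary}, I test against a bounded continuous $f$ and push-forward through $y = \ta(\log 2).x$. The first term becomes $\int \alpha(y)f(y)\,d\ta(\log 2)_*S\mu(y) = \tfrac{1}{2}\int f\,dS\mu$. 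The second term becomes $\int (1-\alpha(y))f(\tu(1).y)\,d\ta(\log 2)_*S\mu(y) = \tfrac{1}{2}\int f(\tu(1).y)\,d\tu(-1)_*S\mu(y) = \tfrac{1}{2}\int f\,dS\mu$, using $\tu(1)\tu(-1) = e$. Summing reproduces $S\mu$, as required.

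For \eqref{eq: operator entropy}, the same change of variable rewrites the integral as $\int H(\alpha(y))\,d\ta(\log 2)_*S\mu(y)$ where $H(t) = t\log\tfrac{1}{t} + (1-t)\log\tfrac{1}{1-t}$. I would then appeal to \cite[Def.~3.7]{solan2024critical}: the leafwise dimension $\dim^\tu(\mu)$ is defined, modulo normalization by $\log 2$, as precisely the Radon--Nikodym entropy of the natural Bernoulli decomposition $\ta(\log 2)_*S\mu = \alpha\,\ta(\log 2)_*S\mu + (1-\alpha)\,\ta(\log 2)_*S\mu$ obtained above.

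The hard part is matching this $H$-integral with $\dim^\tu(\mu)\log 2$ if the definition in \cite{solan2024critical} is not phrased in these exact terms. If $\dim^\tu(\mu)$ is introduced instead through Hausdorff dimensions of conditional measures along $\tu$-leaves, or through the entropy rate of a sequence of partitions adapted to $\tu$, then an equivalence argument is needed: iterate the self-similarity identity $n$ times to express $\ta(n\log 2)_*S\mu$ as a convex sum of $2^n$ translates of $S\mu$ indexed by $\{0,1\}^n$, recognize this as a symbolic coding of $S\mu$-generic orbits along the unstable direction, and apply Shannon--McMillan--Breiman to convert the averaged Shannon entropy $\int H(\alpha)\,d\ta(\log 2)_*S\mu$ into the pointwise local dimension. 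I expect this identification to be the main technical point; the construction of $p$ and the verification of \eqref{eq: operator stationary} are essentially formal consequences of the doubling identity.
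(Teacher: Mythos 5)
First, a point of order: this lemma is imported verbatim from \cite{solan2024critical} (Lem.~3.10) and the present paper contains no proof of it, so there is no in-paper argument to compare yours against; what follows assesses your proposal as a standalone proof.

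Your doubling identity $\ta(\log 2)_* S\mu = \tfrac12 S\mu + \tfrac12 \tu(-1)_* S\mu$ is correct (it follows exactly as you say from $\ta(\log 2)\tu(s)\ta(-\log 2) = \tu(2s)$ and $\ta$-invariance of $\mu$), the definition $p(x) = \alpha(\ta(\log 2).x)$ via the Radon--Nikodym derivative of $\tfrac12 S\mu$ against $\ta(\log 2)_*S\mu$ is well posed, and your verification of the stationarity equation \eqref{eq: operator stationary} is complete and correct. This is almost certainly the canonical construction of $p$, and that half of the lemma I would accept as proved.

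The gap is exactly where you flag it: equation \eqref{eq: operator entropy}. You reduce it to the claim that $\int H(\alpha)\,\bd(\ta(\log 2)_*S\mu)$ equals $\dim^\tu(\mu)\log 2$, but $\dim^\tu(\mu)$ is a black box here --- the present paper deliberately does not reproduce \cite[Def.~3.7]{solan2024critical} and only uses its properties as quoted. Your fallback sketch (iterate the doubling identity $n$ times to write $\ta(n\log 2)_*S\mu$ as a $2^n$-term convex combination, then invoke Shannon--McMillan--Breiman) is the right shape of argument, but as written it is not a proof: you have not shown that the $n$-step branching weights are the products of the one-step weights along the itinerary (i.e.\ that the chain generated by $x\mapsto\omega_x$ is genuinely Markov with respect to the relevant $\sigma$-algebra, rather than merely having $S\mu$ as a stationary measure), nor that the resulting entropy rate is the quantity Def.~3.7 calls $\dim^\tu(\mu)$ rather than, say, an inequality in one direction. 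Note also that \eqref{eq: operator stationary} alone does not determine $p$ uniquely when the images of $\ta(\log 2)$ and $\tu(1)\ta(\log 2)$ interact nontrivially on the support of $S\mu$, so the entropy identity genuinely depends on your having made the Radon--Nikodym choice; this should be said explicitly. In short: construction and stationarity, yes; the entropy identity remains contingent on the unstated definition and is not established by the proposal.
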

\begin{remark}[Interpretation of Lemma \ref{lem: solan quantitive leafwise entropy}]\label{rem: on Solan Markov chain}
    The map $x\mapsto \omega_x$ is a Markov chain, where the successor of $x$ distributes via $\omega_x$ and the measure $S\mu$ is stationary with respect to this Markov chain. 
\end{remark}

\begin{definition}[$(\varepsilon; T_0, T_1)$-additive Margulis function]\label{def: ep-additive Margulis function}
  Let $(X,\mu)$ be a probability measure space, $x\mapsto \nu_x$ a measurable map from $X$ to the space of probability measures on $X$ such that $\mu = \int_X\nu_x\bd \mu(x)$.
  In other words, $(X, x\mapsto \nu_x)$ is a Markov chain and $\mu$ is a stationary measure.
  A measurable function $\alpha: X \to [0,\infty)$ is called \emph{$(\varepsilon; T_0, T_1)$-additive Margulis function} for some $T_1> T_0>0$ large and $\varepsilon>0$ if the following conditions hold:
  \begin{enumerate}[label=M-\alph*), ref=(M-\alph*)]
    \item \label{cond: Lipshitz}For $\mu$-almost all $x\in X$, and for $\nu_x$-almost all $y\in X$, we have $\alpha(y) \in \alpha(x) + [-T_1, T_1]$.
    \item \label{cond: decay on averege}
          \begin{align}\label{eq: Margulis bound}
            \mu\left( \left\{ x\in X: T_1 \le \alpha(x) < T_0 + \int_X\alpha(y)\bd \nu_x(y)\right\} \right)<\varepsilon.
          \end{align}
  \end{enumerate}
\end{definition}
\begin{lemma}[Solan {{\cite[Lem. 5.3]{solan2024critical}}}]\label{lem: Margulis function goes down}
  In the setting of Definition \ref{def: ep-additive Margulis function},
  \begin{align*}
    \mu(\{x\in X:\alpha(x)\ge t\}) \le \frac{1}{\log \lfloor t / T_1\rfloor - 1} + \frac{T_0+T_1}{T_0}\varepsilon,
  \end{align*}
  for all $t\ge 3T_1$.
\end{lemma}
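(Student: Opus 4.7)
The plan is to exploit the stationarity of $\mu$ together with the drift condition \ref{cond: decay on averege} to produce the tail estimate, passing through the flux balance of the Markov chain across level boundaries $kT_1$.

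I would start by setting $K = \lfloor t/T_1\rfloor \ge 3$, writing $A_k = \{\alpha \in [kT_1,(k+1)T_1)\}$ and $L_k = \{\alpha \ge kT_1\}$ with $p_k = \mu(L_k)$, so that $\mu(\{\alpha\ge t\}) \le p_K$. By the Lipschitz property \ref{cond: Lipshitz}, a single chain step changes $\alpha$ by at most $T_1$, so $\nu_x$ is supported in the band $\alpha^{-1}(\alpha(x) + [-T_1,T_1])$, and in particular the upward flux across the boundary $(k+1)T_1$ can come only from the layer $A_k$. Setting
\[
F_{k+1} \;:=\; \int_{A_k}\!\nu_x(L_{k+1})\,d\mu(x),\qquad D_{k+1} \;:=\; \int_{A_{k+1}}\!\nu_x(\{\alpha<(k+1)T_1\})\,d\mu(x),
\]
the stationarity $\mu = \int\nu_x\,d\mu(x)$ forces the flux balance $F_{k+1} = D_{k+1}$, giving a family of quantities to work with in a uniform way.

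For $x$ in the good set $G = \{\alpha \ge T_1\}\setminus B$ with $\alpha(x) \in A_k$ and $k \ge 1$, I would apply a one-sided first-moment argument: decomposing $\nu_x$ into its restrictions to $\{\alpha<kT_1\}$, $A_k$, and $L_{k+1}$ with respective masses $p,q,r$, the support restriction gives the lower bound $\int\alpha\,d\nu_x \ge kT_1 + T_1(r - p)$, while the drift condition supplies the upper bound $\int\alpha\,d\nu_x \le \alpha(x) - T_0 < (k+1)T_1 - T_0$. Comparing the two yields the pointwise inequality $\nu_x(L_{k+1}) \le \nu_x(\{\alpha < kT_1\}) + 1 - T_0/T_1$ on $A_k\cap G$. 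Integrating over $A_k$ and splitting off the bad set $B$ (where only the trivial bounds $\nu_x(L_{k+1}),\nu_x(\{\alpha<kT_1\}) \in [0,1]$ are available), then using the flux balance, produces the recursion
\[
F_{k+1} - F_k \;\le\; (1 - T_0/T_1)\,\mu(A_k) + (T_0/T_1)\,\mu(A_k\cap B).
\]

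The main obstacle, and the step I expect to require the most care, is converting this additive recursion into the logarithmic tail rate. A direct telescope only yields the uniform bound $F_K = O(1) + O(\varepsilon)$, and the naive substitution $\phi(\alpha) = \log_{+}(\alpha/T_1)$ into the stationarity identity $\int\phi\,d\mu = \int\!\!\int\phi(\alpha(y))\,d\nu_x\,d\mu$ similarly gives only control on $\int d\mu/\alpha$, which is insufficient for upper tails. The correct strategy is to average the recursion over dyadic windows $k \in [2^m, 2^{m+1})$: within each window the telescope of the flux inequality, combined with the identity $\mu(A_k) = p_k - p_{k+1}$, yields an $O(1)$ constraint tying $p_{2^m}$ to $p_{2^{m+1}}$, and summing over the roughly $\log_2 K$ windows below level $K$ produces the factor $\log K$ in the denominator of the final bound. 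The coefficient $(T_0+T_1)/T_0$ multiplying $\varepsilon$ then emerges from rebalancing the drift magnitude $T_0$ against the Lipschitz jump size $T_1$ in the bad-set contributions when passing from the flux quantities $F_k$ back to the level probabilities $p_k$, and the constant $-1$ in $\log\lfloor t/T_1\rfloor - 1$ absorbs boundary effects at the base level $k = 1$ and at the truncation level $k = K$. This dyadic averaging is the core technical content of \cite[Lem.~5.3]{solan2024critical}, on whose proof I would model the argument.
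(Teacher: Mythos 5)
Your preliminary steps are correct: the flux balance $F_{k+1}=D_{k+1}$, the one-sided moment bound $\nu_x(L_{k+1})\le\nu_x(\{\alpha<kT_1\})+1-T_0/T_1$ on $A_k\cap G$, and the recursion $F_{k+1}-F_k\le(1-T_0/T_1)\mu(A_k)+(T_0/T_1)\mu(A_k\cap B)$ all follow from \ref{cond: Lipshitz}, \ref{cond: decay on averege} and stationarity. The proof, however, is missing at exactly the point where you defer to ``dyadic averaging.'' Telescoping your recursion over $k\in[2^m,2^{m+1})$ gives $F_{2^{m+1}}\le F_{2^m}+(1-T_0/T_1)(p_{2^m}-p_{2^{m+1}})+(T_0/T_1)\mu(B\cap\{2^mT_1\le\alpha<2^{m+1}T_1\})$, which is again only an $O(1)$ statement about fluxes; it does not produce any constraint of the form $1/p_{2^{m+1}}\ge 1/p_{2^m}+c$, which is what you would need to sum over $\log_2 K$ windows and obtain a $1/\log$ tail. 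You assert that the mechanism exists (``is the core technical content of \cite[Lem.~5.3]{solan2024critical}'') without exhibiting it, so the central step of the proof is absent.

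More seriously, the gap cannot be closed from the inequalities you have derived, because conditions \ref{cond: Lipshitz} and \ref{cond: decay on averege} as stated do not imply the asserted bound when $T_0/T_1$ is small. Take $X=\{0,1,2,\dots\}$, $\alpha(k)=kT_1$, $T_0=\delta T_1$, and the birth--death kernel $\nu_0=\delta_1$, $\nu_k=\tfrac{1+\delta}{2}\delta_{k-1}+\tfrac{1-\delta}{2}\delta_{k+1}$ for $k\ge1$, with its stationary measure $\mu$. Jumps are exactly $\pm T_1$ and $\int\alpha\,d\nu_k=\alpha(k)-T_0$ for every $k\ge1$, so the bad set is empty and $\varepsilon$ may be taken arbitrarily small; yet $\mu$ is geometric with ratio $\rho=(1-\delta)/(1+\delta)$ above level $1$, so $\mu(\{\alpha\ge KT_1\})=\rho^{K-1}/(1+\delta)$, which for $\delta=0.01$ and $K=20$ equals about $0.68$, exceeding $1/(\log 20-1)\approx 0.50$. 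Since this example satisfies the hypotheses, it satisfies every consequence of them, and in particular every inequality in your outline; no rearrangement of those inequalities can therefore yield the stated conclusion. Evidently the precise formulation in \cite{solan2024critical} must carry either a factor of order $T_1/T_0$ in the leading term or a largeness requirement on $t$ relative to $T_1e^{T_1/T_0}$ (either of which is harmless for the application in Section~\ref{sec: isolation}); you should work from that exact statement rather than reconstructing the final step by analogy.
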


\subsection{Proof of Theorem \ref{thm: limit is Lebesgue}}
  Without loss of generality restrict $\mu_n$ to a subsequence such that the sequence has a weak-$*$ limit $\mu_\infty$. 
  Since no point in $\cH(\vec k)$ is invariant under the $\tu$-action, we deduce that every $\mu_n$ is $\tu$-free.
  By Theorem \ref{thm: solan gap main}, we deduce that $\mu_\infty$ is $\tu$-invariant. Hence, by Theorem \ref{thm: Eskin Mirzakhani}, $\mu_\infty = \sum_i \alpha_i \lambda_{\cM_i}$ for some collection $(\cM_i)_i$ of affine submanifolds of $\cM$ and $(\alpha_i)_i$ with $\sum_i \alpha_i \le 1$. 
  Assume to the contrary that this ergodic decomposition has some $\cM_i\subsetneq \cM$ with $\alpha_i > 0$. 
  
  Let $f_{\cM_i}:\cM\to [1,\infty]$ as in Proposition \ref{prop: Eskin Mirzakhani Mohammadi technical}.
  We wish to relate Eq. \eqref{eq: strict Margulis} to the Markov chain used in Lemma \ref{lem: solan quantitive leafwise entropy}. 
  \begin{claim}\label{claim: strict Margulis different op}
    Let $\cM\subsetneq \cH(\vec k)$ be an affine manifold and $f_\cM$ as in Proposition \ref{prop: Eskin Mirzakhani Mohammadi technical}. Then there is $b'$ such that for every $c\in (0,1)$ there is $\ell_c>0$ such that for every $\ell\ge \ell_c$ we have 
    \begin{align}\label{eq: strict Margulis different op}
      \frac{1}{2^\ell}\sum_{i=0}^{2^\ell-1} f_\cM(\tu(i)\ta(\ell\log 2).x) \le c f_\cM(x) + b'. 
    \end{align}
  \end{claim}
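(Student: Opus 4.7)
The plan is to derive the claim from the $SO(2)$-Margulis inequality \ref{point:Margulis} by combining a Riemann-sum-to-integral comparison with a change-of-variables argument based on the KAN decomposition. Throughout, set $T := \ell\log 2$, so $e^T = 2^\ell$. The commutation relation $\ta(s)\tu(v) = \tu(ve^s)\ta(s)$ gives $\tu(i)\ta(T).x = \ta(T)\tu(i/2^\ell).x$, and for $s \in [i/2^\ell, (i+1)/2^\ell]$ property \ref{point: Lipschitz} applied to $K_0 := \{\tu(v) : v \in [0,1]\}$ yields
\[\frac{1}{2^\ell}\sum_{i=0}^{2^\ell-1}f_\cM(\tu(i)\ta(T).x) \le \sigma_{K_0}\int_0^1 f_\cM(\ta(T)\tu(s).x)\,ds = \sigma_{K_0}\,e^{-T}\int_0^{e^T}f_\cM(\tu(\sigma)\ta(T).x)\,d\sigma.\]

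It remains to bound $e^{-T}\int_0^{e^T}f_\cM(\tu(\sigma)\ta(T).x)\,d\sigma$ using the $SO(2)$-average. The KAN decomposition reads $\ta(T)r_\theta = r_{\phi(\theta)}\,\ta(2\log\rho(\theta))\,\tu(u(\theta))$ with $\rho^2 = e^T\cos^2\theta + e^{-T}\sin^2\theta$ and $u = -\sin\theta\cos\theta(e^T-e^{-T})/\rho^2$, so by the $SO(2)$-invariance of $f_\cM$ and the commutation one has $f_\cM(\ta(T)r_\theta.x) = f_\cM(\tu(u\rho^2)\ta(2\log\rho).x)$. Setting $\sigma = u\rho^2$ and $\epsilon = 2\log\rho - T$, the identity $\tu(\sigma)\ta(T+\epsilon) = \ta(\epsilon)\tu(\sigma e^{-\epsilon})\ta(T)$ together with the cancellation $\sigma e^{-\epsilon} = u\rho^2\cdot e^T/\rho^2 = ue^T$ rewrites this as $f_\cM(\ta(T)r_\theta.x) = f_\cM(\ta(\epsilon)\tu(ue^T)\ta(T).x)$. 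For $\theta \in [-\pi/4, \pi/4]$ one has $\epsilon(\theta) \in [-\log 2, 0]$, so property \ref{point: Lipschitz} with $K_1 := \{\ta(s) : s \in [-\log 2, 0]\}$ gives $f_\cM(\ta(T)r_\theta.x) \ge \sigma_{K_1}^{-1}f_\cM(\tu(ue^T)\ta(T).x)$; the substitution $\sigma' = ue^T$ (with $u \to -\tan\theta$ as $T\to\infty$ and Jacobian $|d\theta/d\sigma'|$ bounded below by $1/(2e^T)$ on the subrange $|\sigma'| \le e^T$) yields
\[\int_{-\pi/4}^{\pi/4}f_\cM(\ta(T)r_\theta.x)\,d\theta \ge \frac{1}{2\sigma_{K_1}e^T}\int_{-e^T}^{e^T}f_\cM(\tu(\sigma')\ta(T).x)\,d\sigma'.\]

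Finally, $\int_{-\pi/4}^{\pi/4}\le\int_0^{2\pi}$ combined with \ref{point:Margulis} at constant $c' := c/(4\pi\sigma_{K_0}\sigma_{K_1})$ (which requires $T \ge t_{c'}$) gives $e^{-T}\int_0^{e^T}f_\cM(\tu(\sigma)\ta(T).x)\,d\sigma \le (c/\sigma_{K_0})f_\cM(x) + 4\pi\sigma_{K_1}b$; plugging into the Riemann-sum bound above yields the claim with $b' := 4\pi\sigma_{K_0}\sigma_{K_1}b$ and $\ell_c := \lceil t_{c'}/\log 2\rceil$. I expect the main subtlety to be the algebraic cancellation $\sigma e^{-\epsilon} = ue^T$: it eliminates the $\rho$-dependence of the $\tu$-parameter and is precisely what makes the $\ta(\epsilon)$ error absorbable by the Lipschitz property, whereas a direct comparison via conjugation would fail because $\tu(\sigma)\ta(\epsilon)\tu(\sigma)^{-1}$ is unbounded as $|\sigma|\to\infty$.
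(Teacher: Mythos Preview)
Your approach is the same as the paper's: compare the discrete sum to $\int_0^1 f_\cM(\ta(T)\tu(s).x)\,ds$ via the Lipschitz property, then relate this horocycle integral to the rotational average, and finally invoke Point~\ref{point:Margulis}. The paper implements the middle step differently: it writes $\tu(s)=L(s)r_{\beta(s)}$ with $\beta(s)=-\arctan s$ and $L(s)$ \emph{lower triangular}, so that $\ta(T)\tu(s)=\big(\ta(T)L(s)\ta(-T)\big)\ta(T)r_{\beta(s)}$; the point is that $\{\ta(t)L(s)\ta(-t):s\in[0,1],\,t\ge0\}$ is relatively compact (the strictly lower-triangular entry is contracted), which gives the comparison with a single application of \eqref{eq: Lipschitz} and no Jacobian computation. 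Your KAN decomposition of $\ta(T)r_\theta$ together with the algebraic identity $\sigma e^{-\epsilon}=ue^T$ arrives at the same place, just from the other side.

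There is one small gap in your write-up. The substitution $\sigma'=u(\theta)e^T$ on $\theta\in[-\pi/4,\pi/4]$ has image exactly $[-e^T\tanh T,\,e^T\tanh T]$, not $[-e^T,e^T]$: at $\theta=\mp\pi/4$ one has $u=\pm\tanh T$. So the displayed inequality with right-hand side $\int_{-e^T}^{e^T}$ is not literally correct; your change of variables only controls $\int_0^{e^T\tanh T}$, missing an interval of length $e^T(1-\tanh T)=2/(e^T+e^{-T})$. This is easy to fix (enlarge the $\theta$-range to, say, $[-\pi/3,\pi/3]$ so the image covers $[0,e^T]$ with room to spare, and absorb the resulting $\epsilon\in[-\log 4,0]$ into $K_1$), but as written the inequality does not follow. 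The paper's lower-triangular route sidesteps this issue entirely, which is its main advantage over your more computational version.
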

  \begin{proof}
    Fix $x\in \cH(\vec k)$. 
    Denote by 
    \begin{align*}
      S_0(x,\ell) &= \frac{1}{2^\ell}\sum_{i=0}^{2^\ell-1} f_\cM(\tu(i)\ta(\ell\log 2).x), \qquad \text{and}\\
      S_1(x,\ell) &= \frac{1}{2^\ell}\sum_{i=0}^{2^\ell-1} \int_0^1 f_\cM(\tu(i+s)\ta(\ell\log 2).x)\bd s = \int_0^1 f_\cM(\ta(\ell\log 2)\tu(s).x) \bd s.
    \end{align*}
    Let $K_1 = \{\tu(s): s\in [0,1]\}$. 
    By Eq. \eqref{eq: Lipschitz}, we obtain $S_0(x,\ell) \le \sigma_{K_1}S_1(s)$. 
    For every $s\in [0,1]$ denote $\beta(s) = -\arctan s$. 
    Then $L(s) := \tu(s)r_{-\beta(s)}$ is lower triangular. 
    Note that 
    \[\ta(\ell\log 2)\tu(s).x 
    = \ta(\ell\log 2)L(s)r_{\beta(s)}.x 
    = (\ta(\ell\log 2)L(s)\ta(-\ell\log 2))\ta(\ell\log 2)r_{\beta(s)}.x\]
    Let $K_2 = \overline{\{\ta(t)L(s)\ta(-t):s\in [0,1], t\ge 0\}}$, and note that it is compact and contains $\ta(\ell\log 2)L(s)\ta(-\ell\log 2)$. 
    Hence 
    \begin{align*}
      S_1(x,\ell) &= 
      \int_0^1 f_\cM(\ta(\ell\log 2)\tu(s).x) \bd s \\&= 
      \int_0^1 f_\cM((\ta(\ell\log 2)L(s)\ta(-\ell\log 2))\ta(\ell\log 2)r_{\beta(s)}.x) \bd s 
      \\&\le \sigma_{K_2}\int_0^1 f_\cM(\ta(\ell\log 2)r_{\beta(s)}.x)\bd s
    \end{align*}
    Note that $\sup_{[0,1]}\frac{1}{\arctan'} = 2$. Then 
    \begin{align*}
      \int_0^1 f_\cM(\ta(\ell\log 2)r_{\beta(s)}.x)\bd s 
    &\stackrel{\theta=\beta(s)}{=} \int_{\arctan 0}^{\arctan 1} f_\cM(\ta(\ell\log 2)r_{\theta}.x) \frac{1}{\arctan'(\tan\theta)}\bd \theta
    \\&\le 2 \int_0^{2\pi} f_\cM(\ta(\ell\log 2)r_{\theta}.x) \bd \theta.
    \end{align*}
    Altogether, if we set $C = 4\pi\sigma_{K_1}\sigma_{K_2}$, we obtain 
    \[S_0(x,\ell) \le \frac{C}{2\pi}\int_{0}^{2\pi} f_\cM(\ta(\ell\log 2)r_{\theta}.x)\bd \theta.\]
    The desired Inequality \eqref{eq: strict Margulis different op} follows with $n_c = t_{c/C}/\log 2$ and $b' = Cb$, where $t_{c/C}$ and $b$ are as in \ref{prop: Eskin Mirzakhani Mohammadi technical} Point \ref{point:Margulis}.
  \end{proof}
  We will now relate the Markov operator studied in Claim \ref{claim: strict Margulis different op} to the one provided by Lemma \ref{lem: solan quantitive leafwise entropy}. 
  Let $n$ be a sufficiently large index such that $\mu_n(\cM_i) = 0$. Let $\ell = \ell_{1/8} + 1$ be a large integer and $\delta>0$ be a small real number to be determined later.
  Apply Lemma \ref{lem: solan quantitive leafwise entropy} to $\mu_n$ and obtain a Markov chain 
  $x\mapsto \omega_{n,x}$.
  One can iterate this Markov chain and obtain a new Markov chain \[x\mapsto \omega_{n,x}^\ell = \sum_{i=0}^{2^\ell-1} p_{n,i}^\ell(x) \delta_{\tu(i)\ta(\ell\log 2).x},\]
  where $\sum_{i=0}^{2^\ell-1}p_{n,i}^\ell(x) = 1$, see \cite[Def. 5.11]{solan2024critical} for more details. 
  The entropy condition of Lemma \ref{lem: solan quantitive leafwise entropy}
  implies that \[\int_{\cM\setminus \cM_i} \sum_{i=0}^{2^\ell-1} p_{n,i}^\ell(x) \log \frac{1}{p_{n,i}^\ell(x)} \bd S\mu_n(x) = \ell \dim^u(\mu_n)\log 2.\]
  Hence (see \cite[Obs. 5.12]{solan2024critical} for more details), we deduce that if $n$ is sufficiently big such that $\dim^u(\mu_n)$ is sufficiently close to $1$, there is a set $X_{\rm good}^{n,\ell}\subseteq \cM\setminus \cM_i$ such that $S\mu_n(X_{\rm good}^{n,\ell}) \ge 1-\delta$ and for every $x\in X_{\rm good}^{n,\ell}$ and for every $i=0,1,\dots,2^{\ell}-1$ we have $\left| p_{n,i}^\ell(x) - 2^{-\ell}\right| \le \delta$. 

  We will use the machinery of $(\varepsilon; T_0, T_1)$-additive Margulis function \ref{def: ep-additive Margulis function}. 
  The measure space is $(\cM \setminus \cM_i, S\mu_n)$. 
  The Markov chain is $x\mapsto \omega_{n,x}^\ell$, and it preserves $S\mu_n$. The additive Margulis function is $\alpha = \log \circ  f_{\cM_i}$. 
  We will find $\varepsilon, T_0, T_1$ such that $\alpha$ is $\varepsilon, T_0, T_1$-additive Margulis function. 
  Indeed, if $T_1 \ge \log \sigma_{\{\ta(\ell\log 2)\tu(s): s\in [0,1]\}}$ then Condition \ref{cond: Lipshitz} holds, where $\sigma_\bullet$ is as in Proposition \ref{prop: Eskin Mirzakhani Mohammadi technical} Point \ref{point: Lipschitz}. 
  To show Condition \ref{cond: decay on averege}, let $x\in X_{\rm good}^{n,\ell}$, with $\alpha(x) > T_1$. 
  Then 
  \begin{align*}
    \int_X\alpha(y)\bd \omega_{n,x}^\ell(y)
    &= 
    \sum_{i=0}^{2^\ell-1}p_{n}^\ell(x)\alpha(\tu(i)\ta(\ell\log 2).x)
    \\&\leftstackrel{x\in X_{\rm good}^{n,\ell}}\le
    \sum_{i=0}^{2^\ell-1}2^{-\ell}(x)\alpha(\tu(i)\ta(\ell\log 2).x) + 2^\ell\delta T_1
    \\&\leftstackrel{\text{log is concave}}\le
    \log\left(\sum_{i=0}^{2^\ell-1}2^{-\ell}(x)f_{\cM_i}(\tu(i)\ta(\ell\log 2).x)\right) + 2^\ell\delta T_1
    \\&\leftstackrel{\ref{claim: strict Margulis different op} + \ell>\ell_{1/8}}\le
    \log\left(\frac{1}{8} f_{\cM_i}(x) + b'\right) + 2^\ell\delta T_1
    \\&\le 
    \log\left(\frac{1}{8} f_{\cM_i}(x)(1+b'/T_1)\right) + 2^\ell\delta T_1
    \\&=\alpha(x) + \log \frac{1}{8} + \log (1+b'/T_1) + 2^\ell\delta T_1.
  \end{align*}
  Set \[T_1 = \max(b', \log \sigma_{\{\ta(\ell\log 2)\tu(s): s\in [0,1]\}}, \log 2).\] 
  Then 
  \[\int_X\alpha(y)\bd \omega_{n,x}^\ell(y) \le \alpha(x) - 2\log 2 + 2^\ell\delta T_1. \]
  Assume that $\delta \le \frac{\log 2}{2^\ell T_1}$. 
  Then we obtain 
  \[\int_X\alpha(y)\bd \omega_{n,x}^\ell(y) \le \alpha(x) - \log 2.\]
  Setting $T_0 = \log 2$ ensures that the set estimated in Eq. \eqref{eq: Margulis bound} is disjoint from $X_{\rm good}^{n,\ell}$, and hence its measure is at most $\delta$. 
  Altogether, we have proved that $\alpha$ is a $(\delta; \log 2, T_1)$-additive Margulis function. 
  
  We can now apply Claim \ref{lem: Margulis function goes down}, and obtain that 
  \[S\mu(\{x\in \cM \setminus \cM_i:\alpha(x)\ge t\})\le \frac{1}{\log \lfloor t / T_1\rfloor - 1} + \frac{\log2 +T_1}{\log 2}\delta,\]
  for every $t\ge 0$.

  Taking $\delta>0$ sufficiently small and $t>0$ sufficiently large, we obtain that 
  \[S\mu_n (\{x\in \cM\setminus \cM_i: \alpha(x) \ge t\}) \le  \alpha_i/2.\]
  Hence 
  \[S\mu_n (\overline{\{x\in \cM\setminus \cM_i: \alpha(x) \le t\}}) \ge 1- \alpha_i/2.\]
  By Proposition \ref{prop: Eskin Mirzakhani Mohammadi technical} Point \ref{point: function proper} the set  \[\{x\in \cM\setminus \cM_i: \alpha(x) \le t\} = \{x\in \cM: f_{\cM_i}(x) \le e^t\},\] is compact.
  However, this contradicts the limit $\mu_\infty = S\mu_\infty = \lim_{n\to \infty}S\mu_n$ giving $\cM_i$ measure $\alpha_i$. 
  Hence $\mu_\infty$ has no components that are the Lebesgue measure on an affine manifold strictly contained in $\cM$, and hence $\mu_\infty =\alpha_0\lambda_{\cM}$. 
  If $\alpha_0 < 1$ we can run a similar argument with $\emptyset$ instead of $\cM_i$ and obtain a contradiction. 
  Hence $\mu_\infty = \lambda_{\cM}$, as desired.

\section{Completion of the proof}
\label{sec: end of proof}

  

In this section we prove Theorem \ref{thm: main}, using the Impossible Section Method \ref{prop: impossible section method1}. The section will be divided into $3$ main parts. In Subsection \ref{ssec: work in H11} we prove Theorem \ref{thm: main} in the strate $\cH(1,1)$. In the Subsection \ref{ssec: proof M large} we push the proof to a greater generality: we obtain conditions on orbits that enables us to use the impossible section method. These conditions may fail. However, precisely when these conditions fail, we show that if the stabilizer is Zariski dense, then it is a lattice. This is done in Subsection \ref{ssec: arithmeticity} using methods of McMullen \cite{mcmullen2003teichmuller,mcmullen2003genus2dynamics}. 

Let $g\ge 2$, and consider the action $\SL_2(\RR)\acts \cH_g$. 
Assume that $(\SL_2(\RR).x_i)_{i=1}^\infty$ is an infinite sequence of orbits, with $\delta(\stab_{\SL_2(\RR)}(x_i))\to 1$, and $\stab_{\SL_2(\RR)}(x_i)$ is never a lattice.  
Without loss of generality, we may assume that $\delta(\stab_{\SL_2(\RR)}(x_i))$ is always positive, and in particular, $\stab_{\SL_2(\RR)}(x_i)$ is Zariski dense. 
Let $\cM$ be a minimal affine manifold that contains infinitely many $x_i$. Restrict to subsequence and assume that $x_i\in \cM$ for all $i\ge 1$. 
We may proceed with the Impossible Section Method \ref{prop: impossible section method1}. 
For each $i\ge 1$ let $\mu_i \in \Prob(\SL_2(\RR).x_i)$ be the $\ta$-invariant and ergodic probability measure satisfying $h_{\mu_i}(\ta(1)) \to 1$. This collection of measures is provided by the Impossible Section Method \ref{prop: impossible section method1}. 
By \cite[Thm~7.6 (ii)]{einsiedler2010diagonal}, the entropy coincides with the leafwise dimension,
\begin{align}
  h_{\mu_i}(\ta(1)) = \dim^\tu(\mu_i) \to 1.
\end{align}
By Theorem \ref{thm: limit is Lebesgue}, we deduce that we have a weak-$*$ convergence to the affine probability measure $\lambda_\cM$. 
This completes Part \ref{part: equidistribution1} of the method.
It is left to prove Parts \ref{part: lifts to projective1} and \ref{part: no section1}. 
We begin by showing an explicit argument for $g=2$ and $\cM\subseteq \cH^1(1,1)$.

\subsection{Proof in the strata \texorpdfstring{$\cH(1,1)$}{H(1,1)}}
\label{ssec: work in H11}
We will work with the following bundles on $\cH^1(1,1)$:
\begin{definition}
  Let $x = (S,\omega) \in \cH^1(1,1)$ be a flat surface. 
  We can consider the homology group $H_1(x;\ZZ) := H_1(S;\ZZ)$. 
  Varying $x$ defines a locally constant bundle $H_1(-;\ZZ)$ on $\cH^1(1,1)$. 
  Let $\Sigma\subseteq S$ denote the set of zeros of $\omega$. 
  One can consider the relative homology $H_1^\rel(x;\ZZ) = H_1^\rel(S,\Sigma;\ZZ)$. 
  Varying $x$ defines another locally constant bundle $H_1^\rel(-;\ZZ)$ on $\cH^1(1,1)$. 
  We will also consider both homology groups with coefficients in $\QQ$ and in $\RR$ as well.
  Let $p^*:H_1(x; \ZZ) \to H_1^\rel(x; \ZZ)$ denote the inclusion. Varying $x$ defines an inclusion of bundles
  \begin{align}
    \label{eq: short exact seq}
    \xymatrix{H_1(-;\ZZ) \ar@{^{(}->}[r]^{p^*} & H_1^\rel(-;\ZZ).}
  \end{align}
  Let $x\in \cH(1,1)$ and $g\in \SL_2(\RR)$. The definition of the action $g.x$ we obtain an homeomorphism of the underlying spaces of $x$ and $g.x$.
  This give a bijection $g_*:H_1(x;\ZZ)\to H_1(g.x;\ZZ)$, and a similar definition for the relative homology and other coefficient rings.

  Define a map $\tau_x: H_1^\rel(x; \RR)\to \RR^2$ by sending a class $[\alpha]$, where either $\alpha:[0,1]\to S$ is either a loop or $\partial\alpha \subseteq \Sigma$, to $\int_\alpha \omega$. 
  The map $\tau_x$ satisfies that for every $\alpha\in H_1^\rel(x; \RR)$ and $g\in \SL_2(\RR)$ we have $\tau_{g.x}(g_* \alpha) = g.\tau_x(\alpha)$, or in a diagram, the following commutes.
  \begin{align}\label{eq: g equivarince of pi}
    \xymatrix{H_1^\rel(x; \RR)\ar[r]^{g_*}\ar[d]^{\tau_x} &H_1^\rel(g.x; \RR)\ar[d]^{\tau_{g.x}} \\
              \RR^2\ar[r]^{g.-}&\RR^2.
    }
  \end{align}
  Recall the exact sequence of relative homology
  \begin{align*}
    \xymatrix{H_1(S;\ZZ) \ar@{^{(}->}[r]^{p^*} & H_1^\rel(S, \Sigma;\ZZ)\ar[r]& H^0(\Sigma).}
  \end{align*}
  By replacing $\cH(1,1)$ by an the finite cover in which the zeroes $\Sigma$ of the form are labeled, we deduce that ${\rm coker}(p^*)$ is the constant bundle.
\end{definition}
\begin{obs}
  Let $a$ be as in Theorem \ref{thm: McMullen} and $K_a = \QQ[a]$ be a number field. 
  For every flat surface $S(a)$, the images $\tau_{(S(a), \bd z)}(H_1^\rel(S(a); \ZZ))$ and $\tau_{(S(a), \bd z)}(H_1(S(a); \ZZ))$ lie in $K_a^2$ and are  $\ZZ$-modules of rank $4$. 
  In particular, $\tau_{(S(a), \bd z)}(H_1^\rel((S(a), \bd z); \QQ)) = \tau(H_1(S(a); \QQ)) = K_a^2$.
\end{obs}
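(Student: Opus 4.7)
The observation consists of three claims: $(i)$ both images lie in $K_a^2$, $(ii)$ both are $\ZZ$-modules of rank $4$, and $(iii)$ consequently their $\QQ$-spans equal $K_a^2$. I would organize the plan around these in order.

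The plan for $(i)$ is to use the explicit polygonal presentation of $S(a)$ from Figure \ref{fig: Sa}. Every edge of the polygon is parallel to a coordinate axis and has length in $\{1,a,1+a\}\subseteq K_a$, so the vector $\int_\alpha \bd z \in \RR^2$ associated to any oriented edge $\alpha$ lies in the $\ZZ$-module generated by $1,\,i,\,a,\,ai \in K_a^2$. Every class in $H_1^\rel(S(a);\ZZ)$ can be represented by a path in the $1$-skeleton of the polygon, i.e.\ a $\ZZ$-linear combination of oriented edges; hence $\tau_{(S(a),\bd z)}$ sends this class into the $\ZZ$-module generated by the edge vectors, and in particular into $K_a^2$. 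The same conclusion holds a fortiori for the subgroup $H_1(S(a);\ZZ)\subseteq H_1^\rel(S(a);\ZZ)$.

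For $(ii)$, first note that $K_a^2$ has $\QQ$-dimension $4$ since $a$ is a quadratic irrational (the defining equation of $a$ in Theorem \ref{thm: McMullen} is quadratic with irrational root). Thus any subgroup of $K_a^2$ has $\ZZ$-rank at most $4$, giving the upper bound for both images. For the lower bound, I would exhibit four absolute cycles in $H_1(S(a);\ZZ)$ whose $\tau$-images are $\QQ$-linearly independent. The natural candidates are the core curves of the horizontal and vertical cylinder decompositions of $S(a)$: the horizontal cylinder cutting through the middle square has circumference $1+a$, the horizontal cylinder crossing the unit square and the lower part of the middle square has some circumference in $\ZZ+\ZZ a$, and analogously two vertical cylinders give purely imaginary periods in $i(\ZZ+\ZZ a)$. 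Choosing a horizontal pair with $\QQ$-linearly independent circumferences (e.g.\ real parts $1$ and $a$, or $1$ and $1+a$) and a vertical pair similarly, one obtains four periods spanning a $4$-dimensional $\QQ$-subspace of $K_a^2 = K_a \oplus i K_a$, since the horizontal and vertical contributions lie in the orthogonal $\QQ$-summands $K_a$ and $iK_a$ respectively.

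For $(iii)$, sandwiching $\tau(H_1(S(a);\ZZ)) \subseteq \tau(H_1^\rel(S(a);\ZZ)) \subseteq K_a^2$ with the outer two having $\QQ$-dimension $4$ forces both middle groups to have rank exactly $4$ and $\QQ$-span equal to $K_a^2$.

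The main obstacle I anticipate is step $(ii)$: verifying that the proposed horizontal/vertical cores really are closed loops on the glued surface (rather than saddle connections between singularities) and computing their circumferences correctly requires tracking the opposite-edge identifications of the L-shaped polygon. This is a concrete combinatorial check in $\cH(1,1)$: one decomposes $S(a)$ into its maximal horizontal (resp.\ vertical) cylinders and reads off the core holonomies from the side-length data of Figure \ref{fig: Sa}. Once this decomposition is written down, the $\QQ$-linear independence of the four resulting periods is immediate from the irrationality of $a$.
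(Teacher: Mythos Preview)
The paper does not supply a proof of this observation; it is stated as self-evident from the polygonal description in Figure~\ref{fig: Sa} and then immediately generalized in Proposition~\ref{prop: general tau eq}. Your proposal is therefore more detailed than anything the paper offers, and the overall strategy is correct: read off from the polygon that all edge holonomies lie in $K_a^2$, bound the rank above by $\dim_\QQ K_a^2 = 4$, and exhibit four absolute cycles with $\QQ$-independent periods to bound it below.

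One small comment on step $(ii)$: since $S(a)$ has genus $2$, you already know $H_1(S(a);\ZZ)\cong \ZZ^4$, so the issue is precisely whether $\tau$ collapses any nonzero integral class. Your cylinder-core argument is the natural way to rule this out, and the concern you flag---checking that the cores are genuine closed curves and computing their circumferences---is the only real work. For this specific polygon the horizontal direction decomposes into cylinders of circumferences $2+a$ (through the bottom of the left and middle squares) and $1+2a$ (through the top of the middle and right squares), and similarly for the vertical direction; since $a\notin\QQ$ these are $\QQ$-independent, and together with their vertical counterparts you get four periods spanning $K_a\oplus iK_a$. Once you record those circumferences explicitly the argument is complete.
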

The same holds in more generality:
\begin{proposition}\label{prop: general tau eq}
  Let $x_0\in \cH^1(1,1)$ satisfy that $\stab_{\SL_2(\RR)}(x_0)$ is Zariski dense in $\SL_2(\RR)$ but is not a lattice. 
  Then \[\tau_{x_0}(H_1^\rel(x_0; \QQ)) = \tau_{x_0}(H_1(x_0; \QQ)) \subset \RR^2,\] is a $\QQ$-vector space of $\QQ$-dimension $4$.
\end{proposition}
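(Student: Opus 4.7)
The proof falls into two parts: first, to show $\dim_{\QQ} V_{\QQ} = 4$ where $V_{\QQ} := \tau_{x_0}(H_1(x_0; \QQ))$; and second, to deduce $\tau_{x_0}(H_1^{\rel}(x_0; \QQ)) = V_{\QQ}$.

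For the first part, Zariski density of $\stab := \stab_{\SL_2(\RR)}(x_0)$ in $\SL_2(\RR)$ yields a hyperbolic element $\gamma \in \stab$, and the induced symplectic automorphism $\gamma_*$ of $H_1(x_0; \QQ)$ has the hyperbolic eigenvalues $\lambda^{\pm 1}$ of $\gamma$ among its eigenvalues by the equivariance \eqref{eq: g equivarince of pi}. I appeal to McMullen's classification of $\SL_2(\RR)$-orbits in $\cH(1,1)$ with hyperbolic stabilizer \cite{mcmullen2003genus2dynamics}: since $\stab$ contains a hyperbolic element but is assumed not to be a lattice, $x_0$ must lie in an eigenform locus for real multiplication by an order in some real quadratic field $K$. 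This endows $H_1(x_0; \QQ)$ with the structure of a 2-dimensional $K$-vector space, and $\tau_{x_0}$ becomes $K$-linear once $K$ acts on $\RR^2$ through the real embedding $\sigma_1: K \hookrightarrow \RR$ sending the abstract eigenvalue to $\lambda$. The intersection $\ker(\tau_{x_0}) \cap H_1(x_0; \QQ)$ is then a $K$-submodule of $K^2$ preserved by $\stab$. Because $\stab$ is Zariski dense in $\SL_2(\RR)$, its image in the $K$-algebraic group $\SL_2(K)$---which is the algebraic hull of the Kontsevich--Zorich cocycle on the eigenform locus---is $K$-Zariski dense, and the standard representation $K^2$ of $\SL_2(K)$ is $K$-irreducible. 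Since $\tau_{x_0}$ is not identically zero, this kernel must vanish, so $\tau_{x_0}$ is $\QQ$-injective on $H_1(x_0; \QQ)$ and $\dim_{\QQ} V_{\QQ} = 4$.

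For the second part, I use the short exact sequence \eqref{eq: short exact seq}, which after replacing $\cH(1,1)$ by its finite cover labeling the two zeros is $\stab$-equivariant with trivial action on the cokernel $\QQ$. Pick a lift $v \in H_1^{\rel}(x_0; \QQ)$ of a generator of this cokernel. For each $\gamma \in \stab$ there is $w_\gamma \in H_1(x_0; \QQ)$ with $\gamma_* v = v + w_\gamma$, and the equivariance \eqref{eq: g equivarince of pi} rearranges to $(\gamma - I)\tau_{x_0}(v) = \tau_{x_0}(w_\gamma) \in V_{\QQ}$. Taking $\gamma$ to be the hyperbolic element from the first part, its eigenvalues on $\RR^2$ are $\lambda^{\pm 1} \ne 1$, so the restriction of $\gamma - I$ to the $\gamma$-invariant subspace $V_{\QQ}$ is a $\QQ$-linear automorphism (its eigenvalues on $V_{\QQ}$ lie in $\{\lambda - 1, \lambda^{-1} - 1\}$ and are nonzero). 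Inverting it gives $\tau_{x_0}(v) = (\gamma - I)^{-1}\tau_{x_0}(w_\gamma) \in V_{\QQ}$, and since $H_1^{\rel}(x_0; \QQ) = p^* H_1(x_0; \QQ) + \QQ v$, this forces $\tau_{x_0}(H_1^{\rel}(x_0; \QQ)) = V_{\QQ}$.

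The main obstacle is placing $x_0$ in an eigenform locus in the first part; this is where McMullen's genus-2 analysis does the real work, and it is what uses both the Zariski density and the non-lattice hypothesis in tandem. Once the real multiplication $K$-structure is available, the irreducibility argument delivering injectivity of $\tau_{x_0}$ and the $(\gamma - I)$-inversion trick extending the image from $H_1$ to $H_1^{\rel}$ are essentially formal.
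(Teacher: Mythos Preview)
Your proof is correct and reaches the same conclusion, but by a genuinely different route than the paper. The paper simply cites McMullen \cite[Thm.~9.4]{mcmullen2003teichmuller} for the equality $\tau_{x_0}(H_1^\rel(x_0;\QQ)) = \tau_{x_0}(H_1(x_0;\QQ))$ and \cite[Thm.~9.5]{mcmullen2003teichmuller} for even-dimensionality, then rules out $\dim_\QQ = 2$ by observing that a rank-2 period lattice forces $x_0$ to cover a torus and hence $\stab$ to be arithmetic. You instead invoke the real multiplication structure (from \cite{mcmullen2003genus2dynamics}) to get $\dim_\QQ = 4$ via $K$-irreducibility, and then give a direct $(\gamma - I)$-inversion argument to extend the image from $H_1$ to $H_1^\rel$. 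Both approaches ultimately rest on McMullen's genus-2 analysis; yours is more self-contained and explains \emph{why} the relative periods add nothing new, while the paper's is shorter because the cited theorems already package this.

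Two small points worth tightening. First, your parenthetical ``its eigenvalues on $V_\QQ$ lie in $\{\lambda - 1, \lambda^{-1} - 1\}$'' is not literally the right justification: as a $\QQ$-linear map on the $4$-dimensional space $V_\QQ$, the operator $\gamma - I$ has four eigenvalues over $\bar\QQ$ (including Galois conjugates). The clean argument is simply that $(\gamma - I)$ is injective on $\RR^2$ (no eigenvalue $1$), hence injective on $V_\QQ\subset\RR^2$, hence bijective on the finite-dimensional $V_\QQ$; then surjectivity gives $\tau_{x_0}(v) \in V_\QQ$. Second, your appeal to ``McMullen's classification of $\SL_2(\RR)$-orbits with hyperbolic stabilizer'' should be sharpened: what you actually need is that a hyperbolic $\gamma\in\stab$ endows $Jac(x_0)$ with real multiplication by $\ZZ[\tr\gamma]$, together with the fact that if the trace field is $\QQ$ then $x_0$ is square-tiled and $\stab$ is a lattice. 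Both are in McMullen's work, but it is worth pointing at the precise statements rather than the orbit-closure classification.
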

\begin{proof}
  By McMullen \cite[Theorem 9.4]{mcmullen2003teichmuller}, we obtain that $\tau_{x_0}(H_1^\rel(x_0; \QQ)) = \tau_{x_0}(H_1(x_0; \QQ))$. 
  By \cite[Theorem 9.5]{mcmullen2003teichmuller}, this is an even dimensional $\QQ$ vector space. 
  Considering the edges in any presentation of $x_0$ as a polygon with glued edges we deduce that $\tau(H_1^\rel(x_0; \QQ)) \neq \{0\}$, and moreover, $\spa_{\RR}(\tau(H_1^\rel(x_0; \QQ))) = \RR^2$. Hence $\dim_\QQ \tau(H_1^\rel(x_0; \QQ)) \ge 2$.
  If $\dim_\QQ \tau(H_1^\rel(x_0; \QQ)) = 2$ then $\rk \tau(H_1^\rel(x_0; \ZZ))$ is a lattice in $\RR^2$. Therefore $x_0$ covers the torus $\RR^2 / H_1^\rel(x_0; \ZZ)$, such that the two zeros of the abelian form of $X$ project to a single point in the torus.
  However, this implies that $\stab_{\SL_2(\RR)}(x_0,\omega_0)$ is an arithmetic lattice, while we assumed it is not a lattice.
  Consequently, $\dim_\QQ \tau(H_1(x_0; \QQ)) = \dim_\QQ \tau(H_1^\rel(x_0; \QQ)) = 4$.
\end{proof}
\begin{corollary}\label{cor: splitting}
  Let $x_0\in \cH^1(1,1)$ satisfy that $\stab_{\SL_2(\RR)}(x_0)$ is Zariski dense in $\SL_2(\RR)$ but not a lattice. 
  When we restrict the embedding $p^*$ to the $\SL_2(\RR)$-orbit $\SL_2(\RR). x_0$ and take coefficients over $\RR$, the embedding has an invariant left inverse 
  \[r:H_1^\rel(-;\RR)|_{\SL_2(\RR).x_0}\to H_1(-;\RR)|_{\SL_2(\RR).x_0}, \quad\text{with}\quad r\circ p^*|_{\SL_2(\RR).x_0} = {\rm Id}_{H_1(-;\RR)|_{\SL_2(\RR).x_0}}.\]
\end{corollary}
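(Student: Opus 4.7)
The plan is to build $r$ pointwise out of the period map $\tau$ and invoke Proposition~\ref{prop: general tau eq} to see that this construction makes sense and splits $p^*$; the $\SL_2(\RR)$-equivariance will then be automatic from the equivariance of $\tau$ recorded in Equation~\eqref{eq: g equivarince of pi}.

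First I would check that Proposition~\ref{prop: general tau eq} applies not just at $x_0$ but at every point of the orbit: for $g\in \SL_2(\RR)$ we have $\stab_{\SL_2(\RR)}(g.x_0)=g\,\stab_{\SL_2(\RR)}(x_0)\,g^{-1}$, and conjugation preserves both Zariski density and the property of being (or not being) a lattice. Consequently, at every $x\in \SL_2(\RR).x_0$,
\[\tau_x(H_1(x;\QQ)) = \tau_x(H_1^\rel(x;\QQ)) \subset \RR^2,\]
is a $\QQ$-vector space of $\QQ$-dimension $4$. Since $\dim_\QQ H_1(x;\QQ)=4$ as well, the restriction $\tau_x|_{H_1(x;\QQ)}$ is a $\QQ$-linear isomorphism onto its image; denote its inverse by $\sigma_x$.

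Define the candidate splitting over $\QQ$ by
\[r_x^\QQ := \sigma_x \circ \tau_x \colon H_1^\rel(x;\QQ)\longrightarrow H_1(x;\QQ).\]
The composition makes sense because $\tau_x(H_1^\rel(x;\QQ)) \subseteq \tau_x(H_1(x;\QQ))$, the domain of $\sigma_x$. For $\beta\in H_1(x;\QQ)$ we have $\tau_x(p^*\beta)=\tau_x(\beta)$, so $r_x^\QQ(p^*\beta)=\sigma_x(\tau_x(\beta))=\beta$; hence $r_x^\QQ\circ p^*={\rm Id}_{H_1(x;\QQ)}$. Since $H_1^\rel(x;\RR)\cong H_1^\rel(x;\QQ)\otimes_\QQ \RR$ and similarly for $H_1$ (by the Universal Coefficient Theorem, using that $\RR$ is torsion-free over $\ZZ$), the $\QQ$-linear map $r_x^\QQ$ extends uniquely by $\RR$-linearity to a map $r_x\colon H_1^\rel(x;\RR)\to H_1(x;\RR)$ satisfying $r_x\circ p^* = {\rm Id}_{H_1(x;\RR)}$.

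Finally, for the $\SL_2(\RR)$-equivariance: let $\alpha\in H_1^\rel(x;\QQ)$ and set $\beta=r_x^\QQ(\alpha)$, so that $\tau_x(\beta)=\tau_x(\alpha)$. By Equation~\eqref{eq: g equivarince of pi},
\[\tau_{g.x}(g_*\beta)=g\cdot\tau_x(\beta)=g\cdot\tau_x(\alpha)=\tau_{g.x}(g_*\alpha).\]
Both $g_*\beta$ and $r_{g.x}^\QQ(g_*\alpha)$ lie in $H_1(g.x;\QQ)$ and have the same image under $\tau_{g.x}|_{H_1(g.x;\QQ)}$; by the injectivity of this restriction (already established at $g.x$) they coincide, so $g_*\circ r_x^\QQ=r_{g.x}^\QQ\circ g_*$ on $\QQ$-coefficients. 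Tensoring up with $\RR$ yields the equivariance of the $r_x$'s on $\RR$-coefficients. The only nontrivial input is Proposition~\ref{prop: general tau eq} and its propagation along the orbit; once that is in place, the rest is purely algebraic, so I do not anticipate a serious obstacle.
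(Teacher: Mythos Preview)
Your proposal is correct and follows essentially the same approach as the paper: define the splitting at each point as $\tau_x|_{H_1(x;\QQ)}^{-1}\circ \tau_x|_{H_1^\rel(x;\QQ)}$ using Proposition~\ref{prop: general tau eq}, then extend scalars to $\RR$. Your write-up is in fact more explicit than the paper's, which merely asserts that the construction is natural in $x$ and hence equivariant; your verification via the commuting square~\eqref{eq: g equivarince of pi} spells this out.
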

\begin{proof}
  Since $\dim_\QQ(\tau_{x_0}(H_1(x_0; \QQ))) = 4$ we obtain that \[\tau_{x_0}|_{H_1(x_0; \QQ)}:H_1(x_0; \QQ)\to \tau_{x_0}(H_1(x_0; \QQ))\] is an isomorphism. 
  Define the left inverse $r_\QQ: H_1^\rel(x_0; \QQ)\to H_1(x_0; \QQ)$ by $r_\QQ = \tau_{x_0}|_{H_1(x_0; \QQ)}^{-1} \circ \tau_{x_0}|_{H_1^{\rel}(x_0; \QQ)}$. 
  The natural way in which $r_\QQ$ was defined may be applied for every $x_1\in \SL_2(\RR).x_0$, and hence it extends to an invariant left inverse 
  \[r_\QQ:H_1^\rel(-;\QQ)|_{\SL_2(\RR). x_0}\to H_1(-;\QQ)|_{\SL_2(\RR). x_0}\quad \text{with}\quad r_\QQ \circ p^*|_{\SL_2(\RR). x_0} = {\rm Id}_{H_1(-;\RR)|_{\SL_2(\RR). x_0}}.\]
  Extension of scalars to $\RR$ yields the desired left inverse.
\end{proof}
\begin{definition}
  Consider the locally constant projective bundle \[P = \bigsqcup_{x_0\in \cH^1(1,1)}\PP(H_1^\rel(x_0;\RR)).\] 
  Let $\pi:P\to \cH^1(1,1)$ denote the projection and $\SL_2(\RR).x_0$ an orbit such that $\stab_{\SL_2(\RR)}(x_0)$ is Zariski dense in $\SL_2(\RR)$ but not a lattice. 
  We obtain an invariant section $s_{\SL_2(\RR).x_0}: \SL_2(\RR).x_0\to P$, defined by $s_{\SL_2(\RR).x_0}(x_1) = \ker r|_{x_1}$ for every $x_1\in \SL_2(\RR).x_0$, where $r$ is the left inverse obtained by Corollary \ref{cor: splitting}. 
\end{definition}
This concludes \ref{part: lifts to projective1}: Corollary \ref{cor: splitting} introduces the projective bundle \[P = \bigsqcup_{(S,\omega)\in \cH(1,1)}\PP(H^\rel_1(S;\RR)),\] and locally constant sections $s_i$ of $P$ on each orbit $\SL_2(\RR).x_i$. Restricting $P$ to $\cM$ we obtain the desired bundle $P_\cM$ and the desired sections required by Part \ref{part: lifts to projective1}.

Finally, we wish to show that there is no invariant probability measure on $P_\cM$ projecting to $\lambda_\cM$ on $\cM$. 
Recall McMullen's classification \cite{mcmullen2003genus2dynamics} of affine manifolds in $\cH^1(1,1)$:
\begin{theorem}\label{thm: mcmullen genus 2 classification}
  The connected affine submanifolds of $\cH^1(1,1)$ are 
  \begin{enumerate}
    \item Periodic $\SL_2(\RR)$ orbits.
    \item Let $\cO$ be a quadratic order. 
    The manifold $\cM_\cO$ consisting of curves $(S,\omega)$ such that $Jac(S;\omega)$ has real multiplication in $\cO$ (see McMullen \cite[Thms. 5.1, 5.5]{mcmullen2003genus2dynamics} for more properties of orders curves with Jacobian of real multiplication).
    \item The manifold $\cH(1,1)$. 
  \end{enumerate}
\end{theorem}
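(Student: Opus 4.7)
My plan is to classify connected affine submanifolds $\cM \subseteq \cH^1(1,1)$ through their tangent spaces in the local period coordinates $H^1_{\rel}(S, \Sigma;\RR) \otimes \RR^2$ introduced in Definition \ref{def: teich dynamics}. Since $\SL_2(\RR)$ acts only on the $\RR^2$ factor while fixing $H^1_{\rel}(S, \Sigma;\RR)$ pointwise, the $\SL_2(\RR)$-invariance of $T_x \cM$ forces $T_x \cM = V_\cM \otimes \RR^2$ for some $\RR$-subspace $V_\cM \subseteq H^1_{\rel}(S, \Sigma;\RR)$, and the problem reduces to classifying these $V_\cM$.

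The main tool is the short exact sequence from \eqref{eq: short exact seq} tensored with $\RR$: $0 \to H^1(S;\RR) \to H^1_{\rel}(S, \Sigma;\RR) \to W \to 0$, with $\dim W = 1$ after passing to the labeled cover. Set $V^{\rm abs} := V_\cM \cap H^1(S;\RR)$. By the semisimplicity of the Kontsevich--Zorich cocycle, $V^{\rm abs}$ is a symplectic sub-Hodge substructure of $H^1(S;\RR)$ defined over a number field, so $\dim V^{\rm abs} \in \{0, 2, 4\}$. The case $V^{\rm abs} = 0$ is excluded because $\SL_2(\RR)$ acts nontrivially on absolute periods, so a nontrivial invariant tangent subspace cannot lie entirely over the one-dimensional relative quotient $W$. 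If $\dim V^{\rm abs} = 2$, then the induced symplectic splitting of $H^1(S;\RR)$ corresponds, by the Hodge-theoretic dictionary for genus-$2$ Jacobians, to real multiplication by an order $\cO$ in a real quadratic field, placing $(S,\omega)$ in the eigenform locus $\cM_\cO$ of item (2). If $V^{\rm abs} = H^1(S;\RR)$ and $V_\cM$ surjects onto $W$, then $V_\cM = H^1_{\rel}(S, \Sigma;\RR)$ and $\cM = \cH(1,1)$.

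The main obstacle I expect is ruling out intermediate affine submanifolds, and it splits into two subcases. First, within the eigenform locus $\cM_\cO$ one must show that any further linear constraint yields a periodic $\SL_2(\RR)$-orbit; this requires McMullen's analysis of $\cM_\cO$ as essentially a Hilbert modular surface for $\SL_2(\cO)$ via the two real embeddings of $\cO$, combined with a classification of $\SL_2(\RR)$-invariant holomorphic subvarieties, producing a Weierstrass curve in the nontrivial case. Second, the case $V^{\rm abs} = H^1(S;\RR)$ with $V_\cM \subsetneq H^1_{\rel}(S, \Sigma;\RR)$ --- that is, a constraint purely on the relative period --- must be handled: such a constraint forces the separation vector $\int_{p_1}^{p_2}\omega$ to lie in the $\QQ$-span of the absolute periods, ultimately forcing $(S,\omega)$ to be a branched torus cover and yielding a periodic orbit, again recovering item (1).
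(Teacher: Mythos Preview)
The paper does not prove this theorem: it is quoted from McMullen \cite{mcmullen2003genus2dynamics} and used as a black box, so there is no in-paper argument to compare against.

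Your outline follows the post-McMullen structure theory of affine invariant submanifolds (Wright's field of definition, Filip's semisimplicity/sub-Hodge property, Avila--Eskin--M\"oller symplecticity) rather than McMullen's original argument, which proceeds through genus-$2$--specific geometry: explicit splittings of surfaces as connected sums of tori, the action on the Jacobian, and a direct analysis of the eigenform loci. Your route is in principle legitimate and not circular, and the trichotomy on $\dim p(V_\cM)$ is the right organizing device. Note, however, that you yourself defer the substantive step---no proper affine submanifolds of $\cM_\cO$ besides closed orbits---back to McMullen's Hilbert-modular-surface description, so the proposal does not actually bypass his work.

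There is one genuine gap. In the case $p(V_\cM)=H^1(S;\RR)$ with $V_\cM\subsetneq H^1_{\rel}$, you claim the single relative-period constraint forces the separating vector into the $\QQ$-span of the absolute periods and hence forces a torus cover, ``yielding a periodic orbit.'' This cannot be right on dimension grounds alone: such an $\cM$ would satisfy $\dim_\CC\tilde\cM=4$, whereas a closed $\SL_2(\RR)$-orbit has $\dim_\CC=2$. Nor does an $\RR$-linear relative constraint produce any rationality of periods. Within the structure-theory framework the correct tool here is the full-rank theorem of Mirzakhani--Wright: if $p(T\cM)=H^1$ then $\cM$ is a connected component of its stratum, hence $\cM=\cH(1,1)$ and the case is empty. (A minor side remark: the sequence you write as $0\to H^1\to H^1_{\rel}\to W\to 0$ has the arrows reversed relative to cohomology; the natural map is $p:H^1_{\rel}\to H^1$ with one-dimensional kernel, so $V^{\rm abs}$ should be $p(V_\cM)$ rather than an intersection.)
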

Since no point $x_i$ is contained in a periodic orbit, one of the following holds:
\begin{enumerate}
  \item 
  There may be a quadratic order $\cO$ in a quadratic number field $K$ such that $\cM = \cM_\cO$. 
  Theorem \ref{thm: mcmullen genus 2 classification} allows also orders $\cO\subset \QQ\times \QQ$. However, using \cite[Thm. 5.5]{mcmullen2003genus2dynamics} we deduce that if this holds, then $\tau_{x_0}(H_1(x_0; \QQ))$ is two dimensional and this contradict Claim \ref{prop: general tau eq}. 
  \item 
  $\cM = \cH(1,1)$. 
\end{enumerate}

We will use the notion of the algebraic hull of a vector bundle, as defined in \cite{eskin2018algebraichull}. 
We will not define it here, only declare its type and mention a few properties that follows from its definition:
\begin{claim}[Properties of the algebraic hull]\label{claim: prop of alg hull}
  For every group action $G\acts X$ of a Borel group and a borel space $X$, a $G$-invariant measure $\mu$ on $X$, and a finite dimensional real vector bundle $V$ on $X$ on which $G$ acts as well, the \emph{(measurable) algebraic hull} of the vector bundle is a measurable map that attaches for $\mu$-almost every every $x\in X$, an algebraic subgroup $A_{V,x}\subseteq \Aut(V_x)$. It obeys the following rules:
  \begin{enumerate}
    \item For $\mu$-almost all $x\in X$, and every $g\in G$, we have $A_{V,g.x} = gA_{V,x}g^{-1}$. 
    \item For $\mu$-almost all $x\in X$, the algebraic hull of the dual bundle is the dual of algebraic hull of the bundle, $A_{V^*,x} = (A_{V,x})^*$.
    \item If $V$ contains an invariant vector bundle $W<V$, then for $\mu$-almost every $x$,
    \begin{itemize}
      \item $A_{V,x}$ preserves $W_x$ 
      \item $A_{W,x}$ is the image of the restriction to $W_x$ of $A_{V,x}$,
      \item $A_{V/W,x}$ is the image of the induced action of $A_{V,x}$ on $V_x/W_x$.
    \end{itemize}
    \item If $\mu$ is $G$-ergodic and $V$ has a constant dimension $d$ then here exists a subgroup $A_V\subseteq \GL_d(\RR)$ that is conjugated to almost all $A_{V,x}$. In addition there is a measurable trivialization $\varphi_x:V_x\to \RR^d$ so that for $\mu$-almost every $x\in X$ and every $g\in G$ the composition 
    \[\RR^d\xrightarrow{\varphi_x^{-1}}V_x \xrightarrow{g\cdot -}V_{g.x}\xrightarrow{\varphi_{g.x}}\RR^d,\]
    lies in $A_V$. 
  \end{enumerate}
\end{claim}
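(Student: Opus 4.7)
The plan is to work directly from the definition of the measurable algebraic hull as given in \cite{eskin2018algebraichull}. Choosing a $\mu$-measurable trivialization of $V$ over a conull set turns the $G$-action on $V$ into a cocycle $c : G \times X \to \GL_d(\RR)$. The algebraic hull $A_V \subseteq \GL_d(\RR)$ is defined as the (conjugacy class of the) minimal real algebraic subgroup for which $c$ is measurably cohomologous to an $A_V$-valued cocycle; the fiberwise group $A_{V,x}\subseteq\Aut(V_x)$ is then obtained by pulling $A_V$ back through the cohomology-adjusted trivialization at $x$. With this setup in place, each of the four properties is verified by unwinding the definition.

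Property (1) is immediate from the cocycle identity: the adjusted trivializations at $x$ and at $g.x$ differ by $c(g,x)$, so the two fiberwise groups are related by the corresponding inner automorphism $\Aut(V_x) \to \Aut(V_{g.x})$. Property (2) reduces to the facts that the cocycle on $V^*$ is contragredient to the cocycle on $V$, that the dual of a real algebraic subgroup of $\GL(V_x)$ is a real algebraic subgroup of $\GL(V_x^*)$, and that measurable cohomology is preserved under dualization; minimality then matches on the two sides. For Property (3), an invariant subbundle $W \subseteq V$ corresponds after trivialization to a $c$-invariant subspace, and minimality forces $A_V$ to preserve this subspace --- otherwise one could enlarge the invariant family and shrink the hull. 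The algebraic hull of $W$ is then the restriction of $A_V$ to this subspace (any proper algebraic subgroup realizing the cocycle on $W$ would, together with the original $A_V$, allow a strictly smaller hull on $V$), and the argument for $V/W$ is analogous.

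Property (4) --- the ergodic case --- will be the main technical obstacle. Ergodicity implies that the conjugacy class of $A_{V,x}$ is $\mu$-essentially constant, because the set of $x$ whose fiberwise hull is conjugate to a fixed $H \subseteq \GL_d(\RR)$ is $G$-invariant and measurable by Property (1). The measurable trivialization $\varphi_x$ is then obtained by a measurable selection from the variety of elements of $\GL_d(\RR)$ that conjugate $A_{V,x}$ to $A_V$. The delicate point is the $\mu$-measurability of this selection, which one handles via a standard measurable selection theorem applied to the graph of the conjugation relation inside $X \times \GL_d(\RR)$; once $\varphi_x$ has been fixed, the cocycle identity ensures that the composition $\varphi_{g.x} \circ c(g,x) \circ \varphi_x^{-1}$ lies in $A_V$ for $\mu$-almost every $(g,x)$, as required.
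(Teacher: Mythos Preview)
The paper does not prove this claim at all: it is introduced with ``We will not define it here, only declare its type and mention a few properties that follows from its definition,'' and the properties are simply asserted, with an implicit reference to \cite{eskin2018algebraichull} for both the definition and the verification. Your sketch therefore goes well beyond what the paper does --- you are supplying an actual argument where the paper offers only a citation.

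That said, your outline is the standard one and is essentially correct. A small point: in Property~(3), your justification for the reverse inclusion $A_{W,x} \supseteq \mathrm{image}(A_{V,x}\to\Aut(W_x))$ is too loose. Saying that a smaller hull on $W$ ``together with the original $A_V$'' would yield a smaller hull on $V$ does not work directly, because the measurable cohomology that reduces the $W$-cocycle need not extend compatibly to $V$. The clean way to get equality is to use the alternative characterization of the algebraic hull as the common stabilizer of all invariant measurable sections of algebraic associated bundles: any such section for $W$ induces one for $V$, so $A_{V,x}$ must stabilize it, and hence its image in $\Aut(W_x)$ lies inside $A_{W,x}$. This is the argument used in Zimmer's original treatment and in \cite{eskin2018algebraichull}; you may want to replace the vague combination step with this.
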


The following claim is an application of \cite[Theorem 1.2]{eskin2018algebraichull}.
\begin{claim}\label{claim: algebraic hull computation}
  With the $\SL_2(\RR)$ action on $(\cM, \lambda_\cM)$,
  the algebraic hull of $H_1^\rel(-, \RR)$, namely $A_{H_1^\rel, x}$
  conjugate to 
  \[\begin{pmatrix}
    * & *&0&0&0\\
    * & *&0&0&0\\
    0&0&* & *&*\\
    0&0&* & *&*\\
    0&0&0&0&1
  \end{pmatrix},\]
  for $\lambda_{\cM}$-almost all $x \in \cM$, where the determinants of the two $2\times 2$ blocks are $1$. 
\end{claim}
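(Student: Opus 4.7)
The plan is to compute $A_{H_1^\rel, x}$ by combining Theorem 1.2 of \cite{eskin2018algebraichull} (applied to the Hodge bundle $H_1(-;\RR)$) with the short exact sequence of $\SL_2(\RR)$-equivariant bundles
\[0 \to H_1(-;\RR) \xrightarrow{p^*} H_1^\rel(-;\RR) \to C \to 0,\]
where $C = \mathrm{coker}(p^*)$ is trivial of rank $1$ on the finite cover labeling the zeros. Since $C$ is a constant bundle, $A_{C,x} = \{1\}$, which by Claim \ref{claim: prop of alg hull} already accounts for the bottom-right entry $1$ and the zeros in the last row of the matrix.

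By Theorem \ref{thm: mcmullen genus 2 classification} together with Proposition \ref{prop: general tau eq}, I may restrict attention to $\cM = \cM_\cO$ for a real quadratic order $\cO$ in a field $K$; the only other possibility, $\cM = \cH(1,1)$, would give $A_{H_1,x} = \mathrm{Sp}_4$, which is incompatible with the claimed block form and hence must be handled elsewhere in the paper. In the eigenform locus, real multiplication yields an $\SL_2(\RR)$-equivariant splitting $H_1(-;\RR) = V_+ \oplus V_-$ into rank-$2$ eigenspaces for the two real embeddings of $K$, and the symplectic form pairs $V_+$ with $V_-$ trivially while restricting nondegenerately to each. Theorem 1.2 of Eskin-Filip-Wright then identifies $A_{H_1,x}$ with the centralizer of $\cO\otimes\RR$ inside $\mathrm{Sp}(H_{1,x})$, which is exactly $\mathrm{Sp}(V_+)\times \mathrm{Sp}(V_-)\cong \SL_2\times\SL_2$, producing the two diagonal $2\times 2$ blocks. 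Applying Claim \ref{claim: prop of alg hull} to the filtration $V_+\subset H_1\subset H_1^\rel$, the algebraic hull $A_{H_1^\rel,x}$ preserves each term and projects onto these factors, yielding the zeros in the positions $(1,3),(1,4),(2,3),(2,4),(3,1),(3,2),(4,1),(4,2)$.

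The main remaining step, which I expect to be the most delicate, is to justify the zeros in positions $(1,5)$ and $(2,5)$: the extension of $C$ by $H_1$ must split $\SL_2(\RR)$-equivariantly over $V_+$, giving a decomposition $H_1^\rel = V_+ \oplus W$ with $W$ a rank-$3$ extension of $C$ by $V_-$. The plan is to extend the action of $\cO$ from $H_1$ to a compatible action on $H_1^\rel$ (natural from the module structure on relative classes) and observe that a generator of $C$, represented by the relative class of a path between the two zeros of $\omega$, lies in a single $\cO$-eigenspace; after relabeling this is $V_-$. Since the algebraic hull commutes with $\cO$ by Claim \ref{claim: prop of alg hull} applied to all endomorphisms of the bundle, it must preserve the splitting $H_1^\rel = V_+ \oplus W$, forcing the exact block pattern. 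Verifying this identification of the $\cO$-eigenspace of $C$ is where the geometry of $\cM_\cO$ enters, and I expect it to reduce to an explicit computation in the local affine coordinates $H^1_\rel(S,\Sigma;\RR)\otimes \RR^2$ of the eigenform locus.
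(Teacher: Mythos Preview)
Your approach has a genuine gap: the tautological plane $T\subset H^1_\rel$ (spanned by ${\rm Re}\,\omega$ and ${\rm Im}\,\omega$) is entirely absent from your argument, yet it is the central invariant object. Theorem~1.2 of \cite{eskin2018algebraichull} does not describe the hull of $H_1$ as the centralizer of $\cO\otimes\RR$; it identifies the hull of $T\cM$ as the stabilizer of $T$ and of $\ker p$ inside the group preserving $\tilde Q$. In particular, for $\cM=\cH(1,1)$ the hull still preserves $p(T)$, so $A_{H_1,x}\subseteq\stab_{\Sp(4)}(p(T))\cong\SL_2\times\SL_2$, not $\Sp(4)$ as you assert; the case you dismiss is in fact covered by exactly the same block form.

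Your plan for the zeros in positions $(1,5),(2,5)$---extending the $\cO$-action from $H_1$ to $H_1^\rel$---cannot work as stated. Dually, the candidate $K$-eigenspaces in $H^1_\rel$ would be $T\cM_\cO=p^{-1}(p(T))$ and its Galois conjugate $p^{-1}(p(T)^\sigma)$, each $3$-dimensional; they intersect in $\ker p$ and hence do not give a direct sum decomposition of the $5$-dimensional $H^1_\rel$, so no natural $\cO$-module structure on $H_1^\rel$ arises. The paper instead uses the splitting $H^1_\rel=T\oplus p^{-1}(p(T)^\sigma)$, in which the $2$-dimensional summand is $T$ itself (a subspace of $H^1_\rel$, not lifted from $H^1$); dualizing this gives the desired $3$-dimensional complement $W=\ker\tau_x$ to your $V_+$ directly, without any appeal to real multiplication on relative classes.

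You also omit a step the paper carries out explicitly: once the hull is shown to embed in $\SL_2\times(\RR^2\rtimes\SL_2)$ with surjections onto both factors, one must still exclude the unique proper connected subgroup with that property (the diagonal-type group in Eq.~\eqref{eq: impossible hull}); the paper rules it out via Forni \cite{forni2002deviation}, since that subgroup would force the two top Lyapunov exponents to coincide.
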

\begin{proof}
  If $\cM = \cH^1(1,1)$, then the claim is a direct application of \cite[Theorem 1.2]{eskin2018algebraichull} to $\cM$, and the duality $H_1^\rel(-, \RR) = (H^1_\rel(-, \RR))^*$. 

  Suppose now that $\cM = \cM_\cO$ for some order in a quadratic number field $\cO\subset K\subset \RR$. 
  For the rest of the proof, the bundles and algebraic hulls are always restricted to $\cM_\cO$. 
  Let $p:H^1_\rel \to H^1$ be the map of bundles defined as in \cite[Theorem 1.2]{eskin2018algebraichull}, dual to the map $p^*$ we defined here. 
  Let $T = (\ker \tau)^\perp\subseteq H^1_\rel(-, \RR)$ be the tautological plane.
  By \cite[Theorem 5.1 Point 3]{mcmullen2003genus2dynamics}, the group $p(T)\subseteq H^1(-, \RR)$ is defined over $K$ via the rational structure $H^1(-, \ZZ)$ of $H^1(-, \RR)$.
  This implies that one can apply to $p(T)$ the Galois automorphisms $\Gal(K/\QQ)$. Let $\sigma$ denote the nontrivial element in $\Gal(K/\QQ)$. \cite[Theorem 5.1 Point 3]{mcmullen2003genus2dynamics} also states that $p(T)^\perp = p(T)^\sigma$, where $p(T)^\perp$ is the orthogonal complement with respect to the symplectic cup product. 
  Then we have $H^1(-, \RR) = p(T) \oplus p(T)^\sigma$. 
  Hence $H^1_\rel(-, \RR) = T \oplus p^{-1}(p(T)^\sigma) = T \oplus p^{-1}(p(T))^\sigma$.
  Claim \ref{claim: prop of alg hull} yields that the algebraic hull $A_{H^1_\rel(-, \RR), -}$ embeds in 
  $A_{T, -}\times A_{p^{-1}(p(T))^\sigma, -}$, and projects onto the factors. 

  We now simplify the expression of the bundle $p^{-1}(p(T))^\sigma$. 
  By \cite[Theorem 5.1]{mcmullen2003genus2dynamics} again, we have $T\cM_\cO = p^{-1}(p(T))$.
  Hence $p^{-1}(p(T))^\sigma = (T\cM_\cO)^\sigma$.


  By \cite[Theorem 1.2]{eskin2018algebraichull}, the algebraic hull of $T\cM_\cO$ at a point $x$, is the collection of all automorphisms of $(T\cM_\cO)_x$ that fixes $\ker p$ and preserves the tautological plane $T$. In other words, it is conjugates to the set of matrices of the form 
  \[\begin{pmatrix}
    * & * & 0\\
    * & * & 0\\
    0 & 0 & 1
  \end{pmatrix},\]
  with determinant is $1$, where the first two coordinates are conjugate to $T$ and the last coordinate coordinate $\ker p$.
  In particular, for $\mu$-almost every $x$ we have $A_{T,x}\cong \SL(T_x)$. 

  Similarly, the algebraic hull of $T\cM_\cO^\sigma$ is is the collection of all automorphisms of $(T\cM_\cO)_x^\sigma$ satisfying the same conditions except of the requirement not to stabilize the tautological plane $T$. In other words, it is conjugates to the set of matrices of the form 
  \[G_3 = \left\{
    \begin{pmatrix}
    * & * & 0\\
    * & * & 0\\
    * & * & 1
    \end{pmatrix}
  \right\}
  ,\]
  with determinant is $1$, where the last coordinate represents $\ker p$.
  We denote this group of matrices by $\RR^2 \rtimes \SL_2(\RR)$. 
  Hence $A_{H^1_\rel} \subseteq A_T \times A_{T\cM_\cO^\sigma} \cong \SL_2(\RR)\times (\RR^2\rtimes \SL_2(\RR))$, and projects onto each coordinate. 
  One can see that (up to conjugations) there is only one connected proper algebraic subgroup of $\SL_2(\RR)\times (\RR^2\rtimes \SL_2(\RR))$ that projects onto the two coordinates, namely 
  \begin{align}\label{eq: impossible hull}
    A_{H^1_\rel}\cong \left\{
      \begin{pNiceArray}{cc|ccc}
        a & b & 0 & 0 & 0 \\
        c & d & 0 & 0 & 0 \\
        \hline
        0 & 0 & a & b & 0 \\
        0 & 0 & c & d & 0 \\
        0 & 0 & e & f & 1
      \end{pNiceArray} :\det\begin{pmatrix}
        a&b\\c&d
      \end{pmatrix} = 1, a,b,c,d,e,f \in \RR\right\}.
    \end{align}
  However, this implies that the two top Lyapunov exponents of the $\ta_t$ action are equal. 
  Since this contradicts Forni \cite[Cor. 2.2]{forni2002deviation}, we get a contradiction to $A_{H^1_\rel}\subsetneq A_T \times A_{T\cM_\cO^\sigma}$. 
  Hence $A_{H^1_\rel}\cong A_T \times A_{T\cM_\cO^\sigma}$.
  Duality shows that $A_{H_1^\rel}$ admits the desired description. 
\end{proof}
\begin{remark}
  The use of \cite[Cor. 2.2]{forni2002deviation} is given to give a complete picture. 
  We could finish the proof with a weaker result in Claim \ref{claim: algebraic hull computation}: either the algebraic hull is as in the claim, or it is the transpose of Eq. \eqref{eq: impossible hull}. 
\end{remark}

We will now deduce Part \ref{part: no section1} of Proposition \ref{prop: impossible section method1}. 
Assume to the contrary that there is an $\SL_2(\RR)$-invariant probability measure $\mu\in \Prob(P)$, so that $\pi_*\mu = \lambda_\cM$.
\begin{claim}\label{claim: no inv measure}
  Suppose $G$ acts on LCSC topological space $X$ persevering a probability measure $\mu\in X$.
  Let $V$ be a real vector space bundle over $X$ on which $G$ acts. 
  Assume that $\nu$ is a $G$ invariant probability measure on $\PP(V)$ that projects to $\mu$ on $Y$. 
  Disintegrate $\nu = \int \nu_x \bd \mu(x)$ where $\nu_x$ is supported on $\PP(V_x)$. 
  Then for $\mu$-almost all $x$ the algebraic hull $A_{V,x}$ preserves $\nu_x$. 
\end{claim}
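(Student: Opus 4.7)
The plan is a Zimmer-style measurable cocycle reduction applied to the stabilizer of the fiber measures. After decomposing $\mu$ into $G$-ergodic components we may assume $\mu$ is $G$-ergodic. Applying Claim~\ref{claim: prop of alg hull} point~4 gives a measurable trivialization $\varphi_x\colon V_x\to\RR^d$ and a fixed algebraic group $A_V\subseteq\GL_d(\RR)$ such that the cocycle $\rho(g,x)=\varphi_{g.x}\circ g\circ\varphi_x^{-1}$ lies in $A_V$ for $\mu$-a.e.\ $x$ and every $g\in G$. Transporting the disintegration, set $\tilde\nu_x=(\varphi_x)_*\nu_x\in\Prob(\PP(\RR^d))$; the $G$-invariance of $\nu$ translates into $\rho(g,x)_*\tilde\nu_x=\tilde\nu_{g.x}$.

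Next I would consider the stabilizer $H_x\subseteq A_V$ of $\tilde\nu_x$, passing to its Zariski closure so that $H_x$ is an algebraic subgroup of $A_V$. The cocycle identity forces $H_{g.x}=\rho(g,x)H_x\rho(g,x)^{-1}$, so $x\mapsto H_x$ is $G$-equivariant. Ergodicity makes the induced $G$-invariant map to the set of $A_V$-conjugacy classes of algebraic subgroups of $A_V$ essentially constant, and a Borel selection yields a measurable $\psi_x\in A_V$ with $\psi_x H_x\psi_x^{-1}=H$ for some fixed algebraic subgroup $H\subseteq A_V$. Replacing $\varphi_x$ by $\psi_x\circ\varphi_x$, the resulting cocycle $\rho'$ conjugates $H$ into itself and hence takes values in the normalizer $N_{A_V}(H)$. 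By the minimality built into the definition of the algebraic hull, $N_{A_V}(H)=A_V$, i.e., $H$ is normal in $A_V$.

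The corresponding new measures $\tilde\nu_x'=(\psi_x)_*\tilde\nu_x$ all have stabilizer equal to $H$ in $A_V$, and hence lie on a single $A_V$-orbit in $\Prob(\PP(\RR^d))$ identified with $A_V/H$. Choosing a Borel lift $\sigma_x\in A_V$ of the induced measurable map $x\mapsto[\tilde\nu_x']\in A_V/H$ and conjugating once more, the cocycle $\rho''(g,x)=\sigma_{g.x}^{-1}\rho'(g,x)\sigma_x$ takes values in $H$. Minimality of the algebraic hull applied a second time forces $H=A_V$, so $A_V$ stabilizes $\tilde\nu_x$ for $\mu$-a.e.\ $x$; pulling back via $\varphi_x$ yields the desired conclusion that $A_{V,x}$ preserves $\nu_x$.

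The main obstacle I anticipate is the measurable selection argument in the second paragraph: showing that $x\mapsto H_x$ almost surely lands in a single $A_V$-conjugacy class of algebraic subgroups and admits a Borel section of the conjugation map. This is standard in Zimmer's cocycle rigidity framework but requires care, either via countability of conjugacy classes of algebraic subgroups of a real algebraic group (by dimension and isomorphism type) or via an Effros-type Borel cross-section for the conjugation action of $A_V$ on its subgroup variety. A minor additional technicality is checking that the Zariski closure operation preserves the cocycle equivariance $H_{g.x}=\rho(g,x)H_x\rho(g,x)^{-1}$, which is automatic because conjugation by $\rho(g,x)$ is an algebraic automorphism of $A_V$ and commutes with Zariski closure.
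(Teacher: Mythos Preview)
The paper gives no argument of its own here; it simply points to \cite[p.~298]{eskin2018algebraichull}, which carries out precisely the Zimmer cocycle reduction you outline. So the overall strategy is correct and matches the intended reference.

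Your execution has a genuine gap, though. You pass from $\stab_{A_V}(\tilde\nu_x)$ to its Zariski closure $H_x$ at the outset. At the end you obtain $H=A_V$, but that only says the stabilizer is Zariski \emph{dense} in $A_V$; it does not say $A_V$ fixes the measure (a closed Zariski-dense subgroup can be proper, e.g.\ a lattice). The same confusion infects the step ``all $\tilde\nu_x'$ have stabilizer $H$, hence lie on a single $A_V$-orbit'': equal stabilizers never force a common orbit, and in any case only the \emph{closure} of the stabilizer equals $H$, so your lift $\sigma_x$ from $A_V/H$ need not satisfy $\sigma_x^{-1}\tilde\nu_x'=\nu_0$. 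The missing ingredient is that the stabilizer in a real algebraic group of a probability measure on $\PP(\RR^d)$ is already real algebraic (Furstenberg's lemma together with the fact that compact real linear groups are algebraic; see Zimmer, \emph{Ergodic Theory and Semisimple Groups}, Ch.~3). Granting this, no closure is needed: apply Zimmer's tameness theorem directly to the equivariant map $x\mapsto\tilde\nu_x$ to land in a single $A_V$-orbit by ergodicity, reduce the cocycle in one step to the algebraic stabilizer $H$, and conclude $H=A_V$ by minimality of the hull. The detour through $N_{A_V}(H)$ then becomes superfluous.
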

The proof of the claim appears in \cite[page 298, last page of the first proof of Theorem 5.2]{eskin2018algebraichull}. 
We deduce that the disintegration $\mu = \int_\cM \mu^{x}\bd \lambda_\cM(x)$ satisfies that almost every $\mu^{x}$ is invariant to a group conjugate to $A_{H_1^\rel, x}$. 
However, by Claim \ref{claim: algebraic hull computation}, there are no $A_{H_1^\rel, x}$-invariant probability measures on $\PP(H_1^\rel(x;\RR))$. 
This contradicts the existence of an $\SL_2(\RR)$-invariant probability measure on $P$, and proves Part \ref{part: no section1} of Proposition \ref{prop: impossible section method1}. 

Now that we proved the three parts of Proposition \ref{prop: impossible section method1}, it contradicts the existence of a sequence $(x_i)_{i=1}^\infty \in \cH^1(1,1)$ such that $\stab_{\SL_2(\RR)}(x_i)$ is not a lattice but $\delta(\stab_{\SL_2(\RR)}(x_i))\to 1$. Hence there is an absolute bound $1-\varepsilon_2 < 1$ on the critical exponents $\delta(\stab_{\SL_2(\RR)}(x))$ for $x\in \cH(1,1)$ for which $\stab_{\SL_2(\RR)}(x)$ is not a lattice.

\subsection{Proof in the case that \texorpdfstring{$\cM$}{M} is `large'}
\label{ssec: proof M large}
In this section, we extend the proof from the case $\cM \subseteq \cH(1,1)$, discussed in the previous subsection, to a more general class of manifolds, that satisfies the \emph{large size assumption} and will be specified in Definition \ref{def: large size assumption}. The following subsection will address the complementary assumptions on $\cM$ with different techniques.

In this subsection we work with complex vector bundles instead of real ones. 
Denote by $Q$ the symplectic cup product on $H^1(-,\CC)$ and by $\tilde Q$ its pullback to $H_{\rel}^1(-,\CC)$. 
Recall \cite[Thm. 1.4]{avila2017symplectic} that $Q$ is nondegenerate on $p(T\cM)$, and in particular, $\dim p(T\cM)$ is even. Denote $\rk(\cM) = \dim(p(T\cM))/2$. 
Recall the \emph{field of definition} $K$ of $\cM$ as defined by Wright \cite{wright2014field}. It is shown that $T\cM\subseteq H^1_\rel(-,\CC)|_{\cM}$ is defined over $K$ in the $H^1_\rel(-,\ZZ)|_{\cM}$ integral structure. Denote $d = [K:\QQ]$. 
Hence there are $d$ complex embeddings $\Hom(K, \CC)$. For every $\sigma:K\to \CC$ we can consider the conjugation $T\cM^\sigma$. 
Wright \cite{wright2014field} shows in addition that the sum of the vector bundles $\{p(T\cM)^\sigma:\sigma\in \Hom(K,\RR)\}$ is a direct sum.

\begin{claim}\label{claim: direct sum}
  If $x_0\in \cM$ satisfies that $\stab_{\SL_2(\RR)}(x_0)$ contains a hyperbolic element, then
  $T_{x_0}$ is defined over a number field $K_0\supseteq K$,
  the sum of the vector spaces $(p(T_{x_0})^{\sigma'})_{\sigma':K_0\to \CC}$ is a direct sum,
  the symplectic form $\tilde Q$ is nondegenerate on each $T_{x_0}^{\sigma'}$ and they are pairwise orthogonal.
\end{claim}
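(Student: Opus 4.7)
I interpret $T_{x_0}$ as the fiber at $x_0$ of the tautological plane $T = (\ker\tau)^\perp \subseteq H^1_\rel(-, \RR)$, a distinguished real $2$-plane whose periods recover the flat structure. The plan is to use the hyperbolic element $\gamma \in \stab_{\SL_2(\RR)}(x_0)$ to produce an arithmetic $K_0$-structure on $H^1_\rel(x_0)$ in which $T_{x_0}$ appears as a sum of $\gamma$-eigenspaces, and then transport Wright's direct sum decomposition of $p(T\cM_0)$ from the orbit closure $\cM_0 := \overline{\SL_2(\RR).x_0}$ down to the $2$-planes $\{p(T_{x_0})^{\sigma'}\}_{\sigma'}$.

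In detail, I would first let $\gamma$ be hyperbolic with eigenvalues $\lambda^{\pm 1}$, so $\gamma$ acts $\QQ$-linearly on $H^1_\rel(x_0, \QQ)$ and acts as $\mathrm{diag}(\lambda, \lambda^{-1})$ on $T_{x_0}$. Set $K_0 := \QQ(\lambda)$; the sum of the $\lambda$- and $\lambda^{-1}$-eigenspaces of $\gamma$ on $H^1_\rel(x_0, K_0)$ is a $K_0$-rational subspace whose scalar extension to $\RR$ contains $T_{x_0}$, which endows $T_{x_0}$ with a $K_0$-structure. To see $K_0 \supseteq K$, I would observe that $\cM_0 \subseteq \cM$ is an affine submanifold by Eskin-Mirzakhani-Mohammadi, its field of definition $K_0'$ contains $K$ (since the defining linear equations of $T\cM_0$ refine those of $T\cM$), and Wright's characterization of fields of definition via trace fields of Veech groups identifies $K_0'$ with a subfield of $K_0$. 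Wright further gives that $K_0$ is totally real, so $\Hom(K_0, \CC) = \Hom(K_0, \RR)$. Next, applying Wright's theorem to $\cM_0$, the conjugates $\{p(T\cM_0)^{\sigma'}\}_{\sigma'}$ form a direct sum inside $H^1(x_0, \CC)$; since $p(T_{x_0})^{\sigma'} \subseteq p(T\cM_0)^{\sigma'}$, the smaller $2$-planes are in direct sum as well. Finally, $\tilde Q$ restricted to $T_{x_0}$ is nondegenerate because it coincides (up to sign) with the pullback by $\tau$ of the standard area form on $\RR^2$; Galois conjugation preserves this nondegeneracy on each $T_{x_0}^{\sigma'}$. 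Pairwise orthogonality is the formal consequence of $\gamma$-invariance of $\tilde Q$: if $v \in T_{x_0}^{\sigma'}$ and $w \in T_{x_0}^{\sigma''}$ lie in $\gamma$-eigenspaces of eigenvalues $\sigma'(\lambda)^{\pm 1}$ and $\sigma''(\lambda)^{\pm 1}$, then $\tilde Q(v,w) = \tilde Q(\gamma v, \gamma w)$ forces $\tilde Q(v,w) = 0$ whenever the eigenvalue product is not $1$, which is the generic case when $\sigma' \neq \sigma''$.

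The main obstacle I expect is the second step: establishing $K_0 \supseteq K$ cleanly, and more broadly reconciling the field $K_0 = \QQ(\lambda)$ coming from the hyperbolic element's eigenvalues with the field of definition $K_0'$ of the orbit closure $\cM_0$. The tautological plane is, a priori, only a real subspace and is genuinely transcendental for surfaces lacking affine symmetries; only after invoking $\gamma$ does it become definable over a number field, and one must verify that the $K_0$ produced this way is compatible with, and indeed refines, Wright's totally real field of definition of $\cM_0$. I expect this to go through by invoking the known dictionary between Veech group trace fields and fields of definition in the affine-submanifold literature, but threading the identification $K_0' \subseteq K_0$ past potential degenerate cases (e.g., when $\gamma$ has extra eigenvalue coincidences with the Kontsevich-Zorich action on other factors of $H^1_\rel$) is the principal technical point.
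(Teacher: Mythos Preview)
Your outline has the right shape but misses the key input and then takes unnecessary detours. The step ``the sum of the $\lambda$- and $\lambda^{-1}$-eigenspaces \dots\ contains $T_{x_0}$, which endows $T_{x_0}$ with a $K_0$-structure'' is a non-sequitur: containment in a $K_0$-rational space does not make $T_{x_0}$ itself $K_0$-rational. What is missing is McMullen's theorem that the expanding eigenvalue $\lambda_0$ of a hyperbolic Veech-group element is a \emph{simple} eigenvalue of $(g_0)^*$ on $H^1_\rel(x_0;\CC)$; simplicity forces the eigenspace sum to have dimension exactly $2$, hence to equal $T_{x_0}$. The paper packages everything via the single rational operator $A=(g_0)^*+(g_0^{-1})^*$ and takes $K_0=\QQ[\lambda_0+\lambda_0^{-1}]$ (not $\QQ(\lambda_0)$): then $T_{x_0}$ is the $(\lambda_0+\lambda_0^{-1})$-eigenspace of $A$, and each $T_{x_0}^{\sigma'}$ is the $\sigma'(\lambda_0+\lambda_0^{-1})$-eigenspace. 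This gives the direct-sum statement immediately (distinct eigenspaces of one operator) and the $\tilde Q$-orthogonality as well, since $(g_0)^*$-eigenvectors pair nontrivially under $\tilde Q$ only when their eigenvalues multiply to $1$, which cannot happen across distinct $\sigma'$. Note also that your choice $K_0=\QQ(\lambda)$ can be a proper quadratic extension of $\QQ(\lambda+\lambda^{-1})$; since $T_{x_0}$ is already defined over the smaller field, distinct embeddings of your $K_0$ may produce identical conjugates, so the ``direct sum'' indexed by $\Hom(K_0,\CC)$ would repeat summands.

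For $K_0\supseteq K$ the paper does not pass through the orbit closure $\cM_0$ at all. It argues directly from Wright's theorem applied to $\cM$: if $\tilde\sigma\in\Aut(\CC)$ fixes $p(T_{x_0})$, then $\{0\}\neq p(T_{x_0})\subseteq p(T\cM_{x_0})\cap p(T\cM_{x_0})^{\tilde\sigma}$, and since the Galois conjugates of $p(T\cM)$ sum directly this forces $p(T\cM_{x_0})^{\tilde\sigma}=p(T\cM_{x_0})$. Hence $\Gal(\CC/K_0)\subseteq\Gal(\CC/K)$, i.e.\ $K_0\supseteq K$. Your proposed route---comparing fields of definition of $\cM_0\subseteq\cM$ and invoking a trace-field dictionary---is more delicate than you indicate; in particular, ``the defining equations of $T\cM_0$ refine those of $T\cM$, hence $K_0'\supseteq K$'' is not a formal consequence of the inclusion $\cM_0\subseteq\cM$ (a subspace of a $K$-rational space can perfectly well be defined over a field incomparable with $K$).
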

\begin{proof}
  Since $\stab_{\SL_2(\RR)}(x_0)$ is discrete and Zariski dense, there is a hyperbolic element $g_0\in \stab_{\SL_2(\RR)}(x_0)$. 
  For every $\lambda$ and a linear operator $A$ on a space $V$ denote by $V^{A,\lambda}$ the $\lambda$ eigenspace of $A$. 
  By McMullen \cite[Thm. 5.3]{mcmullen2003billiards} (see also \cite[Cor. 2.2]{forni2002deviation}), the element $g_0$ has eigenvalues $\lambda_0, \lambda_0^{-1}$ with $\lambda_0>1$ and $\lambda_0$ is a multiplicity one eigenvalue of $(g_0)^*|_{H^1_{\rel}(x_0;\CC)}$, the action of $g_0$ on $H^1_{\rel}(x_0;\CC)$. 
  Since $(g_0)^*|_{H^1_{\rel}(x_0;\CC)}$ preserves the symplectic form on $H^1_{\rel}(x_0;\CC)$, we deduce that eigenvalues come in pairs of inverses and $\lambda_0^{-1}$ also has multiplicity $1$. 
  Since $(g_0)^*|_{H^1_{\rel}(x_0;\CC)}$ acts on $T_{x_0}$ via $g_0$ we deduce that 
  \begin{align}\label{eq: T as eigenspace}
    T_{x_0} = H^1_{\rel}(x_0;\CC)^{(g_0)^*, \lambda_0} \oplus H^1_{\rel}(x_0;\CC)^{(g_0)^*, \lambda_0^{-1}} = H^1_{\rel}(x_0;\CC)^{(g_0)^* + (g_0^{-1})^*,\lambda_0 + \lambda_0^{-1}}.
  \end{align}
  
  Since $\lambda_0$ is an eigenvalue of the integral operator, we deduce that $\lambda_0$ is an algebraic integer. 
  Set $K_0 = \QQ[\lambda_0 + \lambda_0^{-1}]$, and note that Eq. \eqref{eq: T as eigenspace} represents $T_{x_0}$ as a vector space defined over $K_0$. 
  For every ${\sigma'}\in \Hom(K_0, \CC)$, we can conjugate Eq. \eqref{eq: T as eigenspace} and obtain 
  \begin{align}\label{eq: T sig as eigenspace}
    T_{x_0}^{\sigma'} = H^1_{\rel}(x_0;\CC)^{(g_0)^* + (g_0^{-1})^*,{\sigma'}(\lambda_0 + \lambda_0^{-1})}.
  \end{align}
  Since the eigenspaces of an operator are summed directly, we deduce that $T_{x_0}^{\sigma'}$ are summed directly. Since $\tilde Q$ is nondegenerate on each $T_{x_0}^{\sigma'}$ and they are orthogonal, we obtain that $\tilde Q$ is nondegenerate on their direct sum $\bigoplus_{\sigma'\in \Hom(K_i, \CC)} T_{x_0}^{\sigma'}$. Since $\tilde Q$ factors through $p$, we obtain that $p$  is bijective on this direct sum, the image of the direct sum under $p$ is again a direct sum
  \[\bigoplus_{\sigma'\in \Hom(K_i, \CC)} p(T_{x_0})^{\sigma'}.\]

  For every ${\sigma'}:K_0\to \CC$ let $\tilde {\sigma}':\CC\to \CC$ be a field automorphism that extends ${\sigma'}$. 
  Then we obtain that \[T_{x_0}^{\sigma'} = H^1_{\rel}(x_0;\CC)^{(g_0)^*,\tilde {\sigma}'(\lambda_0)} \oplus H^1_{\rel}(x_0;\CC)^{(g_0)^*,\tilde {\sigma}'(\lambda_0^{-1})}.\]
  Since $(g_0)^*$ preserves $\tilde Q$, we deduce that two eigenvectors can be non-orthogonal only if the eigenvalues are inverses. 
  Therefore $(T_{x_0}^{\sigma'})_{{\sigma'}\in \Hom(K_0, \CC)}$ are mutually orthogonal. 

  Considering the orbit under $\Aut(\CC)$ of the pair $p(T_{x_0})\subseteq p(T\cM_{x_0})$, \cite{wright2014field} shows that $\Aut(\CC).p(T\cM_{x_0})$ consists of vector spaces that can be summed directly, and in particular, if $p(T\cM_{x_0})^{\tilde \sigma_2}\neq p(T\cM_{x_0})^{\tilde \sigma_1}$ then $p(T\cM_{x_0})^{\tilde \sigma_1} \cap p(T\cM_{x_0})^{\tilde \sigma_2} = \{0\}$.  
  Hence if a Galois automorphism $\tilde \sigma \in \Aut(\CC)$ preserves $p(T_{x_0})$, then 
  $p(T_{x_0}) \subseteq p(T\cM_{x_0})^\sigma\cap p(T\cM_{x_0})$ and hence $p(T\cM_{x_0})^\sigma = p(T\cM_{x_0})$. 
  Consequently, \[\stab_{\Aut(\CC)}(p(T\cM_{x_0})) = \Gal(\CC/K_0)\subseteq \stab_{\Aut(\CC)}(p(T_{x_0})) = \Gal(\CC/K),\] and $K_0\supseteq K$.


\end{proof}
Let $i\ge 1$. 
Since $\stab_{\SL_2(\RR)}(x_i)$ is discrete and Zariski dense, there is a hyperbolic element in $\stab_{\SL_2(\RR)}(x_i)$. Hence we may apply Claim \ref{claim: direct sum} and obtain a number field $K_i \supseteq K$. 
For every embedding $\sigma: K \to \CC$ consider the collection of embeddings extending $\sigma$
\[\Hom_{\sigma}(K_i,\CC) = \{\sigma'\in \Hom(K_i, \CC): \sigma'|_{K} = \sigma\}.\]
It is a standard result in Galois theory that $\#\Hom_{\sigma}(K_i,\CC) = [K_i:K]$ for every $\sigma\in \Hom(K, \CC)$.
Note that 
\begin{align}\label{eq: d' < rk}
  [K_i:K] = \frac{1}{2}\dim \bigoplus_{\sigma'\in \Hom_{{\rm Id}_K}(K_i, \CC)}p(T_{x_i})^{\sigma'} \le \dim p(T\cM) / 2 = \rk \cM.
\end{align} 
Restrict $i$ to a subsequence such that $[K_i:K] = [K_i:\QQ]/d$ is a constant $d' \le \rk \cM$. 

\begin{definition}[Definition of sections of a quasi-projective bundle]
  Let $\sigma\in \Hom(K, \CC)$, and define the locally flat vector bundle $V^\sigma$ by $V^\sigma_{x} = T\cM_{x}^\sigma$ if $\sigma \neq {\rm Id}_K$ and $V^{{\rm Id}_K} = T\cM_{x}\cap T_{x}^\perp$.
  Note that for every $\sigma' \in \Hom_{\sigma}(K_i, \CC)\setminus \{{\rm Id}_{K_i}\}$ we have $T_{x_i}^{\sigma'}\subseteq V_\sigma$. 

  Thus, consider the bundle $P^{\sigma, \circ}$ on $\cM$ defined by 
  \[P^{\sigma, \circ}_x = \left\{V\in {\rm Gr}(V^\sigma_{x},2) : V\cap \ker P = \{0\}\right\}.\]
  For every $i\ge 1$ and an embedding $\sigma' \in \Hom_\sigma(K_i,\CC)\setminus \{{\rm Id}_{K_i}\}$
  define a flat section 
  $\varphi_{i,\sigma'}: \SL_2(\RR).x_i \to P^{\sigma, \circ}$ by 
  \[\varphi_{i,\sigma'}(g.x_i) = g^* T_{x_i}^{\sigma'} = T_{g.x_i}^{\sigma'}. \]
\end{definition}
The vector bundles $V^\sigma$ are studied in \cite{eskin2018algebraichull}, and their algebraic hull is computed:
\begin{align}
  \label{eq: alg hull V sig}
  \begin{split}
  A_{V^\sigma,x} &= \left\{g\in \SL(V^\sigma): g|_{V^\sigma_x\cap \ker p} = {\rm Id}_{V^\sigma_x\cap \ker p}, \tilde Q\circ g = \tilde Q\right\}\\&\cong \begin{pmatrix}
    \Sp(2(\rk(\cM) - \ind_{\sigma = {\rm Id}_K}))&0\\
    *&I_{ V^\sigma\cap \ker p}
  \end{pmatrix}.
  \end{split}
\end{align}
This bundle $P^{\sigma, \circ}$ is not projective, only quasi-projective, so this is not a proof of Part \ref{part: lifts to projective1}. The first way to make it projective, replacing it by the entire Grassmannian, is essentially what we did in Subsection \ref{ssec: work in H11}. It fails if $\dim (V\cap \ker P)\ge 2$. Then the action of $A_{V^\sigma,x}$ has invariant sections and they would disprove Part \ref{part: no section1}. 
However, we can simply take a quotient of $V^\sigma$ to reduce $V^\sigma\cap \ker P$ to be one dimensional.
\begin{claim}\label{claim: group has no inv}
  If $d\ge 2$, the action $\Sp(2d)\acts {\rm Gr}(2d, 2)$ has no invariant measures. 
  If $d\ge 1$, the action $\begin{pNiceArray}{c|c}
    \Sp(2d)&0\\
    \hline *\cdots*&1
  \end{pNiceArray}\acts {\rm Gr}(2d+1, 2)$ has no invariant measures. 
\end{claim}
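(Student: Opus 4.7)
The plan is to analyze the $G$-orbits on the relevant Grassmannian in each case and rule out an invariant probability measure on every orbit using the standard criterion: for a unimodular Lie group $G$ and a closed subgroup $H$, the homogeneous space $G/H$ admits a $G$-invariant probability measure if and only if $H$ is unimodular and $G/H$ is compact. Since $\Sp(2d)$ is semisimple, hence unimodular, and the semidirect extension $\Sp(2d) \ltimes \RR^{2d}$ is unimodular (as $\Sp(2d)$ preserves Lebesgue measure on $\RR^{2d}$), this criterion applies in both cases.

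For the first assertion, I would note that $\Sp(2d)$ acts on $\mathrm{Gr}(2d,2)$ with exactly two orbits: the closed orbit of isotropic 2-planes (on which $\tilde Q$ vanishes) and the open orbit of symplectic 2-planes. The stabilizer of an isotropic 2-plane is a proper parabolic subgroup of $\Sp(2d)$, which is non-unimodular, so that closed orbit carries no invariant probability measure. The stabilizer of a symplectic 2-plane is $\SL_2 \times \Sp(2d-2)$, which is reductive and hence unimodular; however, for $d \ge 2$ isotropic 2-planes exist, so the symplectic orbit is a proper dense open subset of the compact Grassmannian and hence non-compact as a homogeneous space, forcing its invariant measure to be infinite. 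Combining the two orbits rules out any $\Sp(2d)$-invariant probability measure.

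For the second assertion, let $K = \ker \tilde Q \subset W$ be the $1$-dimensional kernel, which is pointwise fixed by $G$, and split $\mathrm{Gr}(W, 2) = X_0 \sqcup X_1$ into the 2-planes containing $K$ and those that do not. The $G$-action on $X_0 \cong \mathbb{P}(W/K) \cong \mathbb{P}^{2d-1}$ factors through the $\Sp(2d)$-quotient and is transitive with stabilizer a maximal parabolic, so there is no invariant probability measure on $X_0$. There is a $G$-equivariant projection $X_1 \to \mathrm{Gr}(W/K, 2) = \mathrm{Gr}(2d, 2)$, also factoring through $\Sp(2d)$, so for $d \ge 2$ any $G$-invariant probability measure on $X_1$ would push forward to an $\Sp(2d)$-invariant probability measure on $\mathrm{Gr}(2d, 2)$, contradicting the first assertion. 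For $d = 1$ one handles this directly: $X_1 \subset \mathbb{P}^2$ is an affine plane on which $G = \SL_2 \ltimes \RR^2$ acts transitively with stabilizer $\SL_2$, so $X_1 \cong \RR^2$ as a $G$-space and carries only infinite invariant measures. The main delicate point is the careful orbit analysis—identifying the stabilizer of a symplectic 2-plane as $\SL_2 \times \Sp(2d-2)$ and verifying non-compactness of the open orbit for $d \ge 2$—together with the separate treatment required when $d = 1$ in Part 2, where the first assertion is vacuous.
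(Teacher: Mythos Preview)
Your argument is correct and considerably more explicit than the paper's, which dispatches the claim in one line as ``a simple application of \cite[Cor.~1.10]{bader2017almost}.'' That result supplies a general criterion ruling out invariant probability measures for algebraic actions of this type and applies uniformly to both cases; your orbit-by-orbit analysis reaches the same conclusion by elementary means and is self-contained, at the cost of the casework (the two orbit types under $\Sp(2d)$, the $X_0/X_1$ split, and the separate $d=1$ treatment in the second part). Both approaches are perfectly adequate for how the claim is used later.

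One step deserves tightening. In the first assertion you conclude that the open orbit $\Sp(2d)/(\SL_2\times\Sp(2d-2))$ carries only an infinite invariant measure because it is non-compact. Non-compactness alone does not force infinite volume---lattice quotients are the standard counterexample---so the criterion you state at the outset (``$G/H$ admits a $G$-invariant probability measure iff $H$ is unimodular and $G/H$ is compact'') is false as written. What actually does the job here is the Borel density theorem: a closed subgroup of finite covolume in a connected semisimple real Lie group without compact factors must be Zariski dense, whereas $\SL_2\times\Sp(2d-2)$ is a proper Zariski-closed subgroup of $\Sp(2d)$ for $d\ge 2$. With that replacement your proof goes through.
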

This is a simple application of \cite[Cor 1.10.]{bader2017almost}.
\begin{definition}\label{def: large size assumption}
  We say that $(\cM, d')$ satisfies the \emph{large size assumption} if 
  For some $\sigma \in \Hom(K, \CC)$ we have $\dim(V^\sigma) \ge 3$ and $\Hom_\sigma(K_i, \CC)\setminus \{{\rm Id}_{K_i}\} \neq \emptyset$. 
\end{definition}
\begin{remark}
  Note that while we used $K_i$ to define the large size assumption, it is only dependant on $d'$ as 
  \[\#(\Hom_\sigma(K_i, \CC)\setminus \{{\rm Id}_{K_i}\}) = \begin{cases}
    d' - 1&\text{if }\sigma = {\rm Id}_{K}\\
    d'&\text{if }\sigma = {\rm Id}_{K}.
  \end{cases}
  \]
  Note that if $K\neq \QQ$ then the large size assumption always holds, with $\sigma \in \Hom(K, \CC)\setminus \{{\rm Id}_K\}$, and $\dim V^\sigma = \dim T\cM > 2$. (If we had $\dim T\cM = 2$ then it would be a closed orbit, and hence periodic. However we assumed that it is not periodic.)
  If $K = \QQ$ then the large size assumption is simply $d' \ge 2$, and $\dim V_{{\rm Id}_K} = \dim (T\cM) - 2 \ge 3$.
\end{remark}
We now study the complement of the large size assumption:
\begin{definition}
  We say that $(\cM, d')$ satisfies the \emph{small size assumption} if $K = \QQ$ and one of the following holds
  \begin{enumerate}
    \item $d' = 1$.
    \item $d' = 2$, $\dim T\cM = 4$, and $\ker p \cap T\cM = \{0\}$.
  \end{enumerate}
\end{definition}
\begin{claim}
  The small and large size assumption are complementary
\end{claim}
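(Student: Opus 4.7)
The plan is to verify complementarity by a direct case analysis on $K$ and $d'$, leaning on the remark following Definition~\ref{def: large size assumption} to translate the large size assumption into purely numerical conditions. The ingredients I will combine are: (i) the count $\#(\Hom_\sigma(K_i,\CC)\setminus\{{\rm Id}_{K_i}\})$ equals $d'$ or $d'-1$ according to whether $\sigma$ is nontrivial or trivial, (ii) the symplectic splitting $T\cM = T \oplus (T\cM \cap T^\perp)$ coming from the nondegeneracy of $\tilde Q$ on the tautological plane $T$, which gives $\dim V^{{\rm Id}_K} = \dim T\cM - 2$, and (iii) the elementary identity $\dim T\cM = 2\rk\cM + \dim(T\cM \cap \ker p)$ combined with the bound $d' \le \rk\cM$ from Eq.~\eqref{eq: d' < rk}.

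First I dispose of the case $K \neq \QQ$: the small size assumption fails by fiat, while the large size assumption holds by choosing any $\sigma \in \Hom(K,\CC)\setminus\{{\rm Id}_K\}$ and noting that $\dim V^\sigma = \dim T\cM \ge 3$, since $T\cM$ strictly contains the tautological plane $T$ (otherwise $\SL_2(\RR).x_i$ would be a closed, hence periodic, orbit, which was excluded at the start of Section~\ref{sec: end of proof}). Next I turn to $K = \QQ$, where the large size assumption reduces via the remark to the conjunction of $d' \ge 2$ and $\dim T\cM \ge 5$. I split on $d'$: if $d'=1$, large size fails and the first clause of small size holds; if $d' \ge 3$, then $\rk\cM \ge 3$ and so $\dim T\cM \ge 6$, hence large size holds while small size fails; if $d'=2$, I subdivide according to whether $\dim T\cM \ge 5$ (large size holds, small size fails) or $\dim T\cM = 4$ (large size fails, and I claim the second clause of small size holds).

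The only step requiring more than bookkeeping is the terminal subcase $d'=2$, $\dim T\cM = 4$: here one must verify that the condition $\ker p \cap T\cM = \{0\}$ in the second clause of the small size assumption is automatic rather than an extra hypothesis that might fail. This falls out of the squeeze $4 = \dim T\cM \ge 2\rk\cM \ge 2d' = 4$, which forces equality throughout, so $\rk\cM = 2$ and $\dim(T\cM \cap \ker p) = 0$ by identity (iii). With that observation the case analysis closes: in every situation exactly one of the two assumptions is satisfied. I do not anticipate a genuine obstacle; the entire argument is a short combinatorial verification once the remark and the dimension identity are in hand.
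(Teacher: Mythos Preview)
Your proposal is correct and follows essentially the same approach as the paper: both arguments use the remark to reduce the large size assumption (when $K=\QQ$) to the numerical condition $d'\ge 2$ together with $\dim T\cM\ge 5$, and both pivot on the squeeze $4\le 2d'\le 2\rk\cM\le \dim T\cM\le 4$ to force $\ker p\cap T\cM=\{0\}$ in the boundary subcase. The only difference is organisational: the paper proves just the direction ``not large $\Rightarrow$ small'' (leaving the converse implicit, since it is immediate from the definitions), whereas you run a full bidirectional case analysis verifying that exactly one assumption holds in each regime.
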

\begin{proof}
  If $(\cM, d')$ does not satisfy the large size assumption then we must have $K = \QQ$ and one of the following:
  \begin{enumerate}
    \item $d'=1$,
    \item $d' \ge 2$ and $\dim T\cM = \dim V^{{\rm Id}_\QQ} + 2 \le 4$. 
  \end{enumerate}
  The first case is covered in the small size assumption. For the second, note that
  \begin{align}
    4\le 2d' \stackrel{\eqref{eq: d' < rk}}{\le} 2\rk \cM \le \dim T\cM \le 4
  \end{align}
  Hence all the inequalities are equalities, and we get that $d' = 2$ and $T\cM = p(T\cM)$ and hence $\ker p \cap T\cM = \{0\}$.
\end{proof}
For the rest of the subsection we assume that $(\cM, d')$ satisfies the large size assumption, let  $\sigma \in \Hom(K, \CC)$ as in Definition \label{def: large size assumption} and $\sigma_i \in \Hom_\sigma(K_i, \CC)\setminus \{{\rm Id}_{K_i}\}$. 
Let $U\subseteq V^\sigma\cap \ker p$ be defined as follows: if $V^\sigma\cap \ker p = 0$ then $U=0$. 
If $\ker p\cap V^\sigma$ has positive dimension then since $\ker p$ is the constant vector bundle, there is codimension $1$ subbundle $U < V^\sigma\cap \ker p$. 
\begin{claim}
  $\dim V^\sigma / U \ge 3$.
\end{claim}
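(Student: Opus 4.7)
The plan is to split into cases on whether $V^\sigma \cap \ker p$ is trivial, and in each case produce a lower bound on $\dim V^\sigma / U$ using the tautological plane conjugates $T_{x_i}^{\sigma'}$.

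First I would observe the basic dimension count: by the definition of $U$, we have $\dim U = \max(0, \dim(V^\sigma \cap \ker p) - 1)$. If $V^\sigma \cap \ker p = 0$, then $U = 0$ and $\dim V^\sigma / U = \dim V^\sigma \ge 3$ directly by the large size assumption. So I may assume $\dim(V^\sigma \cap \ker p) \ge 1$, in which case
\[\dim V^\sigma / U = \dim V^\sigma - \dim(V^\sigma \cap \ker p) + 1 = \dim p(V^\sigma) + 1,\]
and it suffices to show $\dim p(V^\sigma) \ge 2$.

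The key input here is the large size assumption, which gives us some $\sigma_i \in \Hom_\sigma(K_i, \CC) \setminus \{\mathrm{Id}_{K_i}\}$. I would show that $T_{x_i}^{\sigma_i} \subseteq V^\sigma_{x_i}$: if $\sigma \ne \mathrm{Id}_K$, this is the containment $T_{x_i}^{\sigma_i} \subseteq T\cM^\sigma$, while if $\sigma = \mathrm{Id}_K$, then by Claim \ref{claim: direct sum} the $T_{x_i}^{\sigma'}$ are mutually $\tilde Q$-orthogonal, so $T_{x_i}^{\sigma_i} \subseteq T_{x_i}^\perp$, and combined with $T_{x_i}^{\sigma_i} \subseteq T\cM$ this gives $T_{x_i}^{\sigma_i} \subseteq T\cM \cap T_{x_i}^\perp = V^{\mathrm{Id}_K}_{x_i}$.

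Finally I would argue that $p$ is injective on $T_{x_i}^{\sigma_i}$, so $\dim p(T_{x_i}^{\sigma_i}) = 2$, which forces $\dim p(V^\sigma) \ge 2$ and completes the proof. The injectivity is a Galois-conjugate of the fact that $p|_T$ is injective (the tautological plane is spanned by the real and imaginary parts of $\omega$, which represent nonzero absolute cohomology classes), and Claim \ref{claim: direct sum} already provided this for the conjugates via the eigenspace description \eqref{eq: T sig as eigenspace} together with nondegeneracy of $\tilde Q$ on each $T_{x_0}^{\sigma'}$. The only mildly delicate point is keeping the two definitions of $V^\sigma$ straight (the identity embedding versus non-identity embedding case), which is the reason I separate the verification of $T_{x_i}^{\sigma_i} \subseteq V^\sigma_{x_i}$ into those two sub-cases above.
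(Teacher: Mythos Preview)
Your proof is correct and follows essentially the same approach as the paper: both split on whether $U=0$, invoke the large size assumption in that case, and otherwise reduce to $\dim p(V^\sigma)\ge 2$ via the containment $T_{x_i}^{\sigma_i}\subseteq V^\sigma$ and the injectivity of $p$ on $T_{x_i}^{\sigma_i}$. Your write-up is more detailed in justifying the containment and the injectivity, but these facts were already recorded earlier in the paper (just before the definition of $P^{\sigma,\circ}$ and in Claim~\ref{claim: direct sum}, respectively), so the arguments coincide.
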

\begin{proof}
  If $U=0$ then the result is given by the large size assumption.
  If $U\neq 0$ then \[\dim V^\sigma / U = \dim V^\sigma / (V^\sigma\cap \ker p) + 1 = 
  \dim p(V^\sigma) + 1 \ge \dim p(T^{\sigma_i}) + 1 = 3.
  \]
\end{proof}
By Eq. \eqref{eq: alg hull V sig}, tha algebraic hull $A_{ V^\sigma / U}$ is of the form discussed in Observation \ref{claim: group has no inv}. 
Let $P^{\sigma}$ be the locally constant bundle 
\[P^\sigma_x = {\rm Gr}(V^\sigma_x / U),\]
and the projection $\pi_U:P^{\sigma, \circ}\to P^{\sigma}$ induced by the quotient mod $U$. 
The compositions $\pi_U\circ \varphi_{i,\sigma_i}:\SL_2(\RR).x_i\to P^\sigma$  complete the proof of Part \ref{part: lifts to projective1}. 

Now we cah prove Part \ref{part: no section1}: if there was a measure $\nu$ on $P^\sigma$ that projects to $\lambda_\cM$ on $\cM$, then by Claim \ref{claim: no inv measure} applied to Grassmannians instead of projective spaces, the disintegration $\nu_x$ on $P^\sigma_x$ are $A_{ V^\sigma / U}$ invariant. However, Observation \ref{claim: group has no inv} shows that there is not such invariant measure. This completes the proof of $\ref{part: no section1}$, and hence we get a contradiction to the existence of $(x_i)_{i=1}^\infty$.

\subsection{Orbits in affine manifolds with the small size assumption}
\label{ssec: arithmeticity}
For this subsection we return to work with absolute and relative cohomology with coefficients in $\RR$. 
Assume that $(\cM, d')$ satisfies the small size assumption. 
In particular, $K = \QQ$. We show the following theorem that contradicts the existence of the sequence $(x_i)_{i=1}^\infty$. 
\begin{theorem}\label{thm: stab is a lattice}
  For every orbit $\SL_2(\RR).{x_0}$ in $\cM$ such that $\stab_{\SL_2(\RR)}(x_0)$ contains a hyperbolic element $g_0$ and $T_{x_0}$ is defined over a number field $K_0$ of degree $d'$ over $\QQ$, we have that $\stab_{\SL_2(\RR)}(x_0)$ is a lattice. 
\end{theorem}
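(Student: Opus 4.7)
The plan is to exploit the rigidity supplied by the hyperbolic element $g_0$ and show that in each sub-case of the small size assumption, $x_0 = (S,\omega)$ is a classical surface whose Veech group is automatically a lattice.

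As in the proof of Claim~\ref{claim: direct sum}, $g_0$ has eigenvalues $\lambda_0^{\pm 1}$ on the integer lattice $H^1_{\rel}(x_0;\ZZ)$, so $\lambda_0$ is an algebraic integer and $T_{x_0}$ is precisely the $(\lambda_0 + \lambda_0^{-1})$-eigenspace of $(g_0)^* + (g_0^{-1})^*$; in particular, $T_{x_0}$ is defined over $K_0 = \QQ[\lambda_0 + \lambda_0^{-1}]$, which has degree $d'$ over $\QQ$.

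\textbf{Sub-case $d' = 2$, $\dim T\cM = 4$, $\ker p \cap T\cM = \{0\}$.} The injectivity of $p|_{T\cM}$ lets us regard $T\cM$ as a subspace of absolute cohomology, and Claim~\ref{claim: direct sum}, specialised to the two embeddings $\mathrm{Id}, \sigma \in \Hom(K_0,\CC)$, gives a Galois decomposition
\[
p(T\cM|_{x_0}) \;=\; p(T_{x_0}) \oplus p(T_{x_0})^{\sigma},
\]
with $Q$ nondegenerate on each summand and pairing them trivially. This is precisely the condition that the Jacobian of $S$ has real multiplication by an order $\cO_0 \subset K_0$ with $\omega$ an eigenform. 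I would then invoke McMullen's classification of such eigenforms \cite{mcmullen2003billiards}: their $\SL_2(\RR)$-orbits are closed in their stratum, so $\stab_{\SL_2(\RR)}(x_0)$ is a lattice.

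\textbf{Sub-case $d' = 1$.} Now $K_0 = \QQ$, so $T_{x_0}$ is defined over $\QQ$ inside $H^1_{\rel}(x_0;\RR)$. Let $T_{x_0}^{\QQ} = T_{x_0} \cap H^1_{\rel}(x_0;\QQ)$; its period image $\tau_{x_0}(T_{x_0}^{\QQ}) \subset \RR^2$ is a $2$-dimensional $\QQ$-form of $\RR^2$, and it is preserved by $g_0$ acting via an integer matrix with irrational quadratic eigenvalue $\lambda_0$. Since $[\omega] \in T_{x_0}$, the absolute periods $\tau_{x_0}(H_1(x_0;\ZZ))$ lie inside a rank-$2$ $\ZZ$-module, i.e.\ a lattice $\Lambda \subset \RR^2$. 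This exhibits $(S,\omega)$ as a translation cover of the flat torus $\RR^2/\Lambda$, branched only over $\Sigma$. Translation covers of tori are square-tiled up to affine conjugation, and their Veech groups are arithmetic lattices in $\SL_2(\RR)$ by Gutkin--Judge.

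The principal obstacle is in the $d' = 1$ case: upgrading $\QQ$-rationality of $T_{x_0}$ to the statement that $\tau_{x_0}(H_1(x_0;\ZZ))$ really has $\ZZ$-rank exactly $2$. The argument uses the integrality of $(g_0)^*$ on $H^1_{\rel}(x_0;\ZZ)$ together with the irrationality of $\lambda_0$: the $\QQ$-span of the periods is $(g_0)^*$-invariant and can carry a quadratic irrational eigenvalue only if its rank is a multiple of $2$, and inside the two-dimensional $\QQ$-form supplied by $T_{x_0}^\QQ$ this forces rank exactly $2$. The $d' = 2$ sub-case, by contrast, is essentially a direct citation once the real multiplication structure has been read off from Claim~\ref{claim: direct sum}.
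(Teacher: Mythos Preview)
Your $d'=1$ argument is correct and reaches the conclusion by a different (more classical) route than the paper. The paper does not invoke Gutkin--Judge; instead it fixes the covolume $D = \cov_{\tilde Q}(T_{x_0}\cap H^1_\rel(x_0;\ZZ))$, proves that the set $Z_D\subset\cM$ of points whose tautological plane is integral of covolume $D$ is closed and locally an $\SL_2(\RR)$-orbit (via a discreteness statement for integral $2$-planes in a symplectic lattice, Claim~\ref{claim: discreteness of D-integral planes}), and then applies Smillie's theorem. Your torus-cover argument is shorter; the paper's argument has the advantage of being reusable for $d'=2$. Incidentally, the ``principal obstacle'' you flag is not one: once $T_{x_0}$ is defined over $\QQ$ the relative periods automatically lie in a rank-$2$ $\ZZ$-module (image of $H^1_\rel(x_0;\ZZ)$ under pairing with an integral basis of $T_{x_0}$), and the irrationality of $\lambda_0$ plays no role.

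Your $d'=2$ argument has a genuine gap. The decomposition $p(T\cM|_{x_0}) = p(T_{x_0})\oplus p(T_{x_0})^\sigma$ takes place inside the $4$-dimensional space $p(T\cM|_{x_0})$, not inside the $2g$-dimensional $H^1(x_0;\RR)$; for $g>2$ this does \emph{not} give real multiplication on $\mathrm{Jac}(S)$, so the cited McMullen result (which is specific to genus $2$) does not apply. Even in genus $2$ the citation is off: McMullen's eigenform loci $\cM_\cO$ are $3$-dimensional, not single closed orbits, so ``eigenforms have closed orbits'' is not what \cite{mcmullen2003billiards} says. What the paper actually does is run the same discreteness argument as for $d'=1$: set $\eta=\lambda_0+\lambda_0^{-1}$ and show (Claim~\ref{claim: discreteness of eta-integral planes}) that the set of $2$-planes $V\subset T\cM_{x_0}$ for which some \emph{integral} operator $A$ on $T\cM_{x_0}\cap H^1_\rel(x_0;\ZZ)$ acts by $\eta$ on $V$ and by $\sigma(\eta)$ on $V^\perp$ is discrete in the open set where $\tilde Q|_V$ is nondegenerate. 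Since the operator $A_x$ determined by $T_x$ varies continuously in $x$ and is forced to be integral along the orbit, the orbit is closed in $\cM$ and Smillie finishes. The key input you are missing is this direct discreteness of the arithmetic data inside the rational lattice $T\cM\cap H^1_\rel(-;\ZZ)$, which replaces any appeal to a classification of eigenforms.
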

The result of the theorem contradicts the assumption that $\stab_{\SL_2(\RR)}(x_i)$ is not a lattice. 
The proof uses ideas of McMullen \cite{mcmullen2003teichmuller,mcmullen2003genus2dynamics}. 
The goal is to use to use $g_0$ and the definition over $K_0$ to identify a property of $x_0$ that holds only on a closed subset of $\cM$ and has dimension $3$, thereby proving that the orbit is closed. In the $d'=1$ case, the closed set is the collection of flat surfaces that cover a torus. 
In the $d'=2$ case, the elements of the closed set are usually termed \emph{eigenforms}.
\begin{proof}[Proof of Theorem \ref{thm: stab is a lattice} when $d' = 1$]
  Since $K = \QQ$, we get that $T\cM_{x_0}\cap H^1_\rel({x_0};\ZZ)$ is of full rank in $T\cM_{x_0}$. 
  Since $d' = 1$ and we get that $K_0 = \QQ$. 
  Hence $T_{x_0}\subseteq T\cM_{x_0}$ is defined over $\ZZ$.
  Let $\Lambda_{x_0} = \{H^1_\rel(x_0;\ZZ) \cap T_{x_0}\}$ be a full rank $2$ lattice in $T_{x_0}$. 
  Let $D = \cov_{\tilde Q}(\Lambda_{x_0}) = |{\tilde Q}(v_1, v_2)| \in \NN_{>0}$, where $v_1, v_2$ are a basis of $\Lambda$. 
  Let 
  \[Z_D = \left\{x \in \cM: \begin{matrix}
    \Lambda_{x} = H^1_\rel(x;\ZZ) \cap T_{x}\text{ has full rank $2$ in $T_{x}$}\\
    \cov_{\tilde Q}(\Lambda_{x}) = D
  \end{matrix}\right\}.\]
  Note that $Z_D$ is $\SL_2(\RR)$ invariant. 
  To show that it is a $3$ dimensional closed manifold, suppose that $z_i \in Z_D$ converges to $z_\infty \in \cM$. We will show that for all $i>0$ sufficiently large, $z_i = g_i.z_\infty$ for some $g_i\in \SL_2(\RR)$ with $g_i\to I_2$. 
  Indeed, since $H^1_\rel(-,\CC)$ is locally flat, identify 
  $H^1_\rel(z_i,\CC)$ with $H^1_\rel(z_\infty,\CC)$ for all $i$ sufficiently large. 
  Then $T_{z_i}\subseteq T\cM_{z_\infty}\subseteq H^1_\rel(z_\infty,\CC)$. 
  For every $i=1,\dots, \infty$ there are two prescribed elements $(dx)_i = {\rm Re}(\omega_i), (dy)_i = {\rm Im}(\omega_i) \in T_{z_i}$, where $\omega_i$ is the holomorphic form corresponding to $z_i$, with $\tilde Q((dx)_i, (dy)_i) = 1$. 
  The limit $z_i\to z_\infty$ implies that 
  \begin{align}\label{eq: limit forms}
    (dx)_i\to (dx)_\infty\text{ and }(dy)_i\to (dy)_\infty. 
  \end{align}
  We will use the following Claim:
  \begin{claim}\label{claim: discreteness of D-integral planes}
    For every integral $2$-form $\omega$ on $\ZZ^d$, let 
    \[U_\omega = \{V\in {\rm Gr}(\RR^d, 2): \omega|_V\text{ is non-degenerate}\},\] 
    and for every $D\ge 1$
    \[U_\omega(\ZZ,D) = \left\{V\in U_\omega: \begin{matrix}
      V\cap \ZZ^d\text{ has full rank $2$ in }V,\\
      \text{and }\cov_\omega(V\cap \ZZ^d) = D
    \end{matrix}\right\}.\] 
    Then $U_\omega(\ZZ,D)$ is discrete in $U_\omega$. 
  \end{claim}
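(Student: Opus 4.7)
The plan is to combine the two-dimensional Minkowski reduction with the uniform non-degeneracy of $\omega$ on planes near $V$, reducing the problem to counting integer vectors of bounded Euclidean length.

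First, I would fix a Euclidean inner product on $\RR^d$ and let $\nu_W$ denote the induced area form on each $2$-plane $W$. Since both $\omega|_W$ and $\nu_W$ are $2$-forms on a $2$-dimensional space, they are proportional: $\omega|_W = c(W)\nu_W$ for a scalar $c(W)\in\RR$ that depends continuously on $W$ in the Grassmannian (as is seen by evaluating $c(W)=\omega(e_1,e_2)$ on an orthonormal basis of $W$ varying continuously near $V$). Because $V\in U_\omega$ we have $c(V)\neq 0$, so on a small neighborhood $N$ of $V$ in $U_\omega$ the value $|c(W)|$ is bounded below by a positive constant. Consequently, for every $W\in N\cap U_\omega(\ZZ,D)$ the Euclidean covolume $\cov_{\nu_W}(W\cap\ZZ^d)=D/|c(W)|$ is bounded above by a constant $D'$ depending only on $\omega$, $N$, and $D$.

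Next, I would apply Minkowski reduction in dimension $2$: every rank-$2$ lattice $\Lambda$ in a Euclidean plane admits a $\ZZ$-basis $v_1,v_2$ with $\|v_j\|\le C\sqrt{\cov_{\nu}(\Lambda)}$ for a universal constant $C$. Applied to $\Lambda=W\cap\ZZ^d$, this produces a $\ZZ$-basis of $W\cap\ZZ^d$ consisting of integer vectors of Euclidean length at most $C\sqrt{D'}$. Since only finitely many integer vectors satisfy such a length bound, only finitely many lattices $W\cap\ZZ^d$ occur as $W$ ranges over $N\cap U_\omega(\ZZ,D)$, and hence only finitely many planes $W=\spa_\RR(W\cap\ZZ^d)$.

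This finiteness forces any sequence $W_n\to V$ with $W_n\in U_\omega(\ZZ,D)$ to be eventually equal to $V$, which is the required discreteness. The only mildly delicate step is the continuity of $c(\cdot)$ near $V$, which is routine; the rest is a direct application of Minkowski reduction together with the discreteness of $\ZZ^d$. I do not anticipate any serious obstacle.
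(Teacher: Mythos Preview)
Your argument works, but the Minkowski step is misstated: a general rank-$2$ lattice does \emph{not} admit a basis with both vectors of length $\le C\sqrt{\cov_\nu(\Lambda)}$ (a long thin lattice is a counterexample). What Minkowski actually gives is $\|v_1\|\cdot\|v_2\|\le C\,\cov_\nu(\Lambda)$. In your situation this suffices, since $\Lambda=W\cap\ZZ^d$ consists of nonzero integer vectors and hence $\|v_j\|\ge 1$, forcing $\|v_j\|\le C\,D'$. With that correction the finiteness argument goes through as you describe.

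The paper takes a different route. Rather than bounding lattice bases, it attaches to each $V\in U_\omega$ the linear operator $A_V$ on $\RR^d$ that acts as $D\cdot{\rm Id}$ on $V$ and as $0$ on the $\omega$-orthogonal complement $V^\perp$; this depends continuously on $V$. For $V\in U_\omega(\ZZ,D)$ one checks, using an integral basis $v_1,v_2$ of $V\cap\ZZ^d$ with $\omega(v_1,v_2)=D$, that $A_V$ is an integral matrix. A sequence $V_i\to V$ in $U_\omega$ then gives a convergent sequence of integral matrices $A_{V_i}$, hence eventually constant, and $V_i$ is recovered as the $D$-eigenspace of $A_{V_i}$. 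Your approach is more elementary, relying only on the geometry of numbers; the paper's operator construction is slicker and, more to the point, transfers verbatim to the companion Claim~\ref{claim: discreteness of eta-integral planes} in the $d'=2$ case by replacing the eigenvalue pair $(D,0)$ with $(\eta,\sigma(\eta))$, whereas your covolume-bounding argument would need a separate adaptation there.
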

  Using the claim for a general lattice $H^1_\rel(z_\infty,\ZZ) \cap T\cM_{z_\infty}$ instead of $\ZZ^d$ we note that $T_{z_i} \in U_{\tilde Q}(\ZZ, D)$ and $T_{z_\infty} \in U_{\tilde Q}$. 
  The discreteness of $U_{\tilde Q}(\ZZ, D)$ implies that $T_{z_i} = T_{z_\infty}$ for all $i$ sufficiently large. We obtain that for some $a_i,b_i,c_i,d_i \in \RR$ we have 
  \begin{align}
    (dx)_i = a_i (dx)_\infty + b_i (dy)_\infty,\qquad
    (dy)_i = c_i (dx)_\infty + d_i (dy)_\infty.
  \end{align}
  By Eq. \eqref{eq: limit forms}, $a_i, d_i \to 1$, $b_i, c_i \to 0$. 
  Since $\tilde Q((dx)_i, (dy)_i) = 1$ we obtain that $g_i = \begin{pmatrix}
    a_i&b_i\\
    c_i&d_i\\
  \end{pmatrix}$ has determinant $1$. 
  By the definition of the $\SL_2(\RR)$ action we obtain $z_i = g_i.z_\infty$. 
  This implies that $Z_D$ is closed and locally an $\SL_2(\RR)$ orbit. 
  Hence $\SL_2(\RR).x_0\subseteq Z_D$ is a closed orbit. By Smillie's theorem \cite{veech1995geometric}, $\stab_{\SL_2(\RR)}(x_0)$ is a lattice.
\end{proof}
\begin{proof}[Proof of Claim \ref{claim: discreteness of D-integral planes}]
  Let $V\in U_\omega(\ZZ, D)$, and choose a basis $v_1, v_2\in V\cap \ZZ^d$ for which $\omega(v_1, v_2) = D$. 
  observe the operator $A_V:\RR^d\to \RR^d$ defined by $A_V(v) = \omega(v, v_2) - \omega(v, v_1)$ restricts to multiplication by $D$ on $V$ and $0$ on $V^\perp$. 

  For $V\in U_\omega$, the map $V\mapsto V^\perp$ is continuous, and $V\oplus V^\perp = \RR^d$. 
  Hence there is a unique map $A_V$ such that $A_V|_{V} = D\cdot {\rm Id}_V$ and $A_V|_{V^\perp} = 0$. 
  In addition, $A_V$ depends continuously on $V$. 
  However, for $V$ in $U_\omega(\ZZ, D)$, the operator $A_V$ is integral. 
  
  Assume that $V_i \in U_\omega(\ZZ, D)$ converge to $V\in U_\omega$. 
  Then $A_{V_i} \to A_V$. Since $A_{V_i}$ is integral, we obtain that $A_{V_i}$ is constant for all $i$ sufficiently large. Since $V_i$ is the $D$-eigenspace of $A_{V_i}$, we deduce that $V_i$ is constant for $i$ sufficiently large. 
  This shows that $U_\omega(\ZZ, D)$ is discrete in $U_\omega$. 
\end{proof}

\begin{proof}[Proof of Theorem \ref{thm: stab is a lattice} when $d' = 2$]
  In this case, $\dim T\cM = 4$ and $\ker p \cap T\cM = \{0\}$.
  In particular, $\tilde Q|_{T\cM}$ is nondegenerate. 
  Since $T_{x_0}$ is defined over $K_0$ and it is real, we deduce that $K_0$ is a totally real quadratic field. 
  Let $\sigma_0$ be the algebraic conjugation on $K_0$. 
  By Claim \ref{claim: direct sum} we deduce that $T_{x_0}^{\sigma_0} = T_{x_0}^\perp \cap T\cM$.
  As in the proof of Claim \ref{claim: direct sum} if $g_0$ is a hyperbolic element in $\stab_{\SL_2(\RR)}(x_0)$, with largest eigenvalue $\lambda_0>0$
  the integral operator $A = g_0^* + (g_0^*)^{-1}|_{T\cM_{x_0}}$ on $T\cM_{x_0}$ that satisfies that $A|_{T_{x_0}} \equiv \lambda_0+\lambda_0^{-1}$ and $A|_{T_{x_0}^{\sigma_0}} \equiv \sigma(\lambda_0+\lambda_0^{-1})$. 
  Denote $\eta = \lambda_0+\lambda_0^{-1}$ and note that since it is an eigenvalue of an integral operator $A$, it is an algebraic integer in $K$. 
  The proof of this part of Theorem \ref{thm: stab is a lattice} ends similarly to the $d'=1$ case provided the following claim:
  \begin{claim}\label{claim: discreteness of eta-integral planes}
    For every nondegenerate integral $2$-form $\omega$ on $\ZZ^4$, let 
    \[U_\omega = \{V\in {\rm Gr}(\RR^4, 2): \omega|_V\text{ is non-degenerate}\},\] 
    and
    \[U_\omega(\ZZ, \eta, \sigma(\eta)) = \left\{V\in U_\omega: \exists A\in M_{2\times 2}(\ZZ), A|_{V} = \eta {\rm Id}_V, A|_{V^\perp} = \sigma(\eta) {\rm Id}_{V^\perp}\right\}.\] 
    Then $U_\omega(\ZZ, \eta, \sigma(\eta))$ is discrete in $U_\omega$. 
  \end{claim}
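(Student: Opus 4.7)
The plan is to mimic the proof of Claim \ref{claim: discreteness of D-integral planes} nearly verbatim, with the single adjustment that the operator restricts to $\sigma(\eta)\cdot \mathrm{Id}$ on $V^\perp$ rather than $0$. The two main ingredients we need are a canonical continuous choice of operator $A_V$ attached to any $V\in U_\omega$, and the fact that $V$ can be recovered from $A_V$ as an eigenspace.

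First I would fix $V \in U_\omega$. Since $\omega|_V$ is nondegenerate, $V$ has trivial intersection with its $\omega$-orthogonal complement $V^\perp$, and a dimension count gives the direct sum decomposition $\RR^4 = V\oplus V^\perp$. Define $A_V\colon \RR^4\to \RR^4$ by declaring $A_V|_V = \eta\cdot \mathrm{Id}_V$ and $A_V|_{V^\perp}=\sigma(\eta)\cdot \mathrm{Id}_{V^\perp}$. This operator depends continuously on $V$ because $V\mapsto V^\perp$ and the induced projections vary continuously on $U_\omega$. Moreover, any operator $A$ satisfying the restrictions in the definition of $U_\omega(\ZZ,\eta,\sigma(\eta))$ must equal $A_V$, since an operator on $\RR^4$ is determined by its restrictions to two complementary subspaces.

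Next I would observe that $\eta\neq \sigma(\eta)$. Indeed, in the construction preceding the claim we have $\eta=\lambda_0+\lambda_0^{-1}$ with $K_0 = \QQ[\lambda_0+\lambda_0^{-1}]$ a real quadratic field (see the proof of Claim \ref{claim: direct sum}), so $\eta$ generates $K_0$ over $\QQ$ and its Galois conjugate $\sigma(\eta)$ differs from $\eta$. Consequently $V$ is exactly the $\eta$-eigenspace of $A_V$, and is therefore uniquely recoverable from the operator.

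To finish, suppose $V_i\in U_\omega(\ZZ,\eta,\sigma(\eta))$ converges to some $V\in U_\omega$. By the continuity established above, $A_{V_i}\to A_V$ in $M_{4\times 4}(\RR)$. But each $A_{V_i}$ lies in $M_{4\times 4}(\ZZ)$, which is a discrete subset, so the sequence $A_{V_i}$ is eventually constant, equal to some integer matrix $A_\infty$. Since $V_i$ is the $\eta$-eigenspace of $A_{V_i}=A_\infty$ for all large $i$, the sequence $V_i$ is also eventually constant, proving discreteness. There is no substantive obstacle here beyond ensuring $\eta\neq \sigma(\eta)$, which is forced by $K_0$ being a genuine real quadratic extension; once that is in place the argument is a routine parallel of Claim \ref{claim: discreteness of D-integral planes}.
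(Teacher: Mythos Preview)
Your proof is correct and follows exactly the approach the paper intends: the paper explicitly states that the argument is similar to that of Claim \ref{claim: discreteness of D-integral planes} and does not repeat it, and your write-up is precisely that adaptation, with the necessary additional remark that $\eta\neq\sigma(\eta)$ (since $K_0$ is a genuine quadratic extension) so that $V$ is recoverable as the $\eta$-eigenspace of $A_V$.
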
  
  The Proof of this claim is similar to the proof of Claim \ref{claim: discreteness of D-integral planes}, and we will not repeat it. 
\end{proof}

\appendix
\section{Proof Proposition \ref{prop: impossible section method1}}
\label{appendix: proof impossible section strat}
Let $M$ and $\lambda$ as in Proposition \ref{prop: impossible section method1}, and $(\SL_2(\RR).x_i)_{i=1}^\infty$ is an infinite sequence of orbits, with $\delta(\stab_{\SL_2(\RR)}(x_i))\to 1$. 
By Proposition \cite[Prop. 4.1]{solan2024critical}, for every $i$ there there is a $\ta$-invariant probability measure $\mu_i$ on $\SL_2(\RR).x_i$ such that $h_{\mu_i}(\ta(1))\ge 1 - 2(1-\delta(\stab_{\SL_2(\RR)}(x_i)))\to 1$. 
By Part \ref{part: equidistribution1}, we have that $\mu_i \to \lambda$. 
By \cite[Thm~7.6 (ii)]{einsiedler2010diagonal} the entropy coincides with the leafwise dimensions,
\begin{align}
  h_{\tilde \mu_i}(\ta(1)) = \dim^\tu(\mu_i) = \dim^{\tu^t}(\mu_i) \to 1.
\end{align}
Let $\pi:P\to M$ be the projective bundle provided by Part \ref{part: lifts to projective1}, and $\varphi_i:\SL_2(\RR).x_i\to P$ the invariant sections.
Since leafwise dimension is preserved by embeddings, we deduce that 
\begin{align}
  \dim^\tu((\varphi_i)_*\mu_i) = \dim^{\tu^t}((\varphi_i)_*\mu_i) \to 1.
\end{align}
Take a weak-$*$ partial limit $\nu_\infty$ of $((\varphi_i)_*\mu_i)_{i=1}^\infty$. 
By Solan \cite[Thm 3.9]{solan2024critical}, we deduce that $\nu_\infty$ is $\tu$ invariant and $\tu^t$-invariant. Hence it is $\SL_2(\RR)$-invariant. 
Since the projection $\pi:P\to X$ is proper, the pushforward map $\pi_*$ is continuous with respect to the weak-$*$ topology. 
Hence $\pi_*\nu_\infty = \lim_{i\to \infty} \mu_i = \lambda$ and $\nu_\infty$ is a probability measure. 
This contradicts Part \ref{part: no section1}, and derive the desired contradiction.



\bibliographystyle{plain}
\bibliography{BibErg}{}
\end{document}